\newcommand{\limd}{{\smash{\xrightarrow{~\mathrm{d}~}}}}
\newcommand{\limp}{{\smash{\xrightarrow{~\mathrm{p}~}}}}
\newcommand{\R}{\mathbb{R}}
\newcommand{\cov}{{\mathrm{cov}}}
\newcommand{\prox}{{\mathrm{prox}}}
\newcommand{\calI}{\mathcal{I}}
\newcommand{\z}{\mathsf{z}}
\newcommand{\g}{\mathsf{g}}
\newcommand{\y}{\mathsf{y}}
\newcommand{\p}{\mathsf{p}}
\renewcommand{\u}{\mathsf{u}}
\newcommand{\B}{\mathsf{B}}
\newcommand{\D}{\mathsf{D}}
\newcommand{\V}{\mathsf{V}}
\renewcommand{\H}{\mathsf{H}}
\renewcommand{\L}{\mathsf{L}}
\newcommand{\sfP}{\mathsf{P}}
\def\defas{\stackrel{\text{\tiny def}}{\coloneqq}}
\def\iid{i.i.d.\@\xspace}
\def\eg{e.g.\@\xspace}
\def\ie{i.e.\@\xspace}
   \def\MR#1{}
   \def\url#1{}
\DeclareMathOperator{\Var}{Var}
\DeclareMathOperator{\vect}{vec}
\DeclareMathOperator{\trace}{Tr}
\DeclareMathOperator{\Jac}{Jac}
\DeclareMathOperator{\rank}{rank}
\let\div\relax
\DeclareMathOperator{\div}{div}
\DeclareMathOperator*{\argmin}{arg\,min}
\def\E{\mathbb E}
\def\P{\mathbb P}
\def\N{\mathbb N}
\DeclareMathOperator{\diag}{diag}
\DeclareMathOperator{\rem}{Rem}
\DeclareBoldMathCommand\ba{a}
\theoremstyle{plain} 
\newtheorem{theorem}{Theorem}[section]
\newtheorem{lemma}[theorem]{Lemma}
\newtheorem{corollary}[theorem]{Corollary}
\theoremstyle{definition} 
\newtheorem{assumption}{Assumption}[section]
\theoremstyle{remark} 
\numberwithin{equation}{section}
\numberwithin{theorem}{section}
\Crefname{assumption}{Assumption}{Assumptions}
\title[High-Dimensional Multinomial Logistic Regression]{
    Multinomial Logistic Regression:
    Asymptotic Normality on Null Covariates in High-Dimensions
}
\author{Kai Tan}
\address[Kai Tan]{
Department of Statistics, Rutgers University, Piscataway, NJ 08854, USA.
}
\email{kai.tan@rutgers.edu}
\author{Pierre C. Bellec}
\address[Pierre C. Bellec]{
Department of Statistics, Rutgers University, Piscataway, NJ 08854, USA.
}
\email{pierre.bellec@rutgers.edu}
\begin{document}

\begin{abstract}
This paper investigates the asymptotic distribution of the maximum-likelihood estimate (MLE) in multinomial logistic models in the high-dimensional regime where dimension and sample size are of the same order. While classical large-sample theory provides asymptotic normality of the MLE under certain conditions, such classical results are expected to fail in high-dimensions as documented for the binary logistic case in the seminal work of \cite{sur2019modern}. We address this issue in classification problems with 3 or more classes, by developing asymptotic normality and asymptotic chi-square results for the multinomial logistic MLE (also known as cross-entropy minimizer) on null covariates. Our theory leads to a new methodology to test the significance of a given feature. Extensive simulation studies on synthetic data corroborate these asymptotic results and confirm the validity of proposed p-values for testing the significance of a given feature.
\end{abstract}
\maketitle
\section{Introduction} \label{sec:intro}
Multinomial logistic modeling has become a cornerstone of classification problems
in machine learning, as witnessed by the omnipresence of both the 
cross-entropy loss (multinomial logistic loss)
and the softmax function (gradient of the multinomial logistic loss)
in both applied and theoretical machine learning.
We refer to \citet{cramer2002origins} for
an account of the history and early developments of logistic modeling.

Throughout, we consider a classification problem with
$K+1$ possible labels where $K$ is a fixed constant. This paper tackles 
asymptotic distributions of multinomial logistic estimates
(or cross-entropy minimizers) in generalized
linear models with moderately high-dimensions,
where sample size $n$ and dimension $p$ have the same order, for instance
$n,p\to+\infty$ simultaneously while the ratio $p/n$ converges to a finite
constant. Throughout the paper, let $[n] =\{1, 2, \ldots, n\}$ for all $n\in \N$, and $I\{\text{statement}\}$ be the 0-1
valued indicator function, equal to 1 if statement is true and 0 otherwise
(e.g., $I\{y_i=1\}$ in the next paragraph equals 1 if $y_i=1$ holds and 0 otherwise).

\paragraph{The case of binary logistic regression.}
Let $\rho(t)=\log(1+e^t)$ be the logistic loss
and $\rho'(t)=1/(1+e^{-t})$ be
its derivative, often referred to as the sigmoid
function. In the current moderately-high dimensional regime
where $n,p\to+\infty$ with $p/n\to\kappa>0$ for some constant $\kappa$,
recent works \citep{candes2020phase,sur2019modern,zhao2022asymptotic} provide a detailed theoretical
understanding of the behavior of the logistic Maximum Likelihood Estimate (MLE)
in binary logistic regression models.
Observing independent observations $(x_i, y_i)_{i\in[n]}$  
from a logistic model defined as
$\P(y_i=1|x_i) = \rho'(x_i^T\beta)$
where $x_i \sim N(\bm{0}, n^{-1}I_p)$,
and $\lim_{n\to\infty} \|\beta\|^2/n = \gamma^2$ for a constant $\gamma$
for the limiting squared norm of the unknown regression vector $\beta$.
These works prove that the behavior of the MLE $\hat \beta
=\argmin_{b\in\R^p}\sum_{i=1}^n \rho(x_i^Tb) - I\{y_i=1\} x_i^Tb$
is summarized by the solution $(\alpha_*,\sigma*,\lambda_*)$ of the system
of three equations
\begin{equation}\label{eq:system}
    \begin{cases}
        \sigma^2 & = \frac{1}{\kappa^2} \E [2 \rho'(\gamma Z_1) (\lambda \rho'(\prox_{\lambda\rho}(-\alpha\gamma Z_1 + \sqrt{\kappa} \sigma Z_2)))^2]\\
    0 & = \E [\rho'(\gamma Z_1) \lambda \rho'(\prox_{\lambda\rho}(-\alpha\gamma Z_1 + \sqrt{\kappa} \sigma Z_2))]\\
    1 - \kappa &= \E [{2\rho'(\gamma Z_1)}\big/\bigl({1 + \lambda \rho''(\prox_{\lambda\rho}(-\alpha\gamma Z_1 + \sqrt{\kappa} \sigma Z_2))}\bigr)]
    \end{cases},
\end{equation}
where $(Z_1, Z_2)$ are \iid $N(0,1)$ random variables
and the proximal operator is defined as 
$
\prox_{\lambda\rho}(z) = \argmin_{t\in \R} \bigl\{\lambda \rho(t) + (t-z)^2/2 \bigr\}
$.
The system \eqref{eq:system} characterize, among others, the following behavior
of the MLE $\hat\beta$:
for almost any $(\gamma,\kappa)$, the system admits a solution if and only if
$\hat \beta$ exists with probability approaching one and in this case,
$\|\hat\beta\|^2/n$ and $\|\hat\beta-\beta\|^2/n$ both have finite limits
that may be expressed as simple functions of $(\alpha_*,\sigma_*,\lambda_*)$,
and for any feature $j\in[p]$ such that $\beta_j=0$ (\ie, $j$ is a null covariate), the $j$-th coordinate of the MLE satisfies
$$
\hat \beta_j \limd N(0, \sigma_*^2). 
$$
The proofs in \cite{sur2019modern} are based on approximate message passing (AMP)
techniques; we refer to \cite{berthier2020state,feng2022unifying,gerbelot2021graph} and the references therein for recent surveys and general results.
More recently, \cite{zhao2022asymptotic} extended the result of \cite{sur2019modern} from isotropic design to Gaussian covariates with an arbitrary covariance structure: if now $x_i\sim N(\bm{0},\Sigma)$ for some positive definite $\Sigma$
and $\lim_{n,p\to+\infty} \beta^T\Sigma\beta = \kappa$, null covariates
$j\in [p]$ (in the sense that $y_i$ is independent of $x_{ij}$ given $(x_{ik})_{k\in[p]\setminus \{j\}}$) of the MLE satisfy
\begin{equation}
(n/\Omega_{jj})^{1/2} \hat \beta_j \limd N(0, \sigma_*^2),
    \label{eq:binary_null_2}
\end{equation}
where $\sigma_*$ is the same solution of \eqref{eq:system} and 
$\Omega=\Sigma^{-1}$. \cite{zhao2022asymptotic} also obtained
asymptotic normality results for non-null covariates, that is,
features $j\in[p]$ such that $\beta_j\ne 0$.
The previous displays can be used to test the null hypothesis
$H_0: y_i$ is independent of $x_{ij}$ given $(x_{ik})_{k\in[p]\setminus \{j\}}$
and develop the corresponding p-values
if $\sigma_*$ is known; in this binary logistic regression model
the ProbeFrontier \citep{sur2019modern} and SLOE \cite{yadlowsky2021sloe}
give means to estimate the solutions $(\alpha_*,\sigma_*,\lambda_*)$
of system \eqref{eq:system} without the knowledge of $\gamma$.
\cite{mai2019large} studied the performance of Ridge regularized
binary logistic regression in mixture models.
\cite{salehi2019impact} extended \cite{sur2019modern} to separable
penalty functions.
\cite{bellec2022observable} derived asymptotic normality results similar
to \eqref{eq:binary_null_2}
in single-index models including binary logistic regression without resorting to the system \eqref{eq:system}, showing
that for a null covariate $j\in[p]$ in the unregularized case that
\begin{equation}
    \label{eq:binary_null_3}
(n/\Omega_{jj})^{1/2} (\hat v/\hat r) \hat \beta_j \limd N(0, {1})
\end{equation}
where $\hat v=\frac 1 n \sum_{i=1}^{n} \rho''(x_i^T\hat\beta)-\rho''(x_i^T\hat\beta)^2x_i^T[\sum_{l=1}^nx_l\rho''(x_l^T\hat\beta)x_l^T]^{-1}x_i$ is scalar
and so is $\hat r^2 = \frac 1 n \sum_{i=1}^n (I\{y_i=1\}-\rho'(x_i^T\hat\beta))^2$. In summary, in this high dimensional binary logistic model,
\begin{enumerate}
    \item The phase transition from \cite{candes2020phase} splits the $(\gamma,\kappa)$ plane into two connected components: in one component the MLE does not exist with high probability, in the other component the MLE exists and $\|\Sigma^{1/2}\hat\beta\|^2$ is bounded with high probability (boundedness is a consequence of the fact that
        $\|\Sigma^{1/2}\hat\beta\|^2$ or $\|\Sigma^{1/2}(\hat\beta-\beta)\|^2$ admit finite limits); 
    \item In the component of the $(\gamma,\kappa)$ plane where the MLE exists, for any null covariate $j\in[p]$, the asymptotic normality results
        \eqref{eq:binary_null_2}-\eqref{eq:binary_null_3} holds.
\end{enumerate}
\paragraph*{Multiclass classification.}
The goal of this paper is to develop a theory for the asymptotic normality of the multinomial logistic regression MLE (or cross-entropy minimizer) on null covariates when the number of classes, $K+1$, is greater than 2 and $n,p$ are of the same order.
In other words, we aim to generalize results such as \eqref{eq:binary_null_2} or \eqref{eq:binary_null_3} for three or more classes. 
Classification datasets with 3 or more classes are ubiquitous
in machine learning (MNIST, CIFAR to name a few), which calls for such
multiclass generalizations.
In Gaussian mixtures and logistic models, 
\cite{thrampoulidis2020theoretical} derived characterizations of the 
performance of 
of least-squares and class-averaging estimators, excluding
cross-entropy minimizers or minimizers of non-linear losses.
\cite{loureiro2021learning} extended \cite{sur2019modern,zhao2022asymptotic,salehi2019impact} to multiclass classification problems in a Gaussian mixture model, and obtained the fixed-point equations that characterize
the performance and empirical distribution of the minimizer of
the cross-entropy loss plus a convex regularizer.
    In the same vein as \cite{loureiro2021learning},
    \cite{cornacchia2022learning} studied the limiting fixed-point equations 
    in a multiclass teacher-student learning model where labels are generated by a noiseless channel with response
    $\argmin_{k\in\{1,...,K\}} x_i^T \beta_k$ where $\beta_k\in\R^p$ is unknown for each
    class $k$.
    These two aforementioned works assume a multiclass Gaussian mixture model, which is different than the normality assumption for $x_i$
    used in the present paper.
    More importantly, 
    these results cannot be readily used for the purpose testing significant covariates (cf. \eqref{H0} below) since solving the fixed-point equations
    require the knowledge of several unknown parameters, including the
    limiting spectrum of the mixture covariances and empirical distributions
    of the mixture means (cf. for instance
    Corollary 3 in \cite{loureiro2021learning}).
    In the following sections, we fill this gap with a new methodology
    to test the significance of covariates. This is made possible by
    developing new
    asymptotic normality results for cross-entropy minimizers
    that generalize \eqref{eq:binary_null_3},
    without relying on the low-dimensional fixed-point equations.

\paragraph{Notation.}
Throughout, $I_p\in\R^{p\times p}$ is the identity matrix,
for a matrix $A\in \R^{m\times n}$, $A^T$ denotes the transpose of $A$,
$A^{\dagger}$ denotes the Moore-Penrose inverse of $A$.
If $A$ is psd, $A^{1/2}$ denotes the unique symmetric square root,
i.e., the unique positive semi-definite matrix such that $(A^{1/2})^2=A$.
The symbol $\otimes$ denotes the Kronecker product of matrices.
Given two matrices $A\in\R^{n\times k},B\in\R^{n\times q}$ with the same number or rows,
$(A,B)\in\R^{n\times (k+q)}$ is the matrix obtained by stacking the columns of $A$ and $B$ horizontally. If $v\in\R^n$ is a column vector with dimension equal to the number of rows in $A$, we construct
$(A, v)\in\R^{n\times (k+1)}$ similarly.
We use $\bm{0}_n$ and $\bm{1}_n$ to denote the all-zeros vector and all-ones vector in $\R^n$, respectively; we do not bold vectors and matrices
other than $\bm 0_n$ and $\bm 1_n$. 
We may omit the subscript giving the dimension
if clear from context;
e.g., in $I_{K+1}-\frac{\bm 1\bm 1^T}{K+1}$
the vector $\bm 1$ is in $\R^{K+1}$.
The Kronecker product between two matrices is denoted by $\otimes$
and $\vect(M)\in\R^{nd}$ is the vectorization operator applied to a matrix $M\in\R^{n\times d}$.
For an integer $K\ge 2$ and $\alpha\in(0,1)$, the quantile $\chi^2_K(\alpha)$
is the unique real number satisfying
$\P(W>\chi^2_K(\alpha))=\alpha$
where $W$ has a chi-square distribution with $K$ degrees of freedom.
The symbols $\limd$ and $\limp$ denote convergence in distribution
and in probability.

Throughout, 
\emph{classical asymptotic regime} refers to the scenario where the feature dimension $p$ is fixed and the sample size $n$ goes to infinity. 
In contrast, the term \emph{high-dimensional regime} refers to the situation where $n$ and $p$ both tend to infinity with the ratio $p/n$ converging to a limit smaller than 1.

\subsection{Multinomial logistic regression}
Consider a multinomial logistic regression model with $K+1$ classes. 
We have $n$ \iid data samples $\{(x_i, \y_i)\}_{i=1}^n$, where $x_i \in \R^p$ is the feature vector and $\y_i=(\y_{i1},...,\y_{i(K+1)})^T\in\R^{K+1}$
is the response. Each response $\y_i$ is the one-hot encoding
of a single label, i.e., 
$\y_i\in\{0,1\}^{K+1}$ with $\sum_{k=1}^{K+1} \y_{ik} = 1$
such that $\y_{ik}=1$ if and only if the label for $i$-th observation is $k$. 
A commonly used generative model for $\y_i$ is 
the multinomial regression model, namely
\begin{equation}\label{model:over-specified}
\P(\y_{ik} = 1 | x_i) 
= 
\frac{\exp(x_i^T \B^* e_k)}{\sum_{k'=1}^{K+1} \exp(x_i^T\B^* e_{k'})}, \quad k\in\{1,2, \ldots, K+1\} 
\end{equation}
where $\B^*\in \R^{p\times (K+1)}$ is an unknown logistic model parameter
and $e_k\in \R^{K+1},e_{k'}\in\R^{K+1}$ are the $k$-th and $k'$-th canonical basis vectors. 
The 
MLE
for $\B^*$ in the model \eqref{model:over-specified} is any solution that minimizes the cross-entropy loss,
\begin{equation}\label{barB}
    \textstyle
	 \hat \B \in \argmin_{\B\in \R^{p\times (K+1)}} \sum_{i=1}^n \L_i (\B^T x_i),
\end{equation}
where $\L_i: \R^{K+1} \to \R$ is defined as
$\L_i (\u) = - \sum_{k=1}^{K+1} \y_{ik} \u_k + \log \sum_{k'=1}^{K+1} \exp(\u_{k'}).$
If the solution set in \eqref{barB} is non-empty,
we define for each observation $i \in [n]$ the vector
of predicted probabilities $\hat \p_i = (\hat \p_{i1}, ..., \hat \p_{i(K+1)})^T$ with 
\begin{equation}
    \label{p_i} 
\hat \p_{ik} \defas
\P(\hat\y_{ik} = 1) 
= \frac{\exp(x_i^T \hat\B e_k)}{\sum_{k'=1}^{K+1}\exp(x_i^T \hat\B e_{k'})}
\qquad 
\qquad 
\text{ for each }k\in\{1,...,K+1\}.
\end{equation}
Our results will utilize the gradient and Hessian of $\L_i$ evaluated at $\hat\B^T x_i$, denoted by
\begin{equation}\label{eq:g_bar-H_bar}
	\g_i \defas \nabla \L_i(\hat\B^T x_i) = - \y_i + \hat\p_i, \qquad
	\H_i \defas \nabla^2 \L_i(\hat\B^T x_i) = \diag(\hat\p_i) - \hat\p_i \hat\p_i^T.
\end{equation}
The quantities 
$(\hat\B, \hat \p_i, \g_i, \H_i)$ 
can be readily computed from the data $\{(x_i, \y_i)\}_{i=1}^n$. 
To be specific, the MLE $\hat \B$ in \eqref{barB} can be obtained by invoking a multinomial regression solver (\eg, \verb|sklearn.linear_model.LogisticRegression| from \cite{scikit-learn}), and the quantities $\hat \p_i, \g_i, \H_i$ can be further computed from \cref{p_i,eq:g_bar-H_bar} by a few matrix multiplications and application
of the softmax function.

\paragraph{Log-odds model and reference class.}
\label{reference-class}
The matrix $\B^*$ in \eqref{model:over-specified} is not identifiable
since the conditional distribution of $\y_i|x_i$ in the model \eqref{model:over-specified} remains unchanged if we replace columns of $\B^*$ by $\B^* - b\bm{1}_{K+1}^T$ for any $b\in\R^p$. 
In order to obtain an identifiable model, a classical and natural
remedy is to model the log-odds, here with the class $K+1$ as the reference class:
\begin{equation}\label{eq:log-odds}
    \log \frac{\P(\y_{ik}=1|x_i)}{\P(\y_{i(K+1)}=1|x_i)} = x_i^T A^* e_k, \qquad \forall k\in[K]
\end{equation}
where $e_k$ is the $k$-th canonical basis vector of $\R^K$,
and $A^* \in \R^{p\times K}$ is the unknown parameter. 
The matrix $A^*\in \R^{p\times K}$ in log-odds model \eqref{eq:log-odds} is related to $\B^*\in \R^{p\times (K+1)}$ in the model \eqref{model:over-specified} by 
$A^* = \B^* (I_K, -\bm1_{K})^T$. 
This log-odds model has two benefits: 
First 
it is identifiable since the unknown matrix $A^*$ is uniquely defined. 
Second, the matrix $A^*$ lends itself well to interpretation
as its $k$-th column represents the contrast coefficient between class $k$ and the reference class $K+1$. 

The MLE $\hat A$ of $A^*$ in \eqref{eq:log-odds} is
$\hat A = \argmin_{A\in\R^{p\times K}} \sum_{i=1}^n \L_i((A,\bm{0}_p)^T x_i)$.
If the solution set in \eqref{barB} is non-empty, $\hat A$ is related to any solution $\hat\B$ in \eqref{barB} by $\hat A = \hat \B (I_K, -\bm1_{K})^T$. Equivalently, 
\begin{equation}
\hat A_{jk} = \hat \B_{jk} - \hat \B_{j(K+1)}
\label{eq:relationship_A_B}
\end{equation}
for each $j\in[p]$ and $k\in [K]$. 

If there are three classes (\ie $K+1=3$), this parametrization allows us to draw scatter plots of realizations of $\sqrt{n}e_j^T \hat A = (\sqrt{n}\hat A_{j1}, \sqrt{n}\hat A_{j,2})$ as in \Cref{fig:scatter-plot}.

\subsection{Hypothesis testing for the $j$-th feature
and classical asymptotic normality for MLE}

\paragraph*{Hypothesis testing for the $j$-th feature.}
Our goal is to develop a methodology
to test the significance of the
$j$-th feature.
Specifically, for a desired confidence level
$(1-\alpha)\in(0,1)$ (say, $1-\alpha=0.95$) and a given feature
$j\in [p]$ of interest,
our goal is to test
\begin{equation}
    \label{H0}
    H_0: \y_i \text{ is conditionally independent of }
    x_{ij} \text{ given } (x_{ij'})_{j'\in[p]\setminus \{j\}}
    .
\end{equation}
Namely, we want to test whether the $j$-th variable is independent from the response given all other explanatory variables $(x_{ij'}, j'\in[p]\setminus\{j\})$. 
Assuming normally distributed $x_i$ and a multinomial model
as in \eqref{model:over-specified} or \eqref{eq:log-odds}, it is equivalent to test
\begin{equation}\label{eq:H01}
    H_0: e_j^T A^* = \bm 0_{K}^T
    \qquad
    \text{ versus }
    \qquad
    H_1:
    e_j^T A^* 
    \ne \bm 0_{K}^T,
\end{equation}
where $e_j\in \R^p$ is the $j$-th canonical basis vector. 

If the MLE $\hat \B$ in \eqref{barB} exists in the sense
that the solution set in \eqref{barB} is nonempty, the
conjecture that rejecting $H_0$ when
$e_j^T \hat \B$ is far from $\bm 0_{K+1}$ is a reasonable starting point.
The important question, then, is to determine a quantitative statement
for the informal ``far from $\bm 0_{K+1}$'',
similarly to \eqref{eq:binary_null_2} or \eqref{eq:binary_null_3}
in binary logistic regression.

\paragraph{Classical theory with $p$ fixed.}
If $p$ is fixed and $n\to\infty$ in model
\eqref{eq:log-odds},
classical maximum likelihood theory \citep[Chapter 5]{van1998asymptotic} 
provides the asymptotic distribution of the MLE $\hat A$, which can be further used to test \eqref{eq:H01}. 

Briefly, if $x$ has the same distribution as any $x_i$,
the MLE $\hat A$ 
in the multinomial logistic model 
is asymptotically normal 
with
$$\sqrt{n} (\vect(\hat A) - \vect(A^*)) \limd N (\bm{0}, \calI^{-1})
\quad
\text{ where }
\quad
\calI = 
\E[(xx^T) \otimes (\diag(\pi^*) - \pi^* \pi^*{}^T)]$$
is the Fisher information matrix evaluated at the true parameter $A^*$, 
vec$(\cdot)$ is the usual vectorization operator,
and $\pi^*\in\R^K$ has random entries
$$\pi^*_k = \exp(x^T A^* e_k)\big/({1 + \sum_{k'=1}^{K}\exp(x^T A^* e_{k'})})$$ for each $k\in [K]$.
In particular, under $H_0:e_j^T A^* = \bm{0}_K^T$, 
\begin{equation}\label{eq:classical-A}
    \sqrt{n} \hat A{}^T e_j \limd 
    N(\bm{0}, S_j
    )
\end{equation}
where $S_j = (e_j^T \otimes I_K) \calI^{-1} (e_j \otimes I_K) = e_j^T (\cov(x))^{-1} e_j [\mathbb{E}~ (\text{diag}(\pi^*) - \pi^* \pi^*{}^T)]^{-1}$.
When \eqref{eq:classical-A} holds, by the delta method we also have $\sqrt n
S_j^{-1/2}\hat A^T e_j \limd N(\bm{0}, I_K)$ and 
\begin{equation}\label{eq:chi2-classical}
    n \|S_j^{-1/2} \hat A^T e_j\|^2 \limd \chi^2_K.
\end{equation}
where the limiting distribution is chi-square with $K$ degrees of freedom. 
This further suggests 
the size $\alpha$ test that
rejects $H_0$ when 
$T_n^j(X, Y)> \chi^2_K(\alpha)$, where 
$T_n^j(X, Y)= 
n \|S_j^{-1/2} \hat A^T e_j\|^2$ is the test statistic.
If \eqref{eq:chi2-classical} holds, this test is guaranteed to 
have a type I error converging to $\alpha$. 
The p-value of this test is given by
\begin{equation}
    \label{eq:classical-p-value}
    \textstyle
    \int_{T_n^j(X, Y)}^{+\infty} f_{\chi^2_K}(t)dt,
\end{equation}
where $f_{\chi^2_K}(\cdot)$ is the density of the chi-square distribution with $K$ degrees of freedom. 

As discussed in the introduction, \cite{sur2019modern}
showed that in binary logistic regression,
classical normality results for the MLE such as \eqref{eq:classical-A} fail in the high-dimensional regime because the variance in \eqref{eq:classical-A} 
underestimates the variability of the MLE even for null covariates;
see also the discussion surrounding \eqref{eq:binary_null_2}.
Our goal is to develop, for classification problems with $K+1\ge 3$ classes,
a theory that correctly characterize the asymptotic distribution
of  $\hat A^Te_j$ for a null covariate $j\in[p]$ in the high-dimensional regime.

\begin{figure}[b]
    \centering
    \begin{subfigure}[b]{0.32\textwidth}
        \centering
        \includegraphics[width=\textwidth]{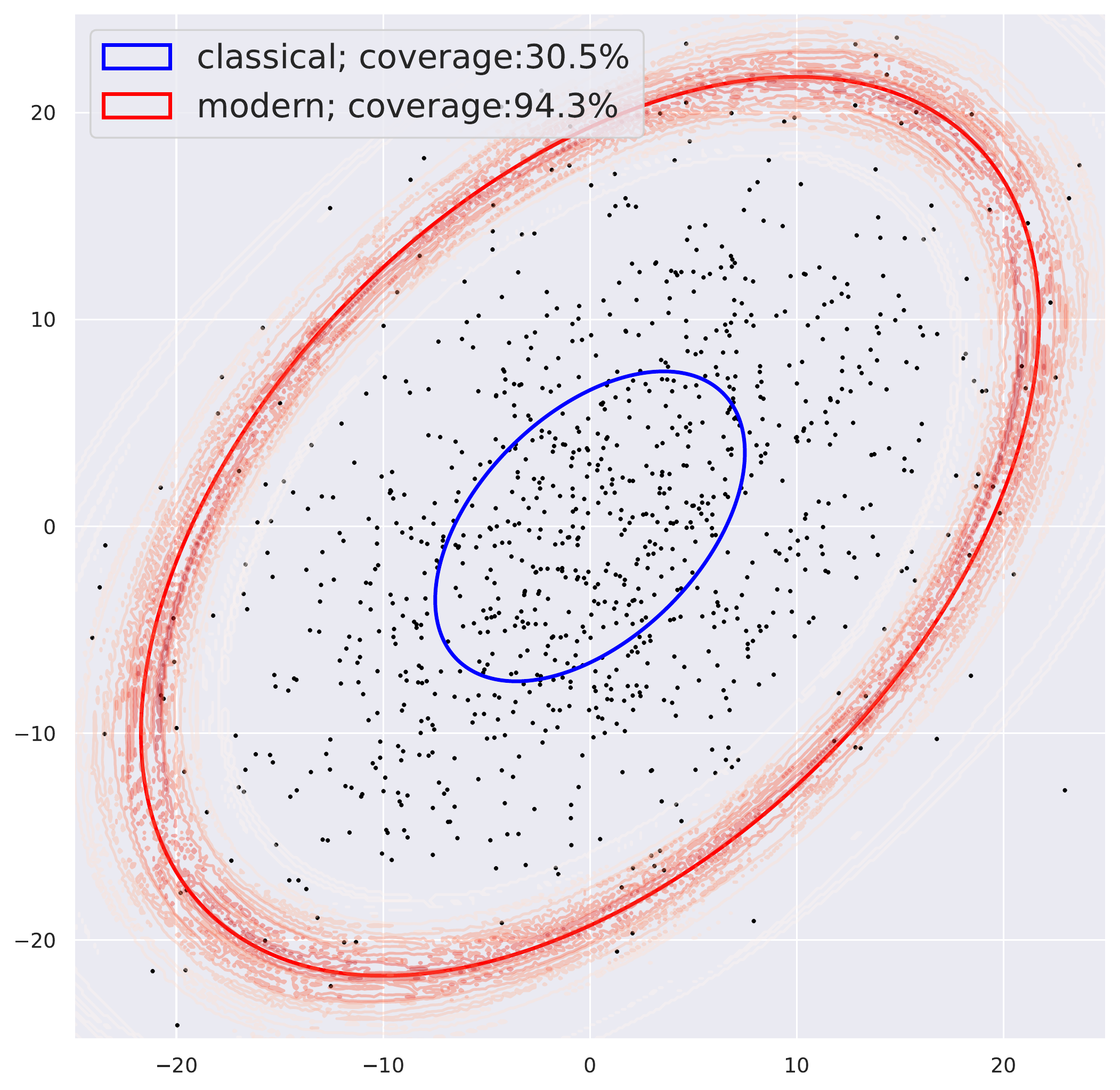}
        \caption{\footnotesize $(n,p)=(2000,600)$}
    \end{subfigure} 
    \hfill
    \begin{subfigure}[b]{0.32\textwidth}
        \centering
        \includegraphics[width=\textwidth]{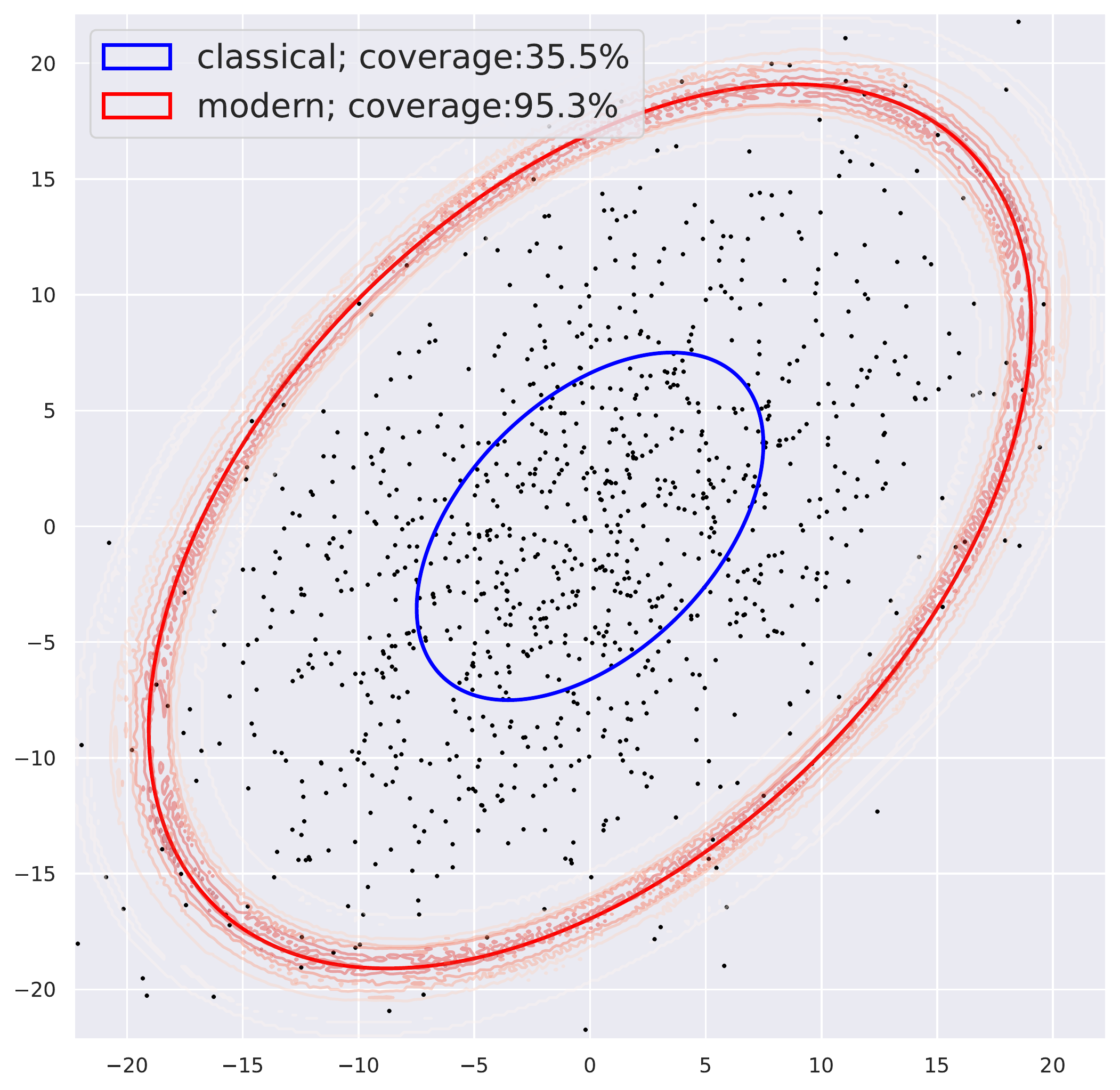}
        \caption{\footnotesize $(n,p)=(3500,1000)$}
    \end{subfigure}
    \hfill 
    \begin{subfigure}[b]{0.32\textwidth}
        \centering
        \includegraphics[width=\textwidth]{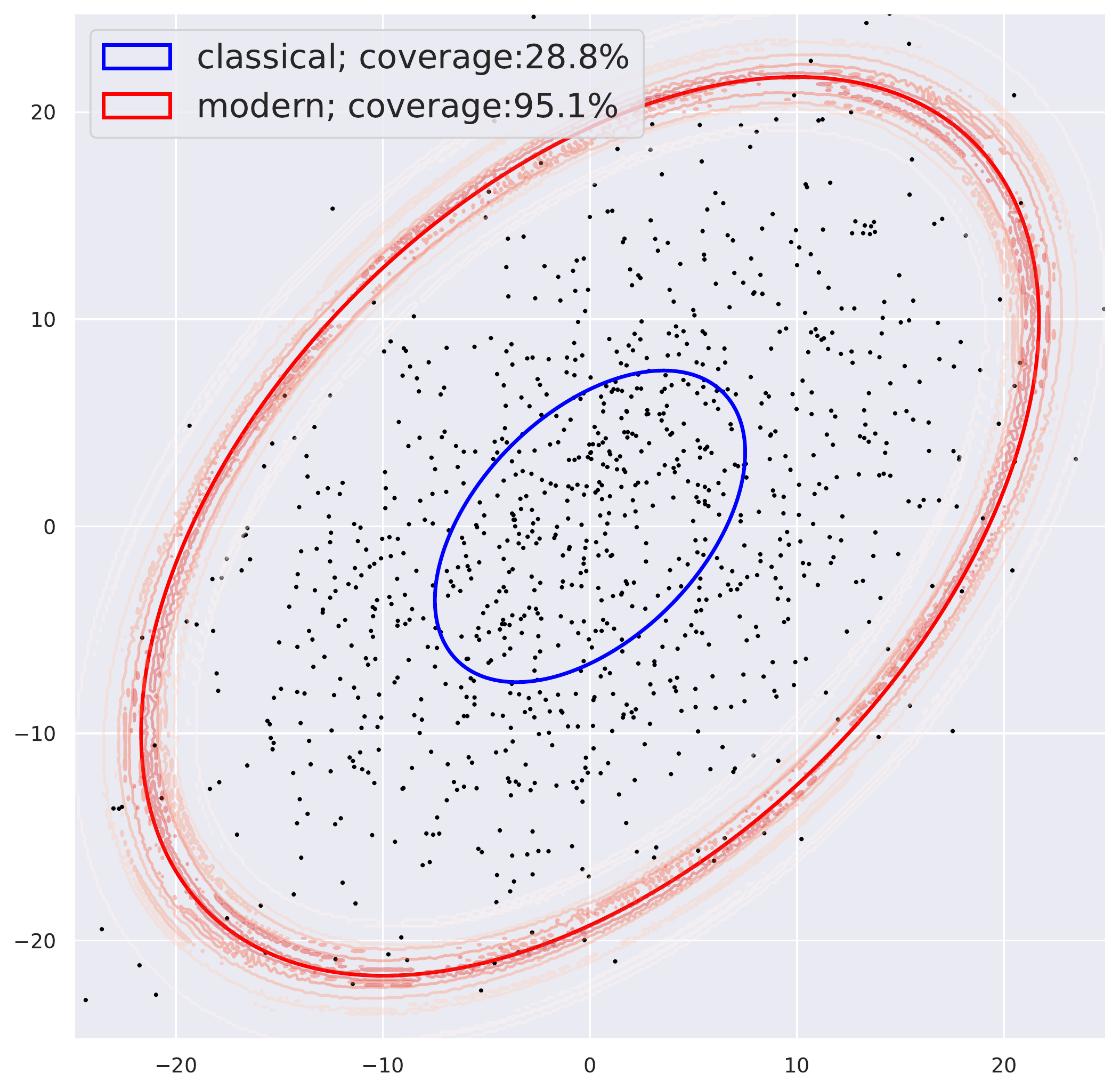}
        \caption{\footnotesize $(n,p)=(5000,1500)$}
    \end{subfigure}
    \caption{Scatter plot of pairs $(\sqrt{n}\hat A_{j1}, \sqrt{n}\hat A_{j2})$ with $K=2$ over 1000 repetitions. 
        The blue ellipsoid is the boundary of the $95\%$ confidence set for $\sqrt{n} \hat A^T e_j$ under $H_0$ from the classical MLE theory \eqref{eq:classical-A}-\eqref{eq:chi2-classical} based on the Fisher information, the dashed red ellipsoids are the boundaries of the $95\%$ confidence set for $\sqrt{n} \hat A^T e_j$ under $H_0$ from this paper (cf. \eqref{eq:chi2-A} below). Each of the 1000 repetition gives a slightly different dashed ellipsoid. The solid red ellipsoid is the average of these 1000 dashed ellipsoids.
    Each row of $X$ is \iid sampled from $N(\bm{0}, \Sigma)$ with $\Sigma=(0.5^{|i-j|})_{p\times p}$. 
    The first $\lceil p/4\rceil$ rows of $A^*$ are \iid sampled from $N(\bm{0},I_K)$ while other rows are set to zeros. 
    We further normalize $A^*$ such that 
    $A^*{}^T\Sigma A^* = I_K$.
    The last coordinate $j=p$ is used as the null coordinate.
    }
    \label{fig:scatter-plot}
\end{figure}
We present first some motivating simulations that demonstrate the failure of classical normal approximation \eqref{eq:classical-A} in finite samples. 
These simulations are conducted for various configurations of $(n, p)$ with $K+1=3$ classes.
We fix the true parameter $A^*$ and obtain 1000 realizations of $(\hat A_{j1}, \hat A_{j2})$ by independently resampling the data $\{(x_i, \y_i)\}_{i=1}^n$ 1000 times.
If the result \eqref{eq:classical-A} holds, then 
$\P(\sqrt{n} \hat A^T e_j \in \mathcal C_{\alpha}^j) \to 1-\alpha$, where 
$\mathcal C_{\alpha}^j = \{u\in \R^K: \|S_j^{-1/2} u\| \le \chi^2_K(\alpha)\}$.
\Cref{fig:scatter-plot} displays scatter plots of $\sqrt{n}(\hat A_{j1}, \hat A_{j2})$ along with the boundary of 95\% confidence set $\mathcal C_{\alpha}^j$ with $\alpha = 0.05$. 
We observe that, across the three different configurations of $(n,p)$, the 95\% confidence sets from our theory (\Cref{thm:normal-A} presented in next section) cover around 95\% of the realizations, while the set $\mathcal C_{\alpha}^j$ from classical theory only covers approximately $30\%$ of the points, which is significantly lower than the desired coverage rate of $95\%$. 
Intuitively and
by analogy with results in binary classification \citep{sur2019modern},
this is because the classical theory \eqref{eq:classical-A} underestimates the variation of the MLE in the high-dimensional regime.
Motivated by this failure of classical MLE theory and the results in binary classification \cite[among others]{sur2019modern},
the goal of this paper is to
develop a theory for multinomial logistic regression that achieves the following objectives:
\begin{itemize}
\item Establish asymptotic normality of the multinomial MLE $\hat A^Te_j$ for null covariates as $n,p\to+\infty$ simultaneously with a finite limit for $n/p$.
\item Develop a valid methodology for hypothesis testing of \eqref{H0} in this regime, i.e., testing for the presence of an effect of a feature $j\in[p]$
    on the multiclass response.
\end{itemize}

The contribution of this paper 
is two-fold: 
(i) For a null covariate $j\in[p]$, we establish asymptotic normality results 
for $\hat A^T e_j$ that are valid in the high-dimensional regime
where $n$ and $p$ have the same order;
(ii) we propose a user-friendly test for assessing the significance of a feature in multiclass classification problems.

\section{Main result: asymptotic normality of $\hat\B{}^Te_j$ and $\hat A^Te_j$ on null covariates}
In this section, we present the main theoretical results of our work and discuss their significance. We work under the following assumptions. 
\begin{assumption}\label{assu:X}
    For constants $\delta>1$,
    assume that $n,p\to \infty$ with
    $p/n \le \delta^{-1}$,
    and that the design matrix $X\in\R^{n\times p}$ has $n$ \iid rows $(x_i)_{i\in[n]} \sim N(\bm{0}, \Sigma)$ for some invertible $\Sigma\in\R^{p\times p}$.
    The observations $(x_i,\y_i)_{i\in [n]}$ are \iid and 
    each $\y_i$ is of the form $\y_i=f(U_i,x_i^T\B^*)$ for some deterministic 
    function $f$, deterministic matrix $\B^*\in\R^{p\times (K+1)}$ such that $\B^*\bm 1_{K+1} = \bm{0}_p$,
    and latent random variable $U_i$ independent of $x_i$. 
\end{assumption}
\begin{assumption}[One-hot encoding]\label{assu:one_hot}
    The response matrix $Y$ is in $\R^{n\times (K+1)}$.
        Its $i$-th row $\y_i$ is a one-hot encoded vector, that is,
        valued in $\{0,1\}^{K+1}$
        with $\sum_{k=1}^{K+1}\y_{ik} = 1$ for each $i\in[n]$.
\end{assumption}
The model $\y_i=f(U_i,x_i^T\B^*)$ for some deterministic $f$ and $\B^*$
and latent random variable $U_i$
in \Cref{assu:X} is more general than a specific generative model such as
the multinomial logistic conditional probabilities in \eqref{model:over-specified}, as broad choices for $f$ are allowed. In words, the model
$\y_i=f(U_i,x_i^T\B^*)$ with $\B^*\bm1_{K+1}=0_p$ means that $\y_i$ only depends on $x_i$
through a $K$ dimensional projection of $x_i$ (the projection on the row-space of $\B^*$). The assumption $p/n\le \delta^{-1}$ is more general
than assuming a fixed limit for the ratio $p/n$; this allows us to cover
low-dimensional settings satisfying $p/n\to 0$ as well.

The following assumption requires the labels to be ``balanced'':
we observe each class at least $\gamma n$ times for some constant $\gamma>0$.
If $(\y_i)_{i\in[n]}$ are \iid as in \Cref{assu:X}
with distribution independent of $n,p$,
by the law of large numbers this assumption is equivalent
to $\min_{k\in[K+1]} \P(\y_{ik}=1)>0$.
\begin{assumption}\label{assu:Y}
    There exits a constant $\gamma\in (0,\tfrac{1}{K+1}]$, such that for each $k\in [K+1]$,
    with probability approaching one 
    at least $\gamma n$ observations  
    $i\in[n]$ are such that $\y_{ik}=1$. 
    In other words, 
    $\P(\sum_{i=1}^n I(\y_{ik} = 1) \ge \gamma n)\to 1$ for each $k\in [K+1]$.
\end{assumption}

As discussed in item list (i) on page 2, in binary logistic regression,
\cite{candes2020phase,sur2019modern} show that the plane
$(\frac pn, \|\Sigma^{1/2}\beta^*\|)$ is split by a smooth curve into two connected open
components: in one component the MLE does not exist with high probability,
while in the other component,
with high probability the MLE exists and is bounded in the sense
that $\|\Sigma^{1/2}\hat\beta\|^2<\tau'$ or equivalently
$\frac1n\|X\hat\beta\|^2<\tau$ for constants $\tau,\tau'$ independent 
of $n,p$. The next assumption requires the typical situation of the
latter component, in the current multiclass setting: $\hat\B$
in \eqref{barB} exists in the sense that the minimization problem
has solutions, and at least one solution is bounded.

\begin{assumption}\label{assu:MLE}
    Assume $\P(\hat \B \text{~exists} \text{ and } \|X \hat \B(I_{K+1} - \tfrac{\bm{1}\bm{1}^T}{K+1})\|_F^2 \le n \tau) \to 1$ as $n,p \to +\infty$ for some large enough constant $\tau$. 
\end{assumption}
Note that the validity of \Cref{assu:MLE} can be assessed using the data at hand;
if a multinomial regression solver (e.g. \verb|sklearn.linear_model.LogisticRegression|) converges and $\tfrac 1n\|X \hat \B(I_{K+1} - \tfrac{\bm{1}\bm{1}^T}{K+1})\|_F^2$ is no larger than a predetermined large constant $\tau$,
then we know \Cref{assu:MLE} holds. Otherwise the algorithm does not converge or produces an unbounded estimate: we know \Cref{assu:MLE} fails to hold and 
we need collect more data.

Our first main result, \Cref{thm:normal-K+1}, provides the asymptotic distribution of $\hat\B^T e_j$ where $j\in[p]$ is a null covariate,
where $\hat\B$ is any minimizer $\hat\B$ of \eqref{barB}. 
Throughout, we denote by $\Omega$ the precision matrix
defined as $\Omega=\Sigma^{-1}$.

\begin{restatable}{theorem}{mytheorem}
    \label{thm:normal-K+1}
    Let \Cref{assu:X,assu:one_hot,assu:Y,assu:MLE} be fulfilled. 
    Then for any $j\in[p]$ such that $H_0$ in \eqref{H0} holds, and 
    any minimizer $\hat\B$ of \eqref{barB}, we have
    \begin{equation}\label{eq:normal_K+1}
    \underbrace{
        \vphantom{\sum_{i=1}^n}
        \sqrt{\frac{n}{\Omega_{jj}}}
    }_{\text{scalar}}
    \Bigl(
    \underbrace{
    \Bigl(
    \frac1n\sum_{i=1}^n (\y_i - \hat\p_i)(\y_i - \hat\p_i)^T
    \Bigr)^{1/2}
    }_{\text{square root pseudo-inverse }\R^{(K+1)\times (K+1)}}
    \Bigr)^\dagger
    \underbrace{
    \Bigl(\frac1n\sum_{i=1}^n \V_i\Bigr)
    }_{\R^{(K+1)\times(K+1)}}
    \underbrace{
        \vphantom{\sum_{i=1}^n}
        \hat\B^T e_j
    }_{\R^{K+1}}
    \limd  N\Bigl(\bm{0},
    \underbrace{
        \vphantom{\sum_{i=1}^n}
        I_{K+1}-\tfrac{\bm{1}\bm{1}^T}{K+1}
    }_{\text{cov. }\R^{(K+1)\times(K+1)}}
    \Bigr), 
    \end{equation}
    where 
    $\V_i=
    \H_i - (\H_i \otimes x_i^T)[\sum_{l=1}^n \H_l\otimes (x_lx_l^T)]^{\dagger}(\H_i\otimes x_i)$. 
\end{restatable}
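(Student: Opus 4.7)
The plan is to condition on $(X_{-j},Y)$ and view $\hat\B$ as a Lipschitz functional of a Gaussian input. Under \Cref{assu:X}, the $j$-th design column decomposes as $Xe_j = X_{-j}c + z$ with $c=\Sigma_{-j,-j}^{-1}\Sigma_{-j,j}\in\R^{p-1}$ and $z\in\R^n$ having \iid $N(0,\Omega_{jj}^{-1})$ entries independent of $X_{-j}$; under $H_0$, $Y$ is conditionally independent of $z$ as well. The KKT stationarity condition for $\hat\B$ reads $X^T(\hat P-Y)=X^TG=\bm 0$ in $\R^{p\times(K+1)}$, where $G$ has rows $\g_i^T$. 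Its rows indexed by $[p]\setminus\{j\}$ give $X_{-j}^TG=\bm 0$, and subtracting $c^T$ times this identity from the $j$-th row $(Xe_j)^TG=\bm 0$ yields the exact cornerstone identity
\begin{equation*}
z^TG \;=\; \sum_{i=1}^n z_i\,\g_i \;=\; \bm 0_{K+1}.
\end{equation*}
This is the multinomial analogue of the identity used in the binary case in \cite{bellec2022observable}, and it will be combined with Gaussian integration by parts with respect to $z$ to produce the desired asymptotic expansion.

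Next I would carry out implicit differentiation of the KKT system to compute $\partial_{z_i}\hat\B$. With $b=\vect(\hat\B)$ (column stacking), stationarity reads $\sum_l(I_{K+1}\otimes x_l)\g_l=\bm 0$, and a directional derivative with respect to the perturbation $x_{ij}\mapsto x_{ij}+t$ gives
\begin{equation*}
M\,\vect(\partial_{z_i}\hat\B) \;=\; -\bigl[(\H_i\hat\B^Te_j)\otimes x_i + \g_i\otimes e_j\bigr] \pmod{\ker M},
\end{equation*}
with $M=\sum_l\H_l\otimes x_lx_l^T$ the Hessian of the cross-entropy objective in $b$. The kernel $\ker M=\{\bm 1_{K+1}\otimes v:v\in\R^p\}$ encodes the shift invariance $\hat\B\mapsto\hat\B+b\bm 1_{K+1}^T$ and forces the use of the Moore-Penrose pseudo-inverse $M^\dagger$. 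A short Kronecker-algebra computation (using $\H_i(I_{K+1}\otimes x_i^T)=\H_i\otimes x_i^T$) then yields the key identity
\begin{equation*}
\partial_{z_i}\g_i \;=\; \V_i\,\hat\B^Te_j \;-\; (\H_i\otimes x_i^T)\,M^\dagger\,(\g_i\otimes e_j),
\end{equation*}
in which the theorem's matrix $\V_i$ arises naturally as the Schur-complement ``effective Hessian'' after eliminating the $\hat\B$-sensitivity to $z_i$.

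The heart of the argument is then to combine the exact identity $\sum_i z_i\g_i=\bm 0$ with a Gaussian integration-by-parts and concentration. Schematically, Taylor-expanding $\g_i(z)=\g_i^{(-i)}+z_i\,\partial_{z_i}\g_i+\ldots$ around a leave-one-out version $\g_i^{(-i)}$ independent of $z_i$, multiplying by $z_i$ and summing, and using the concentration $\tfrac1n\sum_i z_i^2\limp\Omega_{jj}^{-1}$ together with a Sherman-Morrison-type control of the $(\H_i\otimes x_i^T)M^\dagger(\g_i\otimes e_j)$-term (absorbed into the definition of $\g_i^{(-i)}$), should yield
\begin{equation*}
\Bigl(\tfrac1n\sum_{i=1}^n\V_i\Bigr)\hat\B^Te_j \;=\; -\tfrac{\Omega_{jj}}{n}\sum_{i=1}^n z_i\,\g_i^{(-i)} \;+\; o_P(n^{-1/2}) \pmod{\mathrm{span}(\bm 1_{K+1})}.
\end{equation*}
Conditionally on $(X_{-j},Y)$, the right-hand side is a sum of independent mean-zero vectors in $\R^{K+1}$, so a conditional CLT gives $n^{-1/2}\sum_i z_i\g_i^{(-i)}\limd N(\bm 0,\Omega_{jj}^{-1}\Sigma_G)$ with $\Sigma_G=\lim_n\tfrac1n\sum_i(\y_i-\hat\p_i)(\y_i-\hat\p_i)^T$. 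Multiplying by $\sqrt{n/\Omega_{jj}}$ and whitening by $(\Sigma_G^{1/2})^\dagger$ produces the stated limit; the degenerate covariance $I_{K+1}-\bm 1\bm 1^T/(K+1)$ appears because each $\g_i\perp\bm 1_{K+1}$ (as $\sum_k\g_{ik}=0$), forcing $\mathrm{range}(\Sigma_G)\subseteq\bm 1_{K+1}^\perp$ and hence $(\Sigma_G^{1/2})^\dagger\Sigma_G(\Sigma_G^{1/2})^\dagger=I_{K+1}-\bm 1\bm 1^T/(K+1)$.

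The main obstacle will be quantitative control of the errors in the Stein expansion. Two steps stand out: (i) justifying the leave-one-out replacement $\g_i\approx\g_i^{(-i)}$ and the absorption of the $(\H_i\otimes x_i^T)M^\dagger(\g_i\otimes e_j)$-term to the required accuracy $o_P(n^{-1/2})$, which should combine a Sherman-Morrison identity on $M$ with the norm bound $\tfrac1n\|X\hat\B(I_{K+1}-\bm 1\bm 1^T/(K+1))\|_F^2=O_P(1)$ guaranteed by \Cref{assu:MLE} to bound the sensitivity of $\hat\B$ to individual observations; and (ii) carefully tracking the kernel $\ker M=\{\bm 1_{K+1}\otimes v:v\in\R^p\}$ throughout the argument, since the pseudo-inverses appearing in both $\V_i$ and the limiting covariance all stem from this shift degeneracy, and any careless handling with ordinary inverses would break the identification.
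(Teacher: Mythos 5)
Your setup correctly recovers several of the paper's ingredients: the KKT orthogonality $\sum_{i=1}^n z_i\g_i=\bm 0$ along the residualized $j$-th column, the implicit differentiation giving $\partial_{z_i}\g_i=\V_i\hat\B^Te_j-(\H_i\otimes x_i^T)M^\dagger(\g_i\otimes e_j)$ (this is exactly \Cref{lem:dot-g}, transported to the unidentifiable parametrization), and the role of $\ker M=\{\bm 1_{K+1}\otimes v: v\in\R^p\}$. However, the probabilistic core of your argument has two genuine gaps. First, your error control is confined to a single Gaussian direction: having conditioned on $(X_{-j},Y)$, the only fresh randomness is the one column $z$, and the Stein/Taylor remainder you must kill is of order $\sum_{i=1}^n\E\|\partial_{z_i}\bigl(G(G^TG)^{-1/2}\bigr)\|_F^2$ for that one column. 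The Lipschitz property of $Z\mapsto G(G^TG)^{-1/2}$ (Jacobian operator norm $\lesssim n^{-1/2}$) only yields $\|J\|_F^2\le \rank(J)\,\|J\|_{op}^2=O(1)$ for the \emph{full} Jacobian; nothing prevents this $O(1)$ mass from concentrating on the single column $j$, in which case your remainder does not vanish. The paper escapes this by rotating so that the response depends only on the last $K$ coordinates of the design, observing that the first $p-K$ columns of the rotated design are exchangeable and independent of $Y$, and averaging the second-order Stein bound over all $p-K$ null directions to obtain a rate $O(1/(p-K))$ for each individual one (proof of \Cref{thm:general_identity}). Your conditional framing forecloses this device — there is no ensemble of exchangeable null directions left to average over — so you would need a substantially harder single-column leave-one-out/resolvent analysis, which is not sketched.

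Second, your closing ``conditional CLT'' is not valid as stated: the summands $z_i\,\g_i^{(-i)}$ are not independent conditionally on $(X_{-j},Y)$, because $\g_i^{(-i)}$ (the estimate recomputed with $z_i$ zeroed out) still depends on $(z_l)_{l\ne i}$, so $\sum_i z_i\g_i^{(-i)}$ is a sum over a dependent array. The paper instead compares $z^TF(z)$ with $z^TF(\check z)$ for an independent copy $\check z$ of the \emph{entire} column (\Cref{lem:chi2}, a second-order Stein identity), so that the limiting object $\xi=-H(Y,\check Z^1)^T\z_1$ is exactly conditionally Gaussian with covariance $H^TH=I_K$ on the good event; no CLT for dependent summands is ever invoked. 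Two smaller points: the whitening step requires nondegeneracy of $\frac1n\sum_i\g_i\g_i^T$ on $\bm 1_{K+1}^\perp$, which is where \Cref{assu:Y} enters (via \Cref{lem:gradient>0}) and which you do not address; and you should not posit a limit $\Sigma_G=\lim_n\frac1n\sum_i(\y_i-\hat\p_i)(\y_i-\hat\p_i)^T$ — the statement is built to be pivotal precisely so that no such limit needs to exist.
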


The proof of \Cref{thm:normal-K+1} is given in Supplementary \Cref{proof:K+1}. 
\Cref{thm:normal-K+1} establishes that under $H_0$, $\hat\B^T e_j$ converges to a singular multivariate Gaussian distribution in $\R^{K+1}$.
In \eqref{eq:normal_K+1}, the two matrices
$\frac1n\sum_{i=1}^n (\y_i - \hat\p_i)(\y_i - \hat\p_i)^T$
and $\frac1n\sum_{i=1}^n\V_i$ are symmetric with kernel being
the linear span of $\bm1_{K+1}$, and similarly, if a solution exists,
we may replace $\hat\B$ by $\hat\B(I_{K+1}-\frac{\bm1\bm1^T}{K+1})$ which
is also solution in \eqref{barB}. In this case, all matrix-matrix
and matrix-vector multiplications, matrix square root and pseudo-inverse in \eqref{eq:normal_K+1} happen
with row-space and column space contained in the orthogonal component of $\bm
1_{K+1}$, so that the limiting Gaussian distribution in $\R^{K+1}$ is also
supported on this $K$-dimensional subspace.

Since the distribution of the left-hand side of \eqref{eq:normal_K+1}
is asymptotically pivotal for all null covariates $j\in[p]$,
\Cref{thm:normal-K+1} opens the door of statistical inference for multinomial logistic regression in high-dimensional settings. 
By construction, the multinomial logistic estimate $\hat A\in\R^{p\times K}$ in \eqref{eq:relationship_A_B} ensures 
$(\hat A,\bm{0}_p)$ is a minimizer of \eqref{barB}. 
Therefore, we can 
deduce the following theorem from \Cref{thm:normal-K+1}. 

\begin{restatable}{theorem}{mytheoremA}
    \label{thm:normal-A}
    Define the matrix $R = (I_K, \bm 0_K)^T\in \R^{(K+1)\times K}$ using block matrix notation.
    Let \Cref{assu:X,assu:one_hot,assu:Y,assu:MLE} be fulfilled.   
    For $\hat A$ in \eqref{eq:relationship_A_B} and any $j\in[p]$
    such that $H_0$ in \eqref{H0} holds,
    \begin{equation}\label{normal-A}
        \underbrace{\vphantom{\sum_{i=1}^n}
        \Bigl(I_K + \frac{\bm{1}_K\bm{1}_K^T}{\sqrt{K+1} + 1}\Bigr)R^T
        }_{\text{matrix }\R^{K\times(K+1)}}
        \underbrace{\vphantom{\sum_{i=1}^n}
            \sqrt{\frac{n}{\Omega_{jj}}}
        }_{\text{scalar}}
        \Bigl(
        \underbrace{
        \Bigl(
        \frac1n\sum_{i=1}^n \g_i \g_i^T 
        \Bigr)^{1/2}
        }_{\text{matrix }\R^{(K+1)\times(K+1)}}
        \Bigr)^\dagger
        \underbrace{\vphantom{\sum_{i=1}^n}
            \Bigl(\frac1n\sum_{i=1}^n \V_i
            R\Bigr)
        }_{\R^{(K+1)\times K}}
        \underbrace{\vphantom{\sum_{i=1}^n}
            \hat A^T e_j
        }_{\R^K}
        \limd  N(\bm{0}_K, I_K)  
    \end{equation}
    where $\g_i$ is defined in \eqref{eq:g_bar-H_bar}
    and $\V_i$ is defined in \Cref{thm:normal-K+1}.
    Furthermore, for the same $j\in[p]$,
        \begin{equation}\label{eq:chi2-A}
    \mathcal T_n^j (X, Y) \defas 
    \frac{n}{\Omega_{jj}}
    \Bigl\|
            \Bigl(
            \Bigl(
            \frac1n\sum_{i=1}^n \g_i \g_i^T 
            \Bigr)^{1/2} \Bigr)^\dagger
            \Bigl(\frac1n\sum_{i=1}^n \V_i\Bigr)
            R \hat A^T e_j
    \Bigr\|^2
    ~\text{ satisfies }~
    \mathcal T_n^j (X, Y)
            \limd \chi_K^2. 
        \end{equation}
\end{restatable}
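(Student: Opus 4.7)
The plan is to derive Theorem \ref{thm:normal-A} as a direct consequence of Theorem \ref{thm:normal-K+1} by applying it to the specific minimizer $\hat\B = (\hat A, \bm{0}_p)$ of \eqref{barB}; this choice is a legitimate minimizer by construction of $\hat A$ in \eqref{eq:relationship_A_B}. For this specific $\hat\B$, the identity $\hat\B^T e_j = R \hat A^T e_j$ holds trivially since the last coordinate of $\hat\B^T e_j$ vanishes, and $(\y_i - \hat\p_i)(\y_i - \hat\p_i)^T = \g_i \g_i^T$ because $\g_i = -\y_i + \hat\p_i$. Substituting into \eqref{eq:normal_K+1} yields, in $\R^{K+1}$,
\[
\sqrt{n/\Omega_{jj}} \Bigl(\bigl(\tfrac1n \textstyle\sum_{i=1}^n \g_i \g_i^T\bigr)^{1/2}\Bigr)^\dagger \bigl(\tfrac1n \textstyle\sum_{i=1}^n \V_i\bigr) R \hat A^T e_j \limd N\bigl(\bm{0}, I_{K+1} - \tfrac{\bm{1}\bm{1}^T}{K+1}\bigr).
\]

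To obtain \eqref{normal-A}, I multiply both sides by $C R^T$, where $C \defas I_K + \tfrac{1}{\sqrt{K+1}+1} \bm{1}_K \bm{1}_K^T$ is the leftmost factor of the LHS of \eqref{normal-A}. The LHS then matches the LHS of \eqref{normal-A} exactly (since $(\tfrac1n\sum_i \V_i)R = \tfrac1n\sum_i \V_i R$), and the limiting covariance becomes $C R^T \bigl(I_{K+1} - \tfrac{\bm{1}\bm{1}^T}{K+1}\bigr) R C$. Using $R^T R = I_K$ and $R^T \bm{1}_{K+1} = \bm{1}_K$, the inner factor simplifies to $B \defas I_K - \tfrac{1}{K+1} \bm{1}_K \bm{1}_K^T$, and a direct computation using $(\bm{1}_K \bm{1}_K^T)^2 = K \bm{1}_K \bm{1}_K^T$ confirms the rank-one identity $C B C = I_K$; this is precisely what forces the constant $\tfrac{1}{\sqrt{K+1}+1}$.

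For \eqref{eq:chi2-A}, the strategy is to show $\mathcal T_n^j(X, Y)$ is exactly the squared Euclidean norm of the LHS of \eqref{normal-A}, so that convergence to $\chi^2_K$ is immediate from the continuous mapping theorem. Let $v \defas \bigl((\tfrac1n \sum_{i=1}^n \g_i \g_i^T)^{1/2}\bigr)^\dagger (\tfrac1n \sum_{i=1}^n \V_i) R \hat A^T e_j$. Since $\bm{1}_{K+1}^T \g_i = -1 + 1 = 0$, the symmetric matrix $\tfrac1n \sum_{i=1}^n \g_i \g_i^T$ annihilates $\bm{1}_{K+1}$, and hence so does its symmetric square root; the range of the corresponding pseudo-inverse therefore lies in $\bm{1}_{K+1}^\perp$, so $v \in \bm{1}_{K+1}^\perp$. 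For any such $v$, one has $v_{K+1} = -\bm{1}_K^T R^T v$, which gives $\|v\|^2 = (R^T v)^T (I_K + \bm{1}_K \bm{1}_K^T)(R^T v)$. A second short computation yields $C^2 = I_K + \bm{1}_K \bm{1}_K^T$ (equivalent to $\tfrac{2}{\sqrt{K+1}+1} + \tfrac{K}{(\sqrt{K+1}+1)^2} = 1$). Combining, $\|v\|^2 = \|C R^T v\|^2$, so $\mathcal T_n^j(X, Y) = (n/\Omega_{jj}) \|C R^T v\|^2$ equals the squared norm of the LHS of \eqref{normal-A}, and \eqref{eq:chi2-A} follows.

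The only real obstacle is algebraic bookkeeping, namely the two rank-one-perturbation identities $C B C = I_K$ and $C^2 = I_K + \bm{1}_K \bm{1}_K^T$, together with confirming that $v$ lies in $\bm{1}_{K+1}^\perp$ via the kernels of $\g_i\g_i^T$. No additional probabilistic content beyond Theorem \ref{thm:normal-K+1} is needed.
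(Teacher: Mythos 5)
Your proposal is correct and follows essentially the same route as the paper: apply \Cref{thm:normal-K+1} to the particular minimizer $\hat A R^T$ of \eqref{barB} (so that $\hat\B^T e_j = R\hat A^T e_j$), then left-multiply by $\bigl(R^T(I_{K+1}-\tfrac{\bm 1\bm 1^T}{K+1})R\bigr)^{-1/2}R^T = \bigl(I_K+\tfrac{\bm 1_K\bm 1_K^T}{\sqrt{K+1}+1}\bigr)R^T$ to whiten the singular limiting covariance. Your explicit verifications of $CBC=I_K$, $C^2=I_K+\bm 1_K\bm 1_K^T$, and the identity $\|v\|^2=\|CR^Tv\|^2$ for $v\perp\bm 1_{K+1}$ are correct and simply spell out details the paper leaves implicit.
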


\Cref{thm:normal-A} is proved in Supplementary \Cref{proof:A}. 
To the best of our knowledge, \Cref{thm:normal-A} is the first result that characterizes the distribution of null MLE coordinate $\hat A^T e_j$ in high-dimensional multinomial logistic regression with 3 or more classes. 
It is worth mentioning that the quantities $(\g_i, \V_i,\hat A)$ used in \Cref{thm:normal-A} can be readily computed from the data $(X, Y)$. 
Therefore, \Cref{thm:normal-A} lets us test the significance of a specific feature:
for testing $H_0$, 
this theorem suggests the
test statistic
$\mathcal T_n^j(X,Y)$ in 
\eqref{eq:chi2-A}
and the rejection region 
$
\mathcal E_\alpha^j
\defas
\bigl\{
(X, Y): \mathcal T_n^j (X, Y) 
\ge \chi^2_K(\alpha)
\bigr\}.
$
Under the null hypothesis $H_0$ in \eqref{H0}, \Cref{thm:normal-A} guarantees 
$
\P\big((X, Y) \in \mathcal E_\alpha^j
\big) \to \alpha. 
$
In other words, 
the test that rejects $H_0$ if $(X,Y)\in\mathcal E_\alpha^j$ has 
type I error converging to $\alpha$.
The p-value of this test is
\begin{equation}
    \textstyle
    \label{eq:modern_p_value}
    \text{p-value} = 
    \int_{\mathcal T_n^j(X, Y)}^{+\infty} f_{\chi^2_K}(t)dt,
\end{equation}
where $f_{\chi^2_K}(\cdot)$ is the density of the chi-square distribution with $K$ degrees of freedom.

\textbf{Unknown $\Omega_{jj}=e_j^T\Sigma^{-1}e_j$.}
If $\Sigma$ is unknown, we describe a consistent estimate of the
quantity $\Omega_{jj}$ appearing in \eqref{eq:normal_K+1}, \eqref{normal-A}, and \eqref{eq:chi2-A}.
Under the Gaussian \Cref{assu:X}, the quantity $\Omega_{jj}$ is the reciprocal of the conditional variance $\Var(x_{ij}| x_{i,-j})$, which is also the noise variance in the linear model of regressing $X e_{j}$ onto $X_{-j}$ (the submatrix of $X$ excluding the $j$-th column). 
According to standard results in linear models, we have 
$\Omega_{jj} 
\|[I_n - X_{-j}(X_{-j}^TX_{-j})^{-1}X_{-j}^T] X e_{j}\|^2 
\sim \chi^2_{n-p+1}. 
$
Since $\chi^2_{n-p+1}/(n-p+1)\to1$ almost surely
by the strong law of large numbers,
\begin{equation}\label{eq:Omega_hat}
    \hat \Omega_{jj} 
= (n-p+1)\big/\|[I_n - X_{-j}(X_{-j}^TX_{-j})^{-1}X_{-j}^T] Xe_j\|^2
\end{equation}
is a consistent estimator of $\Omega_{jj}$.
Therefore, the previous asymptotic results in \Cref{thm:normal-K+1,thm:normal-A}
still hold by Slutsky's theorem
if we replace $\Omega_{jj}$ by the estimate $\hat\Omega_{jj}$ in \eqref{eq:Omega_hat}. 

\section{Numerical experiments}\label{sec:simulation}
This section presents simulations to examine finite sample properties of
the above results and methods.

\textbf{Simulation settings.}

We set $p=1000$ and consider different combinations of $(n,K)$. The covariance matrix $\Sigma$ is specified to be the correlation matrix of an AR(1) model with parameter $\rho=0.5$, that is, $\Sigma=(0.5^{|i-j|})_{p\times p}$. We generate the regression coefficients $A^* \in \R^{p\times K}$ once and for all as follows: sample $A_0\in\R^{p\times K}$ with first $\lceil p/4\rceil$ rows being \iid $N(\bm{0},I_K)$, and set the remaining rows to 0. We then scale the coefficients by defining $A^*=A_0(A_0^T\Sigma A_0)^{-1/2}$ so that $A^*{}^T\Sigma A^*=I_K$. With this construction, the $p$-th variable is always a null covariate , and we use this null coordinate $j=p$ to demonstrate the effectiveness of our theoretical results presented in \Cref{thm:normal-A} and the suggested test for testing $H_0$ as described in \eqref{H0}.
Using the above settings, we generate the design matrix $X\in \R^{n\times p}$ from $N(0, \Sigma)$, and then simulate the labels from a multinomial logistic model as given in \eqref{eq:log-odds}, using the coefficients $A^*\in \R^{p\times K}$. For each simulation setting, we perform 5,000 repetitions.

\textbf{Assessment of $\chi^2$ approximations.}
To assess the $\chi^2$ approximation 
\eqref{eq:chi2-A} from this paper and that of the classical theory \eqref{eq:chi2-classical}, 
we compute the two $\chi^2_K$ test statistics for each sample $(x_i, y_i)_{i=1}^n$. 
\Cref{fig:qqplots} shows the 
empirical quantiles of the two statistics versus the $\chi^2_K$ distribution quantiles. 
The results demonstrate that the quantiles (in blue) from our high-dimensional theory closely match the 45-degree line (in red), whereas the quantiles (in orange) from the classical theory significantly deviate from the 45-degree line. These findings highlight the accuracy of our proposed $\chi^2$ approximation \eqref{eq:chi2-A} over the classical result \eqref{eq:chi2-classical} when $p$ is not sufficiently small compared to $n$.
\begin{figure}[]
    \centering
    \begin{subfigure}[b]{0.32\textwidth}
        \centering
        \includegraphics[width=\textwidth]{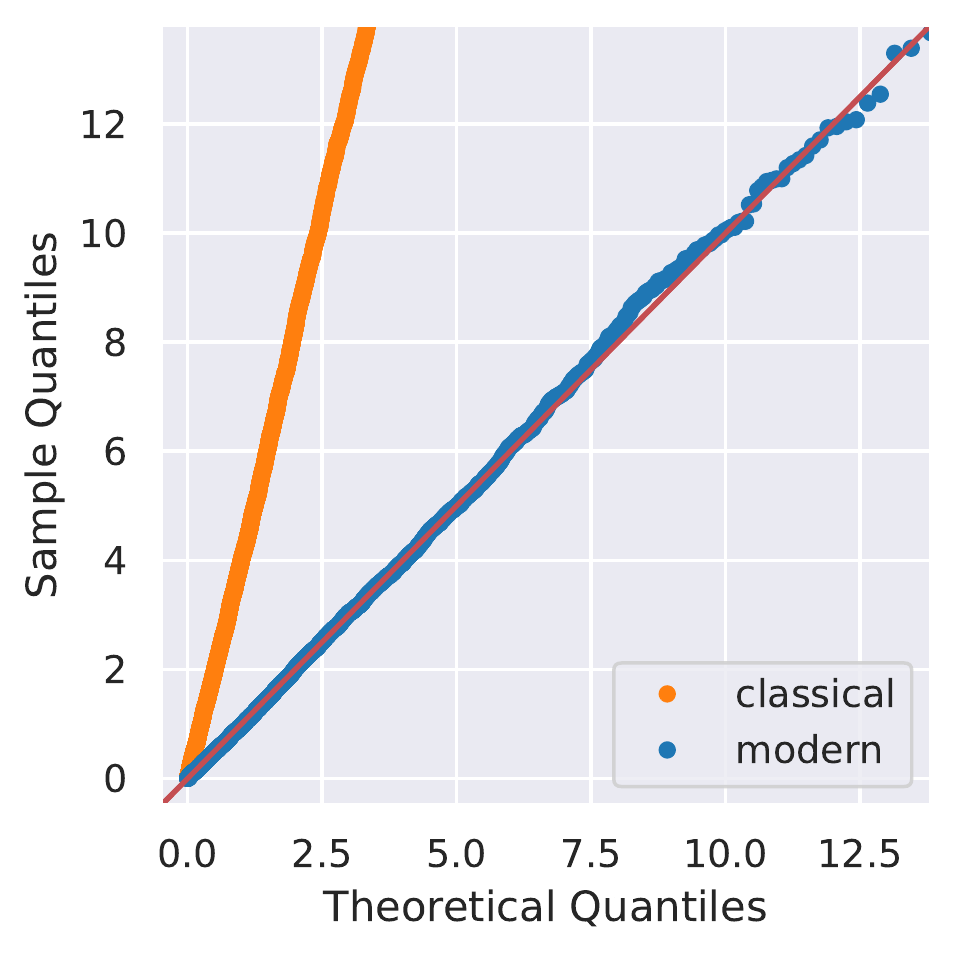}
        \caption{\footnotesize $(n,K)=(4000,2)$}
    \end{subfigure}
    \hfill
    \begin{subfigure}[b]{0.32\textwidth}
        \centering
        \includegraphics[width=\textwidth]{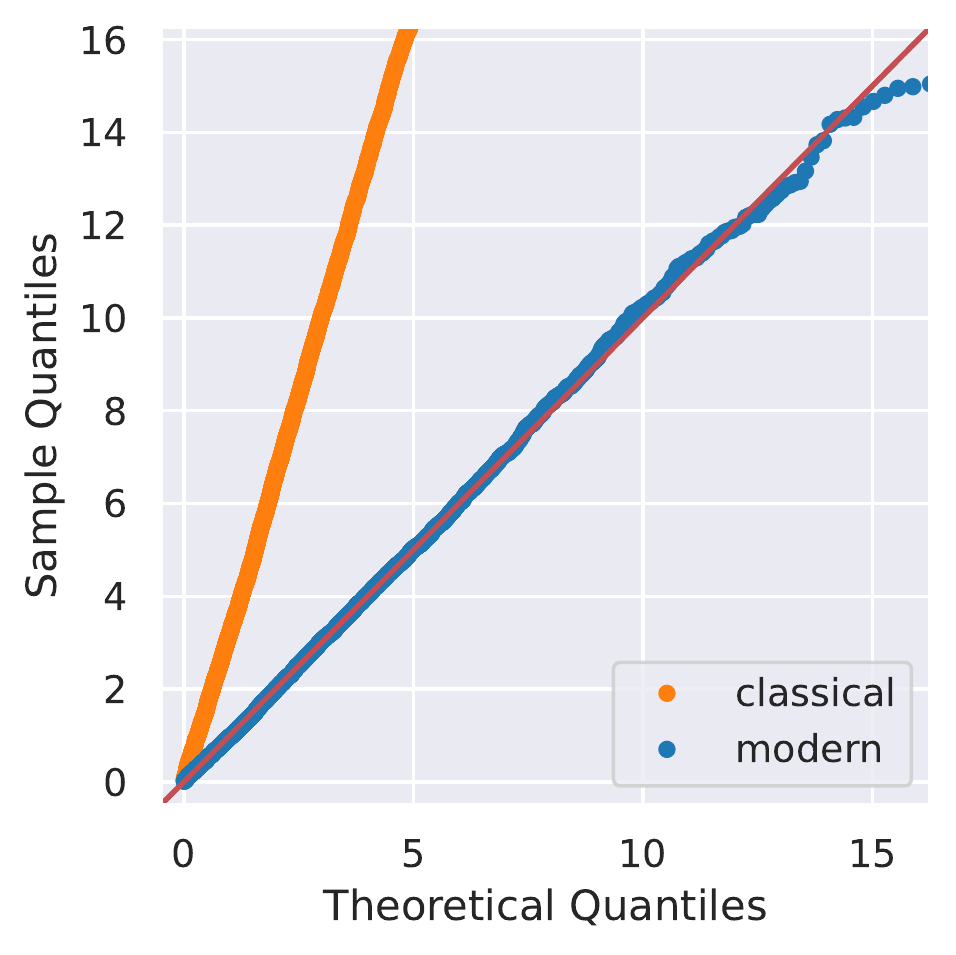}
        \caption{\footnotesize $(n,K)=(5000,3)$}
        \end{subfigure}
    \hfill
    \begin{subfigure}[b]{0.32\textwidth}
        \centering
        \includegraphics[width=\textwidth]{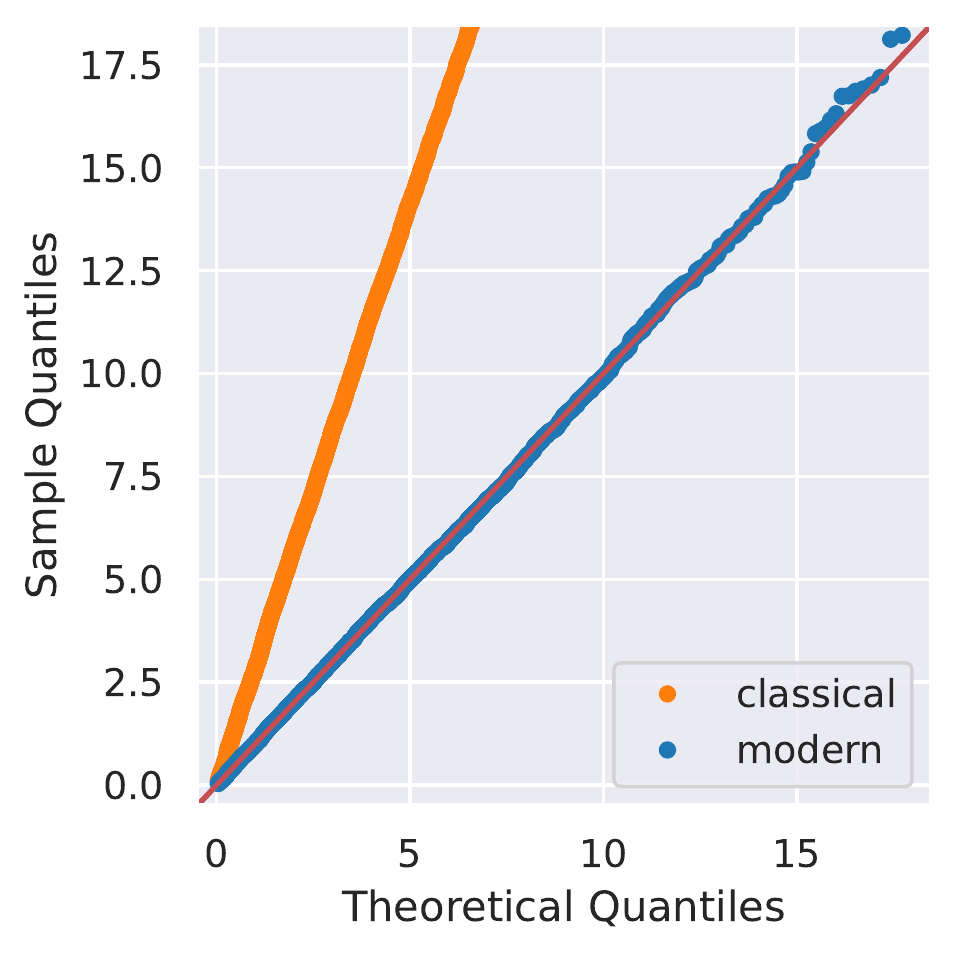}
        \caption{\footnotesize $(n,K)=(6000,4)$}
    \end{subfigure}
    \caption{Q-Q plots of the test statistic in the left-hand side of
    \eqref{eq:chi2-classical} (in orange) and in the left-hand side of \eqref{eq:chi2-A} (in blue) for different $(n,K)$ and $p=1000$.}
    \label{fig:qqplots}
\end{figure}
\begin{figure}[]
    \centering
    \begin{subfigure}[b]{0.32\textwidth}
        \centering
        \includegraphics[width=\textwidth]{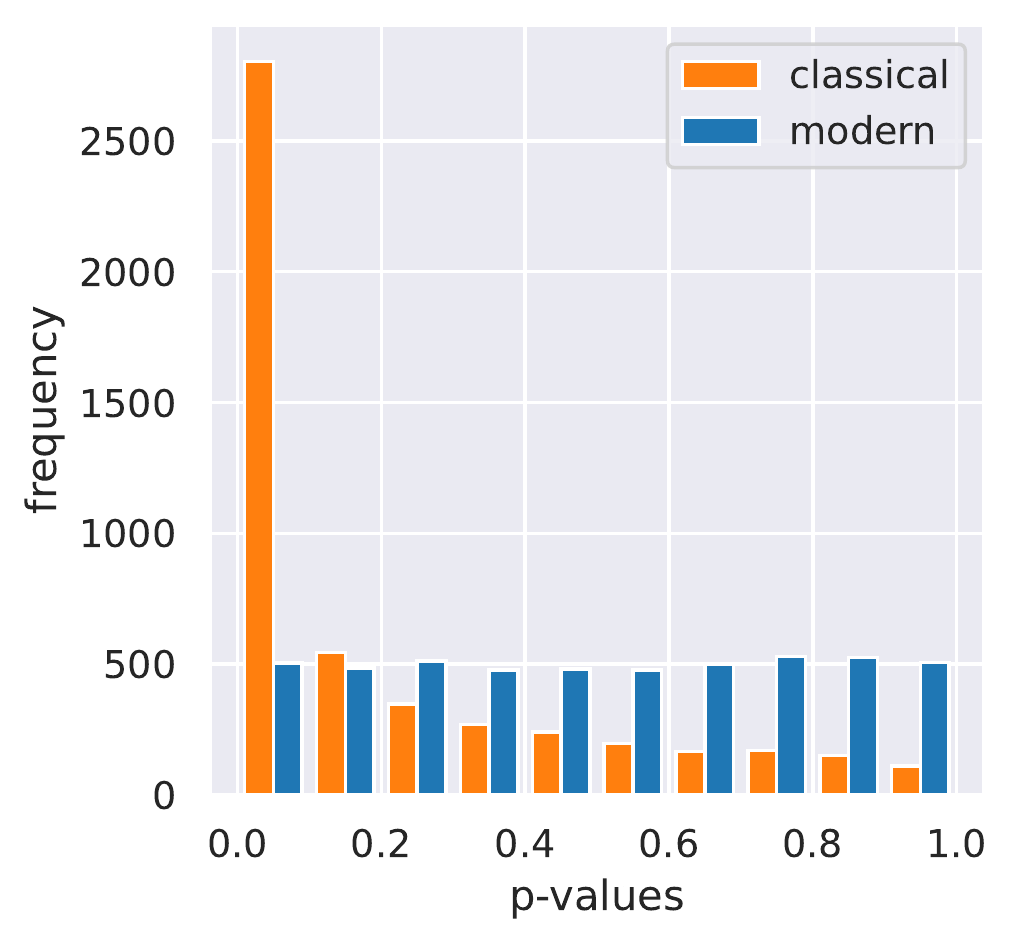}
        \caption{\footnotesize $(n,K)=(4000,2)$}
    \end{subfigure}
    \hfill
    \begin{subfigure}[b]{0.32\textwidth}
        \centering
        \includegraphics[width=\textwidth]{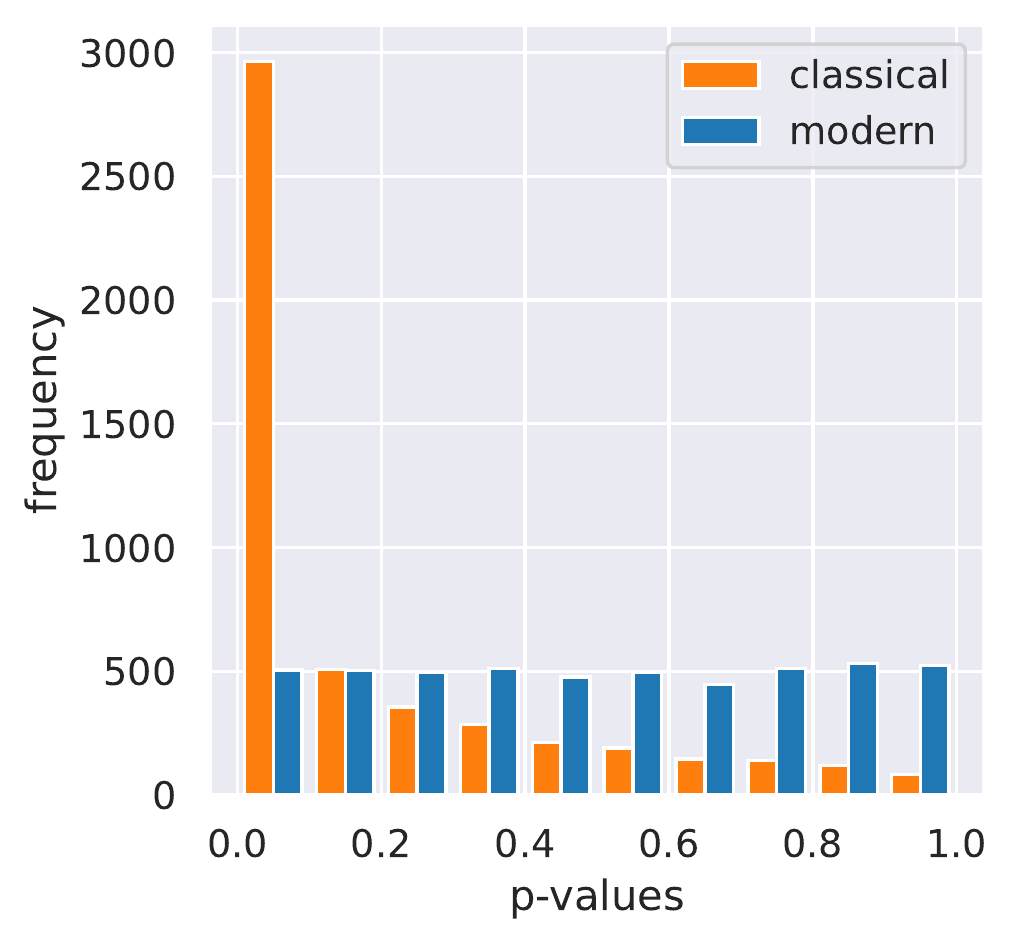}
        \caption{\footnotesize $(n,K)=(5000,3)$}
        \end{subfigure}
    \hfill
    \begin{subfigure}[b]{0.32\textwidth}
        \centering
        \includegraphics[width=\textwidth]{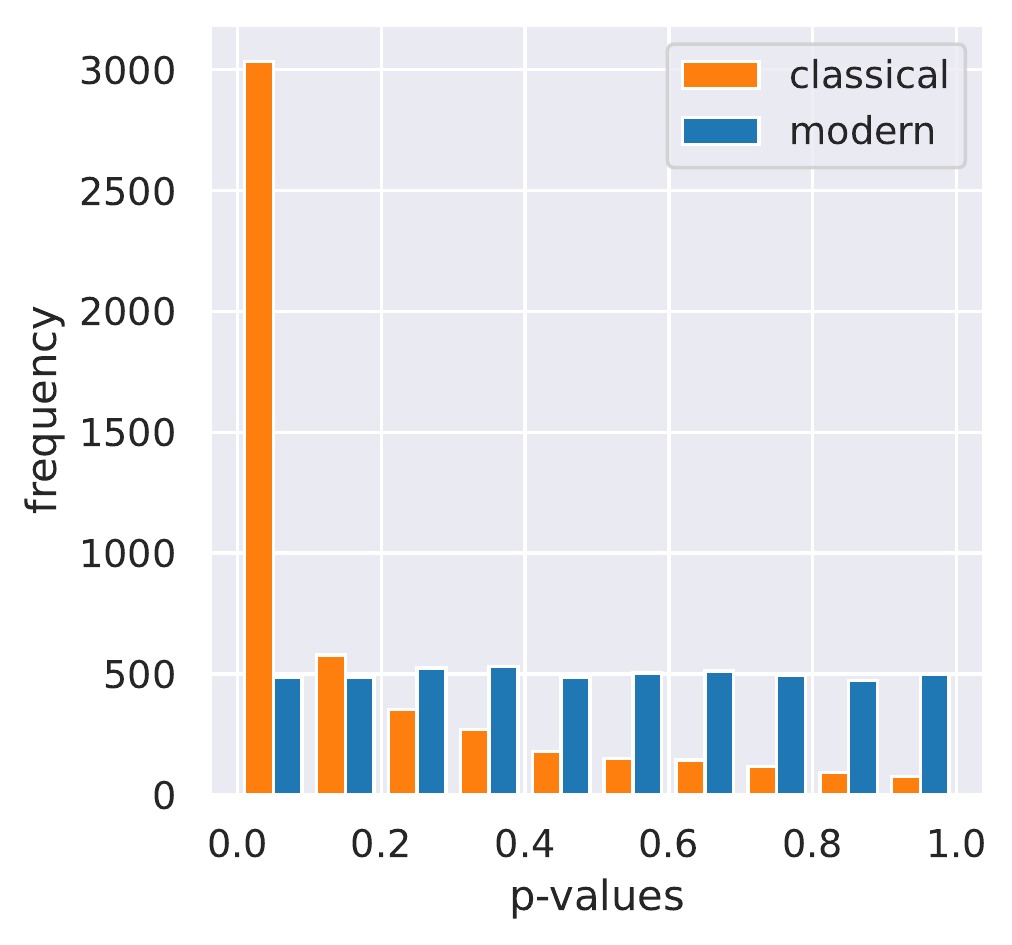}
        \caption{\footnotesize $(n,K)=(6000,4)$}
    \end{subfigure}
    \caption{Histogram for p-values of the classical test \eqref{eq:classical-p-value} (in orange) and of the proposed test 
    \eqref{eq:modern_p_value} (in blue) under $H_0$ in simulated data with different $(n,K)$ and $p=1000$.}
    \label{fig:p-values}
\end{figure}

\textbf{Uniformity of null p-values.}
Recall that the p-value from the classical test \eqref{eq:chi2-classical} is given by \eqref{eq:classical-p-value}, while the p-value from this paper taking into account high-dimensionality is given by \eqref{eq:modern_p_value}. 
\Cref{fig:p-values} displays the histograms of these two sets of p-values out of 5000 repetitions. 
The results in \Cref{fig:p-values} show that the p-values obtained from the classical test deviate significantly from the uniform distribution, with a severe inflation in the lower tail. 
This indicates that the classical test tends to produce large type I errors due to the excess of p-values close to 0.
In contrast, the p-values proposed in this paper exhibit a uniform distribution, further confirming the effectiveness and applicability of the theory in \Cref{thm:normal-A} for controlling type I error when testing for null covariates with \eqref{H0}.

\textbf{Unknown $\Omega_{jj}$.}
In the situation where the covariance matrix $\Sigma$ is unknown, we can estimate the diagonal element $\Omega_{jj}=e_j^T\Sigma^{-1}e_j$ by ${\smash{\hat\Omega_{jj}}}$ defined in \eqref{eq:Omega_hat}. 
To evaluate the accuracy of the normal and chi-square approximations and the associated test with $\Omega_{jj}$ replaced by ${\smash{\hat\Omega_{jj}}}$, 
we conduct simulations similar to those in \Cref{fig:qqplots,fig:p-values}, but we replace $\Omega_{jj}$ with its estimate ${\smash{\hat\Omega_{jj}}}$.
The results are presented in Figure~\ref{fig:qqplots-estimated}. The plots
are visually indistinguishable from the plots using $\Omega_{jj}$.
These confirm that the chi-square approximation and the associated test using ${\smash{\hat\Omega_{jj}}}$ are accurate.

\textbf{Non-Gaussian covariates and unknown $\Omega_{jj}$.}
Although our theory assumes Gaussian covariates, we expect that the same results hold for other distributions with sufficiently light tails. 
To illustrate this point, we consider the following two types of non-Gaussian covariates:
(i)
        The design matrix $X$ has \iid Rademacher entries, i.e., $\P(x_{ij}=\pm1)=\frac12$,
(ii)
        Each $x_{ij}$ takes on values 0, 1 and 2 with respectively probabilities $a_j^2, 2a_j(1-a_j)$, and $(1-a_j)^2$, where $a_j$ varies in $[0.25, 0.75]$. Each columns of $X$ are then centered and normalized to have $0$ mean and unit variance. 
    This generation of non-Gaussian covariates is adopted from single-nucleotide poly-morphisms (SNPs) example in \cite{sur2019modern}. 
For these two types of non-Gaussian covariates, we further rescale the feature vectors to ensure that $x_i$ has the same covariance as in the Gaussian case at the beginning of \Cref{sec:simulation}, that is $\Sigma=(0.5^{|i-j|})_{p\times p}$. 
We present the Q-Q plots in \Cref{fig:qqplots-non-Gaussian} using the same settings as in \Cref{fig:qqplots-estimated}, with the only difference being that the covariates in \Cref{fig:qqplots-non-Gaussian} are non-Gaussian distributed.
The Q-Q plots of $\mathcal T_n^j(X,Y)$ in \eqref{eq:chi2-A} plotted in \Cref{fig:qqplots-non-Gaussian} still closely match the diagonal line.
These empirical successes suggest that the normal and $\chi^2_K$ approximations
\eqref{eq:normal_K+1}-\eqref{eq:chi2-A} apply
to a wider range of covariate distributions beyond normally distributed data.

\section{Discussion and future work}
Multinomial logistic regression estimates and their p-values 
are ubiquitous throughout the sciences for analyzing the significance
of explanatory variables on multiclass responses.
Following the seminal work of \cite{sur2019modern} in binary
logistic regression, this paper develops the first valid 
tests and p-values for multinomial logistic estimates
when $p$ and $n$ are of the same order.
For 3 or more classes, this methodology and the corresponding
asymptotic normality results in \Cref{thm:normal-K+1,thm:normal-A}
are novel and provide new understanding of
multinomial logistic estimates (also known as cross-entropy minimizers)
in high-dimensions. 
We expect similar asymptotic normality and chi-square results
to be within reach for loss functions different than the cross-entropy
or a different model for the response $\y_i$; for instance
\Cref{sec:supplement_q_repeated} provides an extension to the 
$q$-repeated measurements model, where $q$ responses are observed
for each feature vector $x_i$.

Let us point a few follow-up research directions that we leave open for future
work. A first open problem regards extensions of our methodology to 
confidence sets for 
$e_j^T\B^*$ when $H_0$ in \eqref{H0} is violated for the $j$-th covariate.
This would require more stringent assumptions on the generative model
than \Cref{assu:X} as
$\B^*$ there is not identifiable (e.g., modification of both $\B^*$
and $f(\cdot,\cdot)$ in \Cref{assu:X} is possible without changing $\y_i$).
A second open problem is to relate this paper's theory to the 
fixed-point equations and limiting Gaussian model obtained in
multiclass models, e.g., \cite{loureiro2021learning}.
While it may be straightforward to obtain the limit
of $\frac 1n\sum_{i=1}^n\g_i\g_i^T$ and of the empirical distribution
of the rows of ${\smash{\hat A}}$ in this context (e.g., using Corollary 3
in \cite{loureiro2021learning}), the relationship between the fixed-point
equations and the matrix $\frac1n\sum_{i=1}^n\V_i$ appearing in \eqref{eq:normal_K+1} is unclear and not explained by typical results from this literature.
A third open problem is to characterize the exact phase transition below
which the multinomial logistic MLE exists and is bounded with high-probability
(\Cref{assu:MLE}); while this is settled for two classes \citep{candes2020phase} and preliminary results are available for 3 or more classes
\citep{loureiro2021learning,kini2021phase}, a complete understanding
of this phase transition is currently lacking.
A last interesting open problem is to prove that our theory 
extend to non-Gaussian data, as observed in simulations.
This challenging problem is often referred to as ``universality'' and has received
intense attention recently
\citep{montanari2022universality,gerace2022gaussian,pesce2023gaussian,dandi2023universality}, showing that in several settings of interest (although none exactly the one considered here), the asymptotic behavior
of the minimizers is unchanged if the distribution of the covariates
is modified from normal to another distribution with the same covariance.

\bibliographystyle{plainnat} 
\bibliography{biblio}

\begin{thebibliography}{25}
\providecommand{\natexlab}[1]{#1}
\providecommand{\url}[1]{\texttt{#1}}
\expandafter\ifx\csname urlstyle\endcsname\relax
  \providecommand{\doi}[1]{doi: #1}\else
  \providecommand{\doi}{doi: \begingroup \urlstyle{rm}\Url}\fi

\bibitem[Bellec(2022)]{bellec2022observable}
Pierre~C Bellec.
\newblock Observable adjustments in single-index models for regularized
  m-estimators.
\newblock \emph{arXiv preprint arXiv:2204.06990}, 2022.

\bibitem[Bellec and Zhang(2021)]{bellec2021second}
Pierre~C. Bellec and Cun-Hui Zhang.
\newblock Second-order {S}tein: {SURE} for {SURE} and other applications in
  high-dimensional inference.
\newblock \emph{Ann. Statist.}, 49\penalty0 (4):\penalty0 1864--1903, 2021.
\newblock ISSN 0090-5364.
\newblock URL \url{https://doi.org/10.1214/20-aos2005}.

\bibitem[Berthier et~al.(2020)Berthier, Montanari, and
  Nguyen]{berthier2020state}
Raphael Berthier, Andrea Montanari, and Phan-Minh Nguyen.
\newblock State evolution for approximate message passing with non-separable
  functions.
\newblock \emph{Information and Inference: A Journal of the IMA}, 9\penalty0
  (1):\penalty0 33--79, 2020.

\bibitem[Cand{\`e}s and Sur(2020)]{candes2020phase}
Emmanuel~J. Cand{\`e}s and Pragya Sur.
\newblock {The phase transition for the existence of the maximum likelihood
  estimate in high-dimensional logistic regression}.
\newblock \emph{The Annals of Statistics}, 48\penalty0 (1):\penalty0 27 -- 42,
  2020.
\newblock \doi{10.1214/18-AOS1789}.
\newblock URL \url{https://doi.org/10.1214/18-AOS1789}.

\bibitem[Cornacchia et~al.(2022)Cornacchia, Mignacco, Veiga, Gerbelot,
  Loureiro, and Zdeborov{\'a}]{cornacchia2022learning}
Elisabetta Cornacchia, Francesca Mignacco, Rodrigo Veiga, C{\'e}dric Gerbelot,
  Bruno Loureiro, and Lenka Zdeborov{\'a}.
\newblock Learning curves for the multi-class teacher-student perceptron.
\newblock \emph{arXiv preprint arXiv:2203.12094}, 2022.

\bibitem[Cramer(2002)]{cramer2002origins}
Jan~Salomon Cramer.
\newblock The origins of logistic regression.
\newblock \emph{Tinbergen Institute Working Paper}, 2002.

\bibitem[Dandi et~al.(2023)Dandi, Stephan, Krzakala, Loureiro, and
  Zdeborov{\'a}]{dandi2023universality}
Yatin Dandi, Ludovic Stephan, Florent Krzakala, Bruno Loureiro, and Lenka
  Zdeborov{\'a}.
\newblock Universality laws for gaussian mixtures in generalized linear models.
\newblock \emph{arXiv preprint arXiv:2302.08933}, 2023.

\bibitem[Davidson and Szarek(2001)]{DavidsonS01}
Kenneth~R Davidson and Stanislaw~J Szarek.
\newblock Local operator theory, random matrices and banach spaces.
\newblock \emph{Handbook of the geometry of Banach spaces}, 1\penalty0
  (317-366):\penalty0 131, 2001.

\bibitem[Feng et~al.(2022)Feng, Venkataramanan, Rush, Samworth,
  et~al.]{feng2022unifying}
Oliver~Y Feng, Ramji Venkataramanan, Cynthia Rush, Richard~J Samworth, et~al.
\newblock A unifying tutorial on approximate message passing.
\newblock \emph{Foundations and Trends{\textregistered} in Machine Learning},
  15\penalty0 (4):\penalty0 335--536, 2022.

\bibitem[Gerace et~al.(2022)Gerace, Krzakala, Loureiro, Stephan, and
  Zdeborov{\'a}]{gerace2022gaussian}
Federica Gerace, Florent Krzakala, Bruno Loureiro, Ludovic Stephan, and Lenka
  Zdeborov{\'a}.
\newblock Gaussian universality of linear classifiers with random labels in
  high-dimension.
\newblock \emph{arXiv preprint arXiv:2205.13303}, 2022.

\bibitem[Gerbelot and Berthier(2021)]{gerbelot2021graph}
C{\'e}dric Gerbelot and Rapha{\"e}l Berthier.
\newblock Graph-based approximate message passing iterations.
\newblock \emph{arXiv preprint arXiv:2109.11905}, 2021.

\bibitem[Kini and Thrampoulidis(2021)]{kini2021phase}
Ganesh~Ramachandra Kini and Christos Thrampoulidis.
\newblock Phase transitions for one-vs-one and one-vs-all linear separability
  in multiclass gaussian mixtures.
\newblock In \emph{ICASSP 2021-2021 IEEE International Conference on Acoustics,
  Speech and Signal Processing (ICASSP)}, pages 4020--4024. IEEE, 2021.

\bibitem[Loureiro et~al.(2021)Loureiro, Sicuro, Gerbelot, Pacco, Krzakala, and
  Zdeborov{\'a}]{loureiro2021learning}
Bruno Loureiro, Gabriele Sicuro, C{\'e}dric Gerbelot, Alessandro Pacco, Florent
  Krzakala, and Lenka Zdeborov{\'a}.
\newblock Learning gaussian mixtures with generalized linear models: Precise
  asymptotics in high-dimensions.
\newblock \emph{Advances in Neural Information Processing Systems},
  34:\penalty0 10144--10157, 2021.

\bibitem[Mai et~al.(2019)Mai, Liao, and Couillet]{mai2019large}
Xiaoyi Mai, Zhenyu Liao, and Romain Couillet.
\newblock A large scale analysis of logistic regression: Asymptotic performance
  and new insights.
\newblock In \emph{ICASSP 2019-2019 IEEE International Conference on Acoustics,
  Speech and Signal Processing (ICASSP)}, pages 3357--3361. IEEE, 2019.

\bibitem[Montanari and Saeed(2022)]{montanari2022universality}
Andrea Montanari and Basil~N Saeed.
\newblock Universality of empirical risk minimization.
\newblock In \emph{Conference on Learning Theory}, pages 4310--4312. PMLR,
  2022.

\bibitem[Pedregosa et~al.(2011)Pedregosa, Varoquaux, Gramfort, Michel, Thirion,
  Grisel, Blondel, Prettenhofer, Weiss, Dubourg, Vanderplas, Passos,
  Cournapeau, Brucher, Perrot, and Duchesnay]{scikit-learn}
F.~Pedregosa, G.~Varoquaux, A.~Gramfort, V.~Michel, B.~Thirion, O.~Grisel,
  M.~Blondel, P.~Prettenhofer, R.~Weiss, V.~Dubourg, J.~Vanderplas, A.~Passos,
  D.~Cournapeau, M.~Brucher, M.~Perrot, and E.~Duchesnay.
\newblock Scikit-learn: Machine learning in {P}ython.
\newblock \emph{Journal of Machine Learning Research}, 12:\penalty0 2825--2830,
  2011.

\bibitem[Pesce et~al.(2023)Pesce, Krzakala, Loureiro, and
  Stephan]{pesce2023gaussian}
Luca Pesce, Florent Krzakala, Bruno Loureiro, and Ludovic Stephan.
\newblock Are gaussian data all you need? extents and limits of universality in
  high-dimensional generalized linear estimation.
\newblock \emph{arXiv preprint arXiv:2302.08923}, 2023.

\bibitem[Salehi et~al.(2019)Salehi, Abbasi, and Hassibi]{salehi2019impact}
Fariborz Salehi, Ehsan Abbasi, and Babak Hassibi.
\newblock The impact of regularization on high-dimensional logistic regression.
\newblock \emph{Advances in Neural Information Processing Systems}, 32, 2019.

\bibitem[Sur and Cand\`es(2019)]{sur2019modern}
Pragya Sur and Emmanuel~J. Cand\`es.
\newblock A modern maximum-likelihood theory for high-dimensional logistic
  regression.
\newblock \emph{Proc. Natl. Acad. Sci. USA}, 116\penalty0 (29):\penalty0
  14516--14525, 2019.
\newblock ISSN 0027-8424.
\newblock \doi{10.1073/pnas.1810420116}.
\newblock URL \url{https://doi.org/10.1073/pnas.1810420116}.

\bibitem[Thrampoulidis et~al.(2020)Thrampoulidis, Oymak, and
  Soltanolkotabi]{thrampoulidis2020theoretical}
Christos Thrampoulidis, Samet Oymak, and Mahdi Soltanolkotabi.
\newblock Theoretical insights into multiclass classification: A
  high-dimensional asymptotic view.
\newblock \emph{Advances in Neural Information Processing Systems},
  33:\penalty0 8907--8920, 2020.

\bibitem[Van~der Vaart(1998)]{van1998asymptotic}
Aad~W Van~der Vaart.
\newblock \emph{Asymptotic statistics}, volume~3.
\newblock Cambridge university press, 1998.

\bibitem[van Hemmen and Ando(1980)]{van1980inequality}
J~Leo van Hemmen and Tsuneya Ando.
\newblock An inequality for trace ideals.
\newblock \emph{Communications in Mathematical Physics}, 76:\penalty0 143--148,
  1980.

\bibitem[Yadlowsky et~al.(2021)Yadlowsky, Yun, McLean, and
  D'Amour]{yadlowsky2021sloe}
Steve Yadlowsky, Taedong Yun, Cory~Y McLean, and Alexander D'Amour.
\newblock Sloe: A faster method for statistical inference in high-dimensional
  logistic regression.
\newblock \emph{Advances in Neural Information Processing Systems},
  34:\penalty0 29517--29528, 2021.

\bibitem[Zhao et~al.(2022)Zhao, Sur, and Candes]{zhao2022asymptotic}
Qian Zhao, Pragya Sur, and Emmanuel~J Candes.
\newblock The asymptotic distribution of the mle in high-dimensional logistic
  models: Arbitrary covariance.
\newblock \emph{Bernoulli}, 28\penalty0 (3):\penalty0 1835--1861, 2022.

\bibitem[Zhu and Hastie(2004)]{zhu2004classification}
Ji~Zhu and Trevor Hastie.
\newblock Classification of gene microarrays by penalized logistic regression.
\newblock \emph{Biostatistics}, 5\penalty0 (3):\penalty0 427--443, 2004.

\end{thebibliography}

\newpage
\setcounter{section}{0}
\setcounter{figure}{0}
\renewcommand{\thesection}{S\arabic{section}}
\renewcommand{\thefigure}{S\arabic{figure}}


{\center\Large
    Supplementary Material of 
	``Multinomial Logistic Regression:
    Asymptotic Normality on Null Covariates in High-Dimensions''
    \\[5mm]
}
Let us define some standard notation that will be used in the rest of this supplement. 
For a vector $v\in \R^{n}$, let $\|v\|_{\infty} = \max_{i\in[n]} |v_i|$ denote the infinity norm of vector $v$. 
If $A$ is symmetric, we define $\lambda_{\min}(A)$ and $\|A\|_{op}$ as the minimal and maximal eigenvalues of $A$, respectively.
For two symmetric matrices $A,B$ of the same size, we write $A\preceq B$ if and only if $B-A$ is positive semi-definite.

\section*{Diagram: Organization of the proofs}

The following diagram summarizes the different theorems and lemmas, and
the relationships between them.

\begin{center}
{\tiny
\begin{tikzpicture}[node distance=2.2in]
    \node(22){
            \begin{tcolorbox}[width=1.7in,title=\Cref{thm:normal-A}]
                Asymptotic normality for $\hat A^Te_j$ on null covariates,
                where $\hat A\in\R^{p\times K}$ is the multinomial logistic MLE
                with class $K+1$ fixed as the reference class
                (see \eqref{eq:relationship_A_B}).
        \end{tcolorbox}
    };
    \node(21) [right of=22]{
            \begin{tcolorbox}[width=1.7in,title=\Cref{thm:normal-K+1}]
                Asymptotic normality for $\hat \B^Te_j$ on null covariates,
                where $\hat \B\in\R^{p\times (K+1)}$ is the multinomial logistic MLE
                in \eqref{barB}.
        \end{tcolorbox}
    };
    \node(S31) [right of =21] { 
            \begin{tcolorbox}[width=1.7in,title=\Cref{thm:normal}]
                Asymptotic normality for $\hat B^Te_j$ on null covariates,
                where $\hat B\in\R^{p\times K}$ is the multinomial logistic MLE
                using the parameter space from \Cref{sec:model-K}.

                The proof uses that the conditions in \Cref{thm:general_Sigma}
                on the loss function are satisfied by the cross-entropy.
        \end{tcolorbox}
        };
    \node(Q) [below of =22, node distance=1.7in] {
            \begin{tcolorbox}[width=1.7in,title=$K$-dimensional orthogonal parametrization defined by the matrix the $Q$]
                \Cref{sec:model-K} defines the matrix $Q\in\R^{(K+1)\times K}$
                and discusses a convenient parametrization of the model isometric to the subspace
                orthogonal to $\bm 1_{K+1}$. 
            \end{tcolorbox}
        };
    \node(GradHes) [below of =21, node distance=2.4in] {
            \begin{tcolorbox}[width=1.7in,title=Control of $\g_i$ and $\H_i$ for the cross-entropy loss]
\Cref{lem:Hessian>0,lem:gradient>0} give deterministic arguments to control
the gradients and Hessians of the cross-entropy loss. \Cref{lem:HX} controls the Hessian of the cross-entropy loss at the minimizer, in a specific high-probability event.
\Cref{lem:XtoG-Lipschitz} defines this high-probability event.
            \end{tcolorbox}
        };
    \node(GeneralSigma) [below of =S31] {
            \begin{tcolorbox}[width=1.7in,title=\Cref{thm:general_Sigma}]
                Asymptotic
                normality on null covariates for general loss functions,
                $\Sigma\ne I_p$.

                Deduced from \Cref{thm:general_identity} by rotational
                invariance.
            \end{tcolorbox}
        };
    \node(GeneralIdentity) [below of =GeneralSigma] {
            \begin{tcolorbox}[width=1.7in,title=\Cref{thm:general_identity}]
                Asymptotic
                normality on null covariates for general loss functions,
                $\Sigma=I_p$.
            \end{tcolorbox}
        };
    \node(Chi2) [below of =GradHes, node distance=2.5in] {
            \begin{tcolorbox}[width=1.7in,title=\Cref{lem:chi2}]
                Normal and $\chi^2$ approximations for random variables
                defined as a differentiable function of standard normal vectors.
            \end{tcolorbox}
       };
    \node(Derivatives) [below of =Q, node distance=1.8in] {
            \begin{tcolorbox}[width=1.7in, title=\Cref{lem:dot-g}]
                    \Cref{lem:dot-g} computes the derivatives of the
                    minimizer with respect to $X$, used in the proof
                    of \Cref{thm:general_identity}.
            \end{tcolorbox}
       };
\draw[blue, line width=2mm,-{Triangle[angle=60:1pt 3]}] (21) -- (22);
\draw[blue, line width=2mm,-{Triangle[angle=60:1pt 3]}] (S31) -- (21);
\draw[blue, line width=2mm,-{Triangle[angle=60:1pt 3]}] (Q) -- (S31);
\draw[blue, line width=2mm,-{Triangle[angle=60:1pt 3]}] (GradHes) -- (S31);
\draw[blue, line width=2mm,-{Triangle[angle=60:1pt 3]}] (GeneralSigma) -- (S31);
\draw[blue, line width=2mm,-{Triangle[angle=60:1pt 3]}] (GeneralIdentity) -- (GeneralSigma);
\draw[blue, line width=2mm,-{Triangle[angle=60:1pt 3]}] (Chi2) -- (GeneralIdentity);
 \draw[blue, line width=2mm,-{Triangle[angle=60:1pt 3]}] (Derivatives) -- (GeneralIdentity);
\end{tikzpicture}
}
\end{center}

\newpage
\section{Extension: $q$ repeated measurements}
\label{sec:supplement_q_repeated}

Let integer $q\ge 1$ be a constant independent of $n,p$.
Our results readily extend if $q$ labels are observed for each observed feature
vector $x_i$, and the corresponding $q$ one-hot encoded vectors
are averaged into $\y_i\in\{0,\frac1q,\frac2q,...,1\}^{K+1}$.
Concretely, for each observation $i\in[n]$,
$q$ \iid labels $(Y_i^m)_{m\in[q]}$ are observed with each $Y_i^m\in\{0,1\}^{K+1}$ one-hot encoded and $\y_{ik} = \frac1q \sum_{m=1}^q Y_{ik}^m$,
for instance in a repeated multinomial regression model with
$\P(Y_{ik}^m=1|x_i)$ equal to right-hand side of \eqref{model:over-specified}.
In this case where $(Y_i^m)_{m\in[q]}$ are \iid,
\Cref{assu:Y} is satisfied by the law of large numbers
if $\min_{k\in[K+1]}\P(Y_{ik}^m = 1)>0$ since $q$ is constant.
For this $q$ repeated measurements model, the negative log-likelihood function of a parameter $\B\in \R^{p\times (K+1)}$ is 
\begin{align*}
    &-\sum_{i=1}^n \sum_{m=1}^q \sum_{k=1}^{K+1}
    Y_{ik}^m \Bigl[x_i^T \B e_k - \log \sum_{k'=1}^{K+1} \exp(x_i^T \B e_{k'})\Bigr]\\
    =~& q \sum_{i=1}^n \sum_{k=1}^{K+1}
    \y_{ik} \Bigl[-x_i^T \B e_k + \log \sum_{k'=1}^{K+1} \exp(x_i^T \B e_{k'})\Bigr]\\
    =~& q \sum_{i=1}^n \Bigl[\sum_{k=1}^{K+1}
    -\y_{ik} x_i^T \B e_k + \log \sum_{k'=1}^{K+1} \exp(x_i^T \B e_{k'})\Bigr]\\
    =~& q \sum_{i=1}^n \L_i(\B^T x_i),
\end{align*}
where the first equality uses 
$\y_{ik} = \frac{1}{q} \sum_{m=1}^q Y_{ik}^m$, the second equality uses 
$\sum_{k=1}^{K+1} \y_{ik}=1$ under the following \Cref{assu:one_hot_q}, and the last equality uses the definition of $\L_i$ after \eqref{barB}. 
\begin{assumption}\label{assu:one_hot_q}
    For all $i\in[n],$ the response $\y_i$ is in $\{0,1/q,2/q,...,1\}^{K+1}$
    with $\sum_{k=1}^{K+1} \y_{ik}=1$.
\end{assumption}
In such repeated measurements model, we replace \Cref{assu:one_hot} with
\Cref{assu:one_hot_q} under which the following \Cref{thm:repeated} holds.

\begin{theorem}
    \label{thm:repeated}
    Let $q\ge 2$ be constant.
    Let \Cref{assu:X,assu:one_hot_q,assu:Y,assu:MLE} be fulfilled. 
    For any $j\in[p]$ such that $H_0$ in \eqref{H0} holds,
    we have the convergence in distribution \eqref{eq:normal_K+1}, \eqref{normal-A} and
    \eqref{eq:chi2-A}.
\end{theorem}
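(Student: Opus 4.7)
The plan is to deduce \Cref{thm:repeated} from the arguments already used to establish \Cref{thm:normal-K+1,thm:normal-A} for one-hot responses, by observing that neither the MLE nor the essential ingredients of those proofs change when $\y_i$ is replaced by an average of $q$ one-hot vectors. The starting point is the computation directly preceding \Cref{assu:one_hot_q}: the negative log-likelihood of the $q$-repeated model equals $q \sum_{i=1}^n \L_i(\B^T x_i)$. Since this is a positive multiple of the cross-entropy objective \eqref{barB} evaluated at the averaged responses $\y_i$, the MLE $\hat\B$ coincides with a minimizer of \eqref{barB}, and every derived quantity --- $\hat\p_i$, $\H_i = \diag(\hat\p_i) - \hat\p_i\hat\p_i^T$, $\V_i$, and the stationarity condition $\sum_i x_i(\hat\p_i-\y_i)^T=0$ --- takes exactly the same form as in \Cref{sec:intro}. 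Only the response $\y_i$ now takes fractional values in $\{0,1/q,\ldots,1\}^{K+1}$, still summing to $1$ and still bounded by $1$ coordinate-wise.

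The next step is to verify the probabilistic hypotheses needed by the proof of \Cref{thm:normal-K+1}. \Cref{assu:X} is satisfied in the $q$-repeated model by taking $U_i=(U_i^{(1)},\ldots,U_i^{(q)})$ and $f(U_i,x_i^T\B^*)=\tfrac1q\sum_{m=1}^q f_0(U_i^{(m)},x_i^T\B^*)$, where $f_0$ generates a single one-hot label; the resulting $\y_i$ depends on $x_i$ only through the $K$-dimensional projection $x_i^T\B^*$. \Cref{assu:MLE} is a condition on $\hat\B$ and $X$, hence unchanged, and \Cref{assu:Y} is imposed as stated. Under these assumptions, the uniform bounds $\|\y_i\|_\infty\le 1$ and $\|\g_i\|_\infty\le 1$ continue to hold and, because $q$ is constant, class balance of the fractional $\y_i$ follows from class balance of the underlying $Y_i^m$.

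The crux --- and the main technical obstacle --- is to confirm that the proof of \Cref{thm:normal-K+1} (via its reduction to \Cref{thm:general_Sigma} and \Cref{thm:general_identity}) never exploits the fact that $\y_{ik}$ takes values in $\{0,1\}$ beyond properties preserved by \Cref{assu:one_hot_q}. Inspection of that proof shows that $\y_i$ enters only through: (i) the first-order condition above, structurally identical in both models; (ii) a uniform $\ell_\infty$-bound on $\g_i$ in the leave-one-out and Stein/Gaussian-integration-by-parts computations of \Cref{lem:dot-g,lem:chi2,thm:general_identity}; and (iii) the normalization $\tfrac1n\sum_i\g_i\g_i^T$, which appears self-normalized in the pivot and therefore does not need a specific deterministic limit. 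The lemmas controlling the cross-entropy Hessian and gradient (\Cref{lem:Hessian>0,lem:gradient>0,lem:HX,lem:XtoG-Lipschitz}) depend only on $\hat\p_i$, $\H_i$ and the design $X$, not on $\y_i$. Any step that would literally invoke the vertex set $\{0,1\}^{K+1}$ can be replaced by the corresponding argument for $[0,1]^{K+1}$, at most incurring a cosmetic $q$-dependent constant that is absorbed since $q$ is fixed. Once this verification is complete, the proofs of \Cref{thm:normal-K+1,thm:normal-A} apply verbatim with the averaged $\y_i$, yielding \eqref{eq:normal_K+1}, \eqref{normal-A} and \eqref{eq:chi2-A} for \Cref{thm:repeated}.
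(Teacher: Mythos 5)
Your proposal is correct and follows essentially the same route as the paper: the paper's proof likewise observes that the $q$-repeated negative log-likelihood is $q\sum_i \L_i(\B^Tx_i)$, so the MLE is a minimizer of \eqref{barB}, and that the proofs of \Cref{thm:normal-K+1,thm:normal-A} (and the supporting lemmas, which are stated under ``\Cref{assu:one_hot} or \Cref{assu:one_hot_q}'') never use that $\y_{ik}$ is $\{0,1\}$-valued, only that $\y_i\in[0,1]^{K+1}$ with $\sum_k\y_{ik}=1$. Your additional itemization of exactly where $\y_i$ enters the argument is a more explicit verification than the paper provides, but it is the same proof.
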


\begin{proof}[Proof of \Cref{thm:repeated}]
Under the assumptions in \Cref{thm:repeated}, 
the MLE $\hat \B$ for this $q$ repeated measurements model is the minimizer of the optimization problem 
$$\hat \B \in \argmin_{\B\in \R^{p\times (K+1)}}\sum_{i=1}^n \L_i(\B^T x_i)$$
as in \eqref{barB}. 
Similar to the non-repeated model, the MLE $\hat A$ for the identifiable log-odds model can be expressed as
$$\hat A = \argmin_{A\in \R^{p\times K}}\sum_{i=1}^n \L_i((A,\bm{0}_p)^T x_i). $$ 
The only difference between this $q$ repeated measurements model and the non-repeated model considered in the main text is that the response $\y_{ik}$ for this $q$ repeated measurements model is now valued in $\{0, 1/q, 2/q, ..., 1\}$. 
Because the proofs of \Cref{thm:normal-K+1,thm:normal-A} do not require the value of $\y_{ik}$ to be $\{0,1\}$-valued. 
\Cref{thm:repeated} can be proved by the same arguments used in the proof of \Cref{thm:normal-K+1,thm:normal-A}. 
\end{proof}

\section{Implementation details and additional figures}
The pivotal quantities in our main results \Cref{thm:normal-K+1,thm:normal-A} involve only observable quantities that can be computed from the data $(x_i, \y_i)_{i\in[n]}$. 
In this section we provide an efficient way of computing the matrix $\V_i$ appearing in \Cref{thm:normal-K+1,thm:normal-A}. 

\paragraph{Fast computation of $\V_i$.}
Recall the definition of $\V_i$ in \Cref{thm:normal-K+1}, 
$$\V_i=
    \H_i - (\H_i \otimes x_i^T)\Bigl[\sum_{l=1}^n \H_l\otimes (x_lx_l^T)\Bigr]^{\dagger}(\H_i\otimes x_i). $$
The majority of computational cost in calculating $\V_i$ lies in the step of calculating its second term
$$(\H_i \otimes x_i^T)\Bigl[\sum_{l=1}^n \H_l\otimes (x_lx_l^T)\Bigr]^{\dagger}(\H_i \otimes x_i^T).$$
Here we provide an efficient way to compute this term using the Woodbury matrix identity. 
Since 
$\H_i \bm 1_{K+1} = \bm 0_{K+1}$, we have 
$\ker(\H_i\otimes (x_ix_i^T))$ is the span of 
$\{\bm 1_{K+1}\otimes e_j : j\in [p]\}$, where $\bm 1_{K+1}$ is the all-ones vector in $\R^{K+1}$. 
Therefore, the second term in $\V_i$ can be rewritten as 
\begin{align*}
    &(\H_i \otimes x_i^T) \Bigl[\sum_{l=1}^n \H_l\otimes (x_lx_l^T)\Bigr]^{\dagger}(\H_i\otimes x_i) \\
    =~& (\H_i \otimes x_i^T) \Bigl[\sum_{l=1}^n \H_l\otimes (x_lx_l^T) 
    - \sum_{j=1}^p(\bm 1\otimes e_j) (\bm 1 \otimes e_j)^T\Bigr]^{-1}(\H_i\otimes x_i).
\end{align*}
We now apply the Woodbury matrix identity to compute the matrix inversion in the above display.
Recall $\H_i = \diag(\hat\p_i) - \hat\p_i \hat\p_i^T$, we have 
$$
\sum_{i=1}^n \H_i\otimes (x_ix_i^T)
= \sum_{k=1}^{K+1} (e_k e_k^T) \otimes (\sum_{i=1}^n \hat\p_{ik}x_ix_i^T)
- 
\sum_{i=1}^n (\hat\p_i\otimes x_i)(\hat\p_i\otimes x_i)^T. 
$$
Let 
$
A = \sum_{k=1}^{K+1} (e_k e_k^T) \otimes (\sum_{i=1}^n \hat\p_{ik}x_ix_i^T)
$, 
and 
$
U \in\R^{p(K+1)\times (n+p)}
$ with the first $n$ columns being $(\hat\p_i\otimes x_i)_{i\in[n]}$ and the following $p$ columns $(\bm 1\otimes e_j)_{j\in[p]}$. 
Then the term we want to invert is $A - UU^T$, where $A$ is a block diagonal matrix and can be inverted by inverting each block separately.
By the Woodbury matrix identity, we have 
\begin{align*}
(A - U U^T)^{-1} 
    &= A^{-1} - A^{-1} U (-I_{n+p} + U^T A^{-1} U)^{-1} U^T A^{-1}. 
\end{align*}
The gain of using the above formula is significant for large $K$:
instead of inverting the $p(K+1)\times p(K+1)$ matrix $\sum_{l=1}^n \H_l \otimes (x_lx_l^T)$ in the left-hand side, the right-hand side only requires to invert a block diagonal matrix $A$ and a $(n+p)\times (n+p)$ matrix $-I_{n+p} + U^T A^{-1} U$. 
\newpage

\begin{figure}[H]
    \centering
    \begin{subfigure}[b]{0.32\textwidth}
        \centering
        \includegraphics[width=\textwidth]{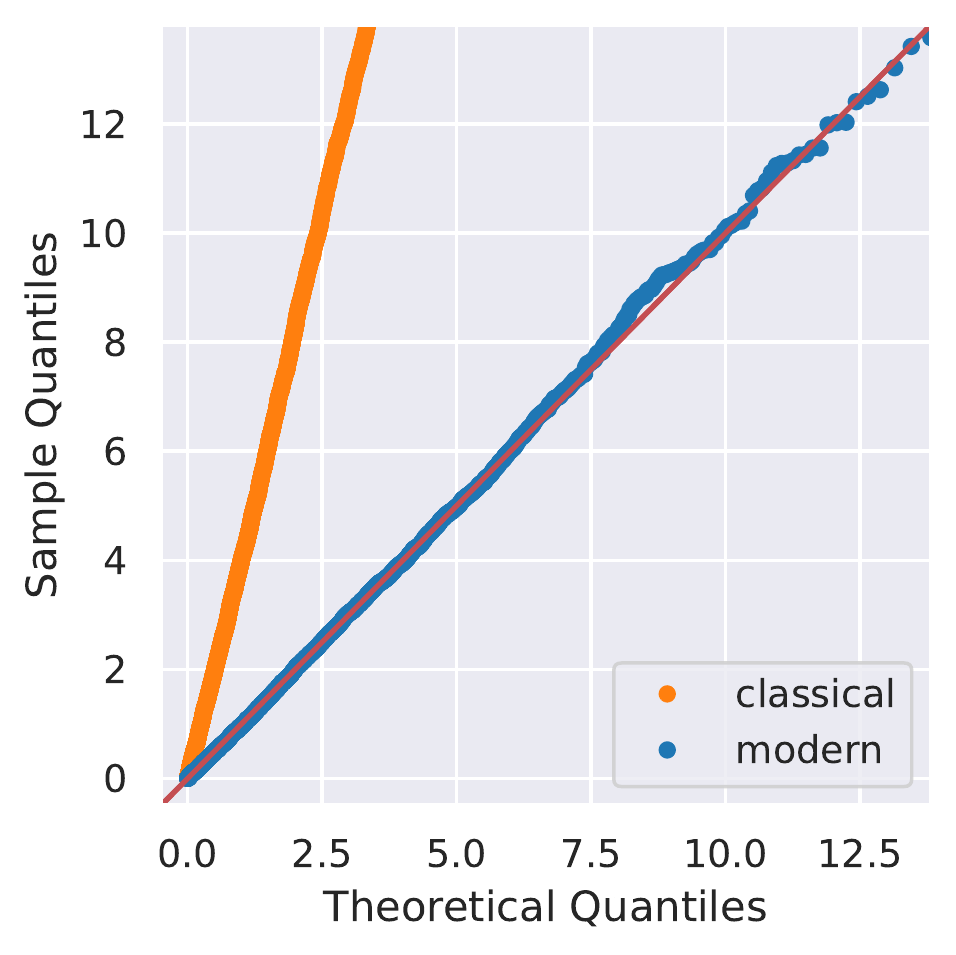}
    \end{subfigure}
    \hfill
    \begin{subfigure}[b]{0.32\textwidth}
        \centering
        \includegraphics[width=\textwidth]{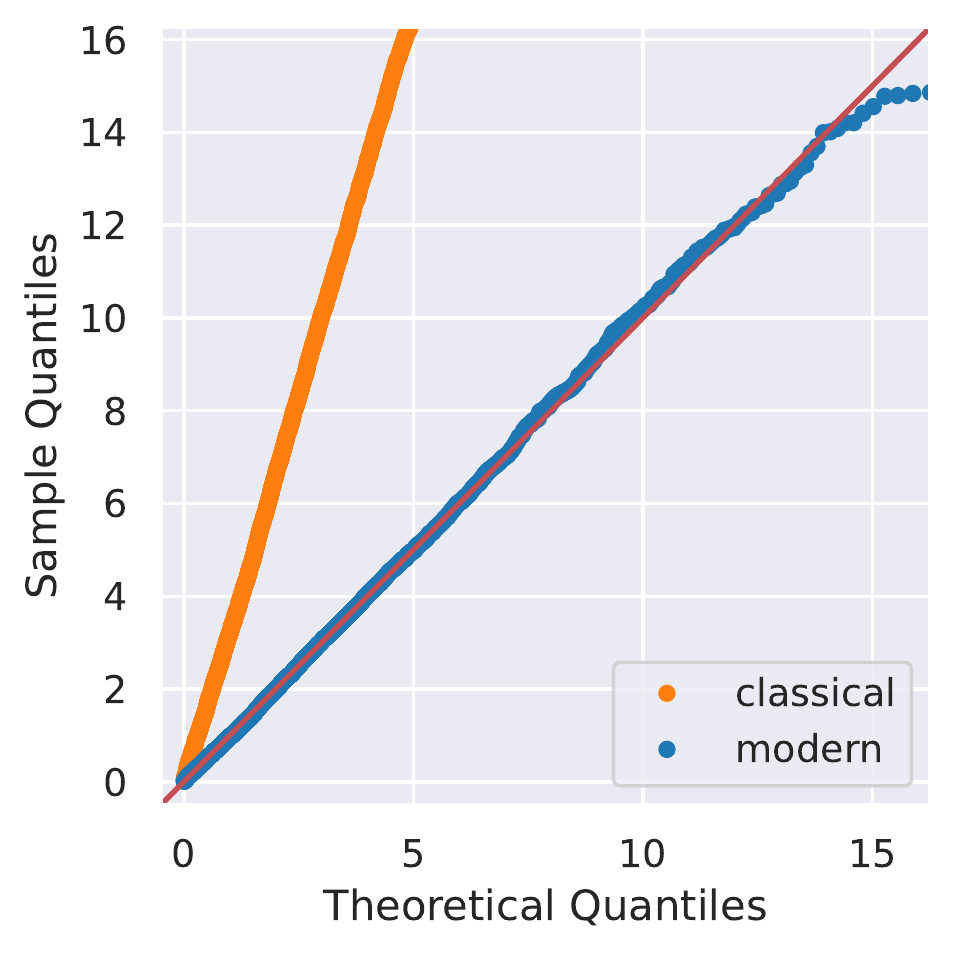}
        \end{subfigure}
    \hfill
    \begin{subfigure}[b]{0.32\textwidth}
        \centering
        \includegraphics[width=\textwidth]{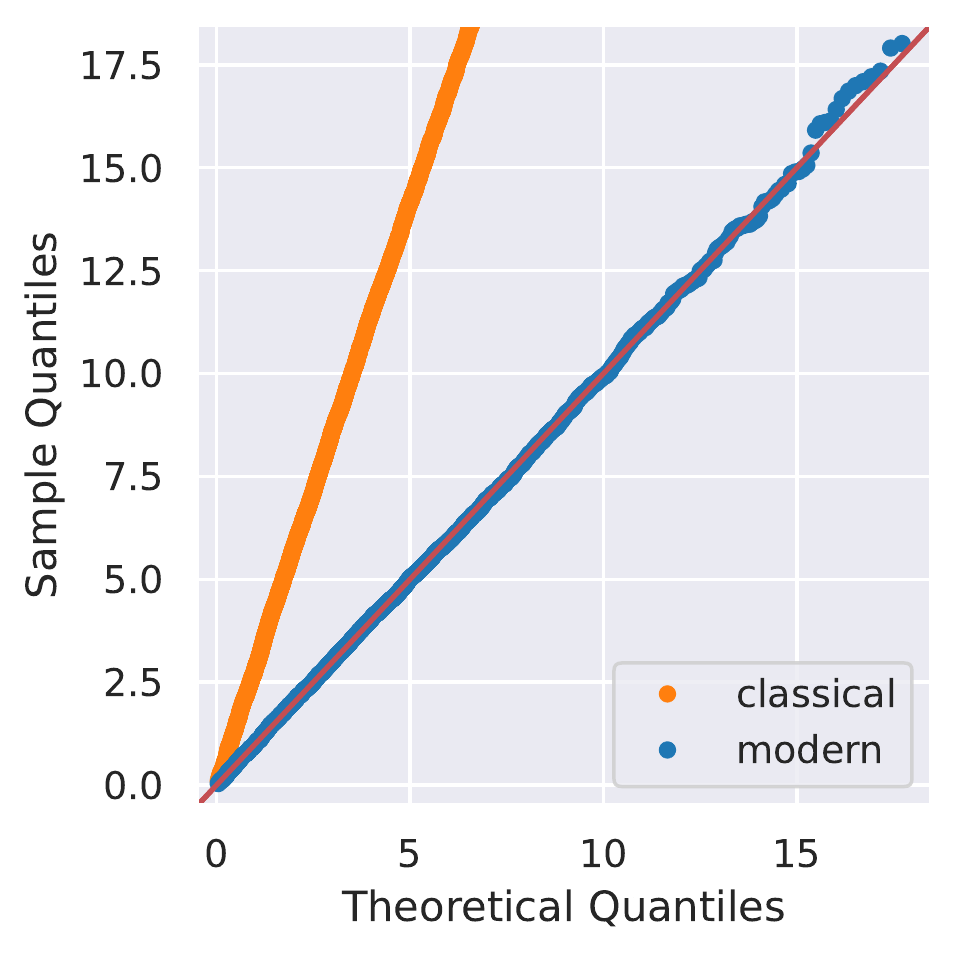}
    \end{subfigure}
    \\
    \begin{subfigure}[b]{0.32\textwidth}
        \centering
        \includegraphics[width=\textwidth]{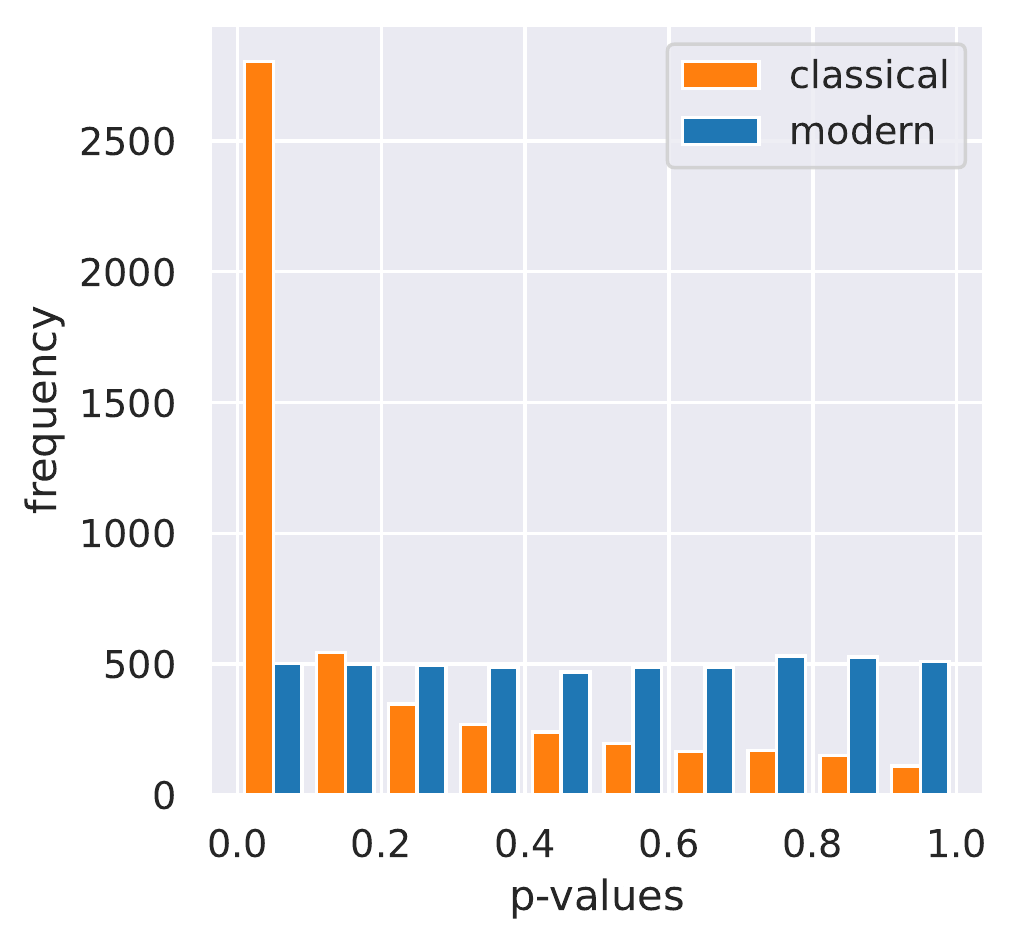}
        \caption{\footnotesize $(n,K)=(4000,2)$}
    \end{subfigure}
    \hfill
    \begin{subfigure}[b]{0.32\textwidth}
        \centering
        \includegraphics[width=\textwidth]{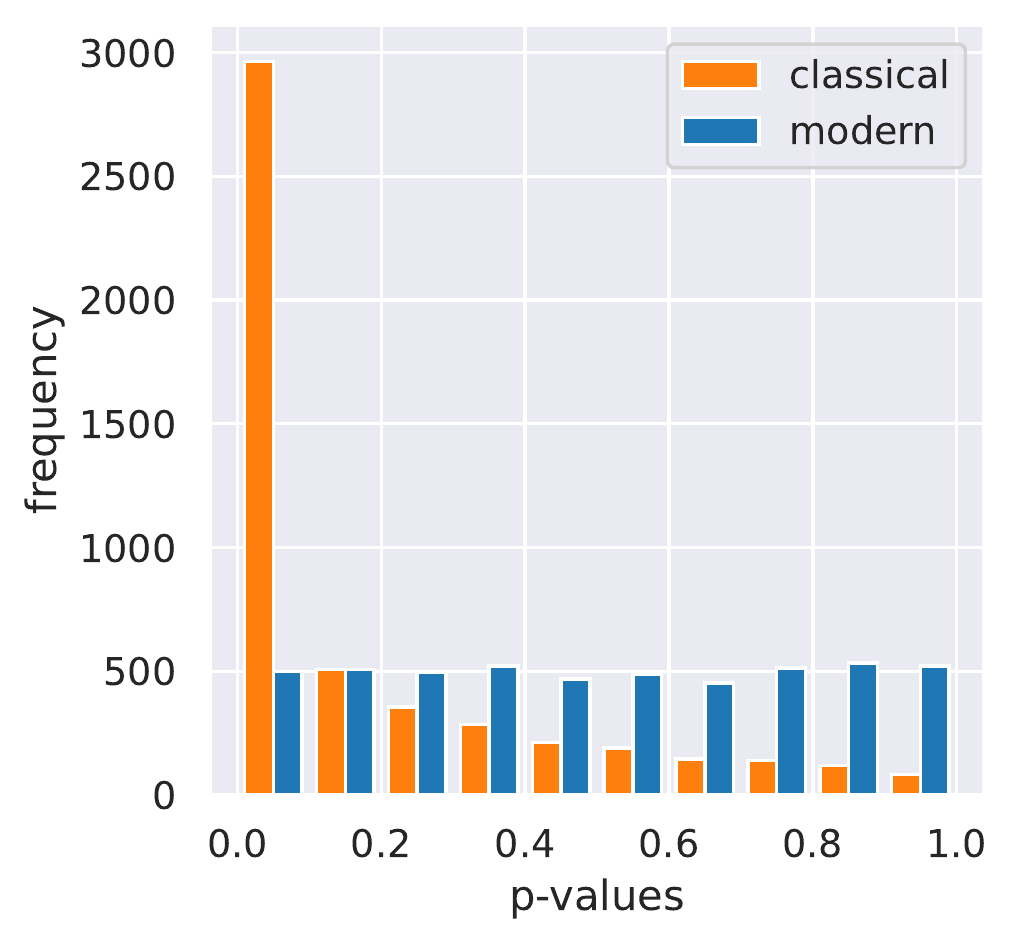}
        \caption{\footnotesize $(n,K)=(5000,3)$}
        \end{subfigure}
    \hfill
    \begin{subfigure}[b]{0.32\textwidth}
        \centering
        \includegraphics[width=\textwidth]{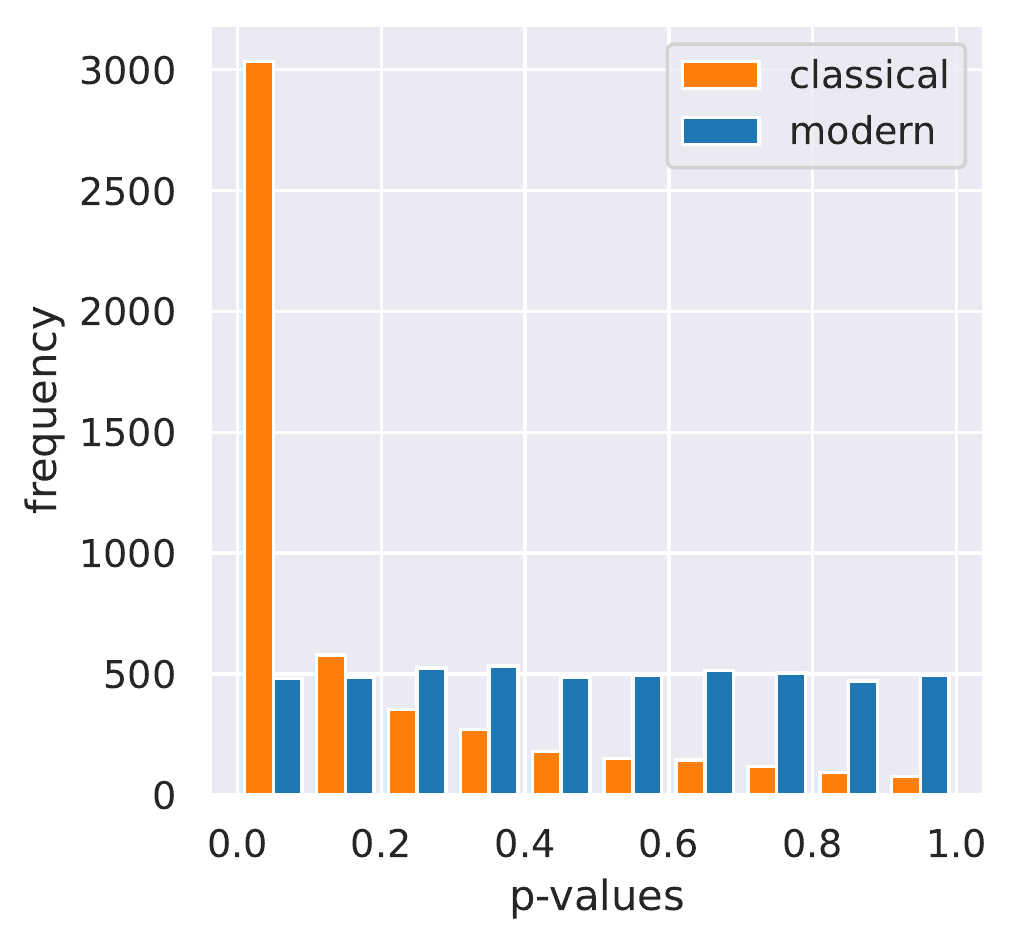}
        \caption{\footnotesize $(n,K)=(6000,4)$}
    \end{subfigure}
    \caption{\textbf{The upper row:} Q-Q plots of the test statistics from \eqref{eq:chi2-A} (in blue) and \eqref{eq:chi2-classical} (in orange) for different $(n,K)$ and $p=1000$ using $\hat\Omega_{jj}$.
    \textbf{The lower row:} histograms of p-values from classical test and our test for different $(n,K)$ and $p=1000$ using $\hat\Omega_{jj}$.}
    \label{fig:qqplots-estimated}
\end{figure}

\begin{figure}[H]
    \centering
    \begin{subfigure}[b]{0.32\textwidth}
        \centering
        \includegraphics[width=\textwidth]{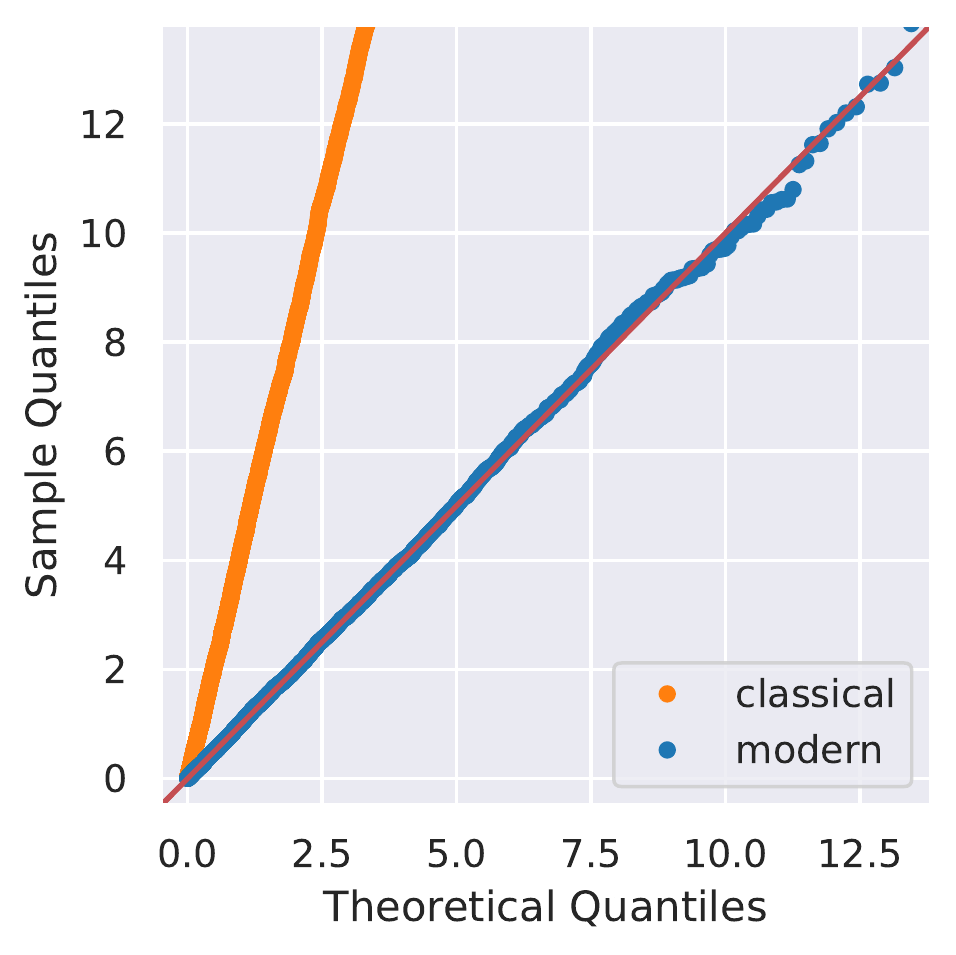}
    \end{subfigure}
    \hfill
    \begin{subfigure}[b]{0.32\textwidth}
        \centering
        \includegraphics[width=\textwidth]{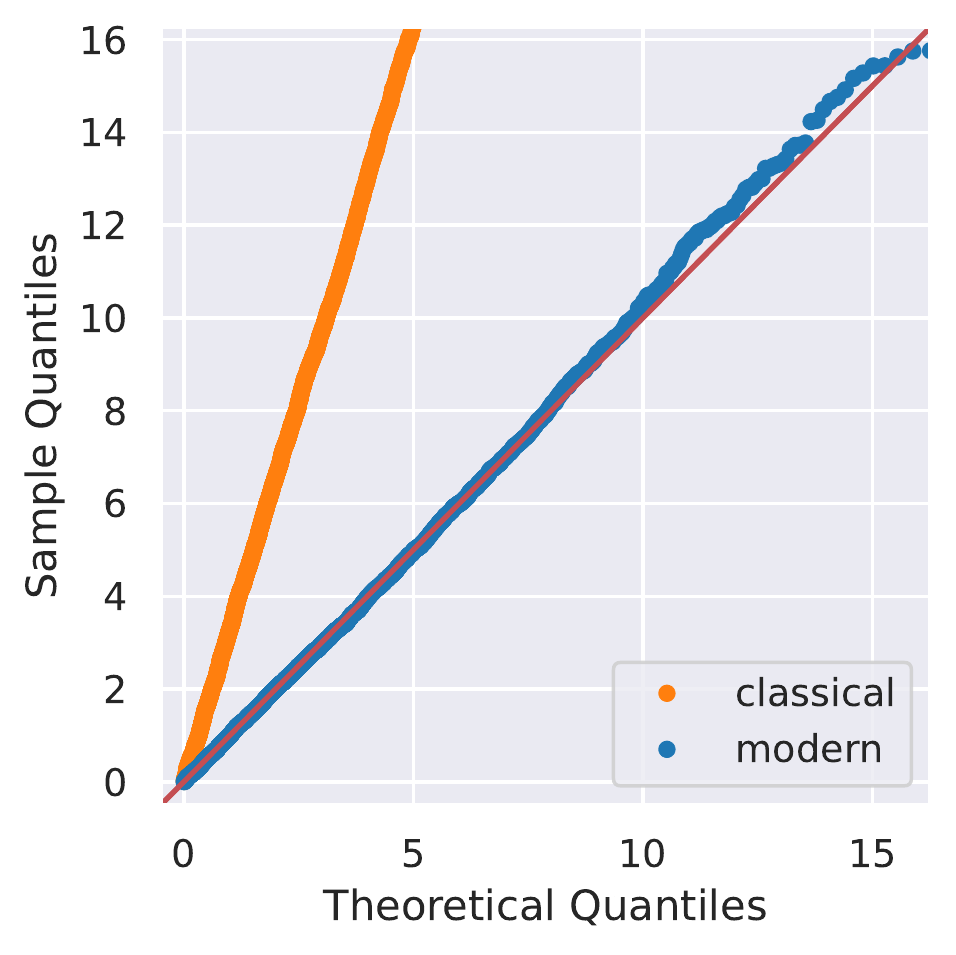}
    \end{subfigure}
    \hfill
    \begin{subfigure}[b]{0.32\textwidth}
        \centering
        \includegraphics[width=\textwidth]{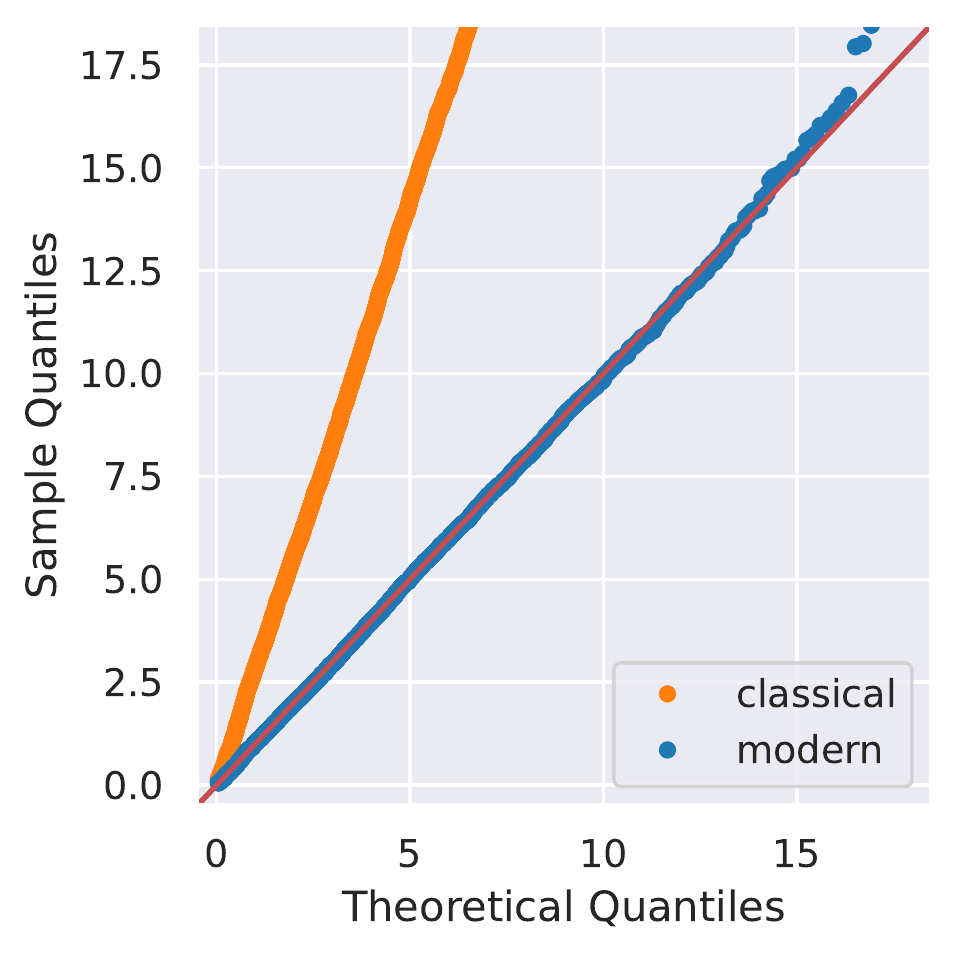}
    \end{subfigure}
    \\
    \begin{subfigure}[b]{0.32\textwidth}
        \centering
        \includegraphics[width=\textwidth]{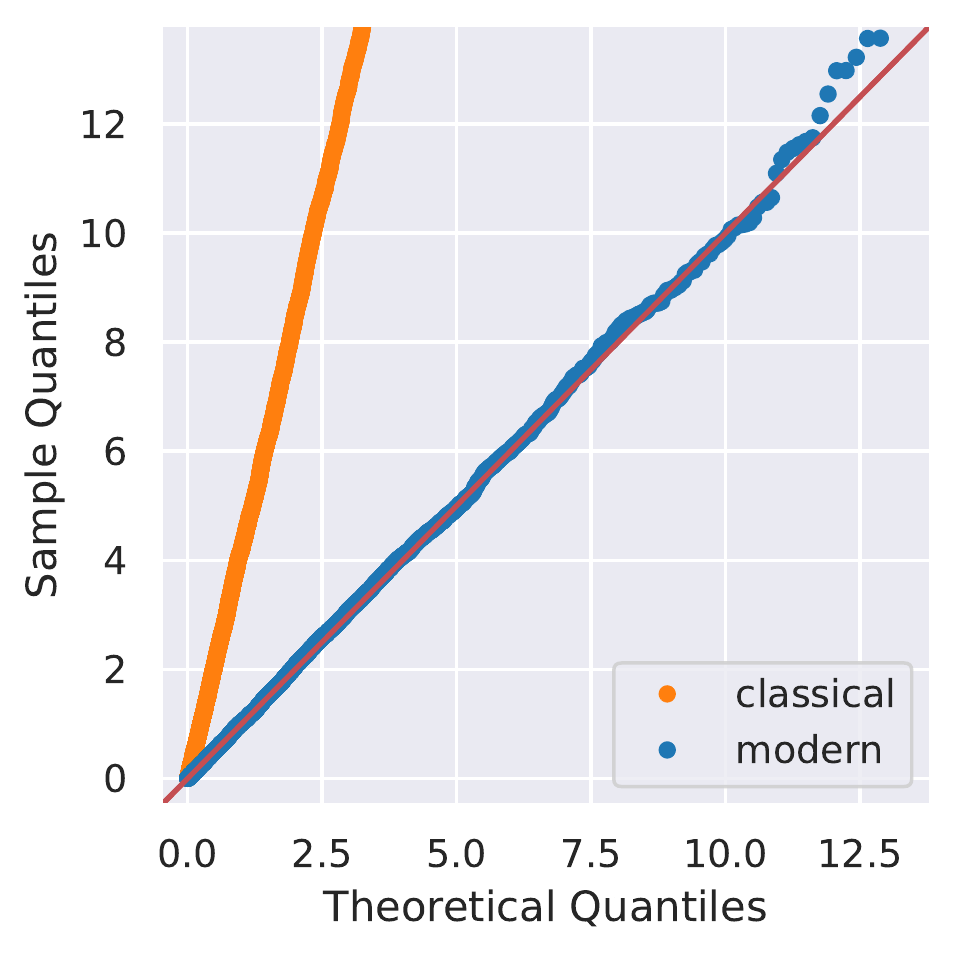}
        \caption{\footnotesize $(n,K)=(4000,2)$}
    \end{subfigure}
    \hfill
    \begin{subfigure}[b]{0.32\textwidth}
        \centering
        \includegraphics[width=\textwidth]{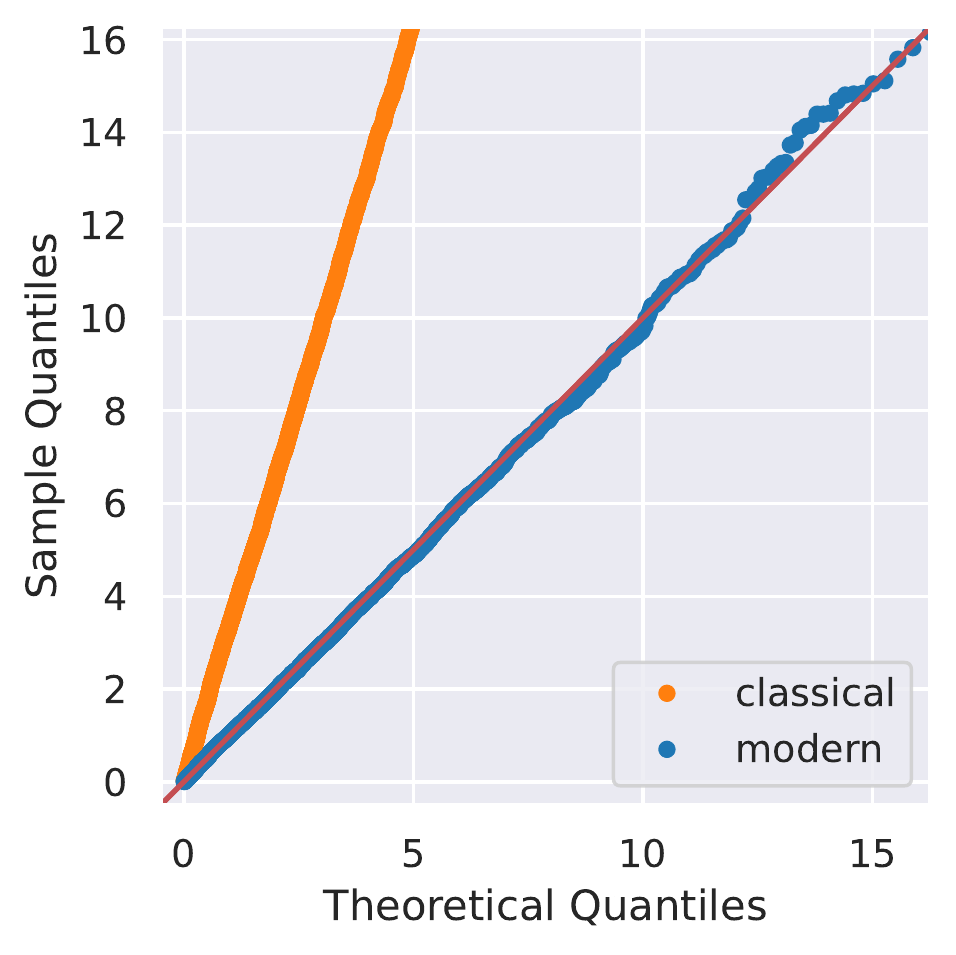}
        \caption{\footnotesize $(n,K)=(5000,3)$}
    \end{subfigure}
    \hfill
    \begin{subfigure}[b]{0.32\textwidth}
        \centering
        \includegraphics[width=\textwidth]{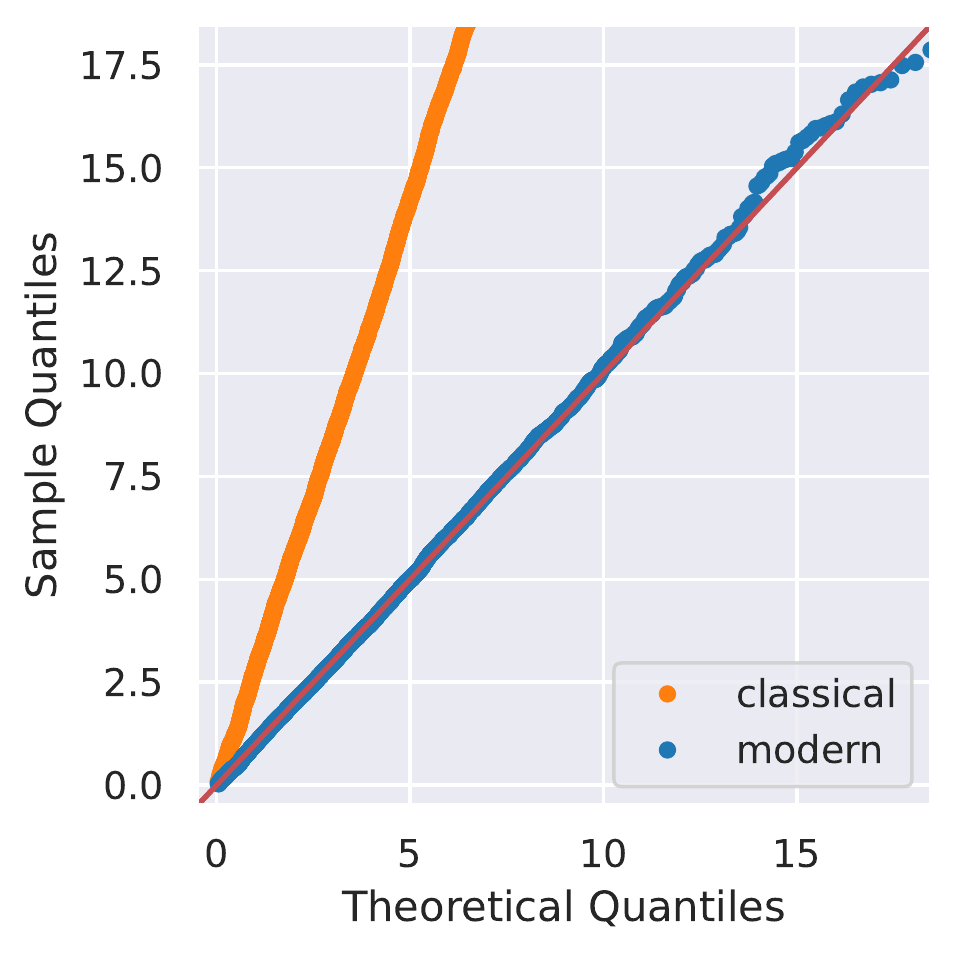}
        \caption{\footnotesize $(n,K)=(6000,4)$}
    \end{subfigure}
    \caption{
    Q-Q plots of the test statistics from \eqref{eq:chi2-A} (in blue) and \eqref{eq:chi2-classical} (in orange) for different $(n,K)$ and $p=1000$ using $\hat\Omega_{jj}$.
    \textbf{The upper row:} covariates are sampled from Rademacher distribution. 
    \textbf{The lower row:} covariates are sampled from distribution of SNPs.}
    \label{fig:qqplots-non-Gaussian}
\end{figure}

\begin{figure}[H]
    \centering
    \begin{subfigure}[b]{0.32\textwidth}
        \centering
        \includegraphics[width=\textwidth]{scatter_Gaussian_true_n2000_p600_q1.pdf}
        \caption{\footnotesize $q=1$}
    \end{subfigure}
    \hfill
    \begin{subfigure}[b]{0.32\textwidth}
        \centering
        \includegraphics[width=\textwidth]
        {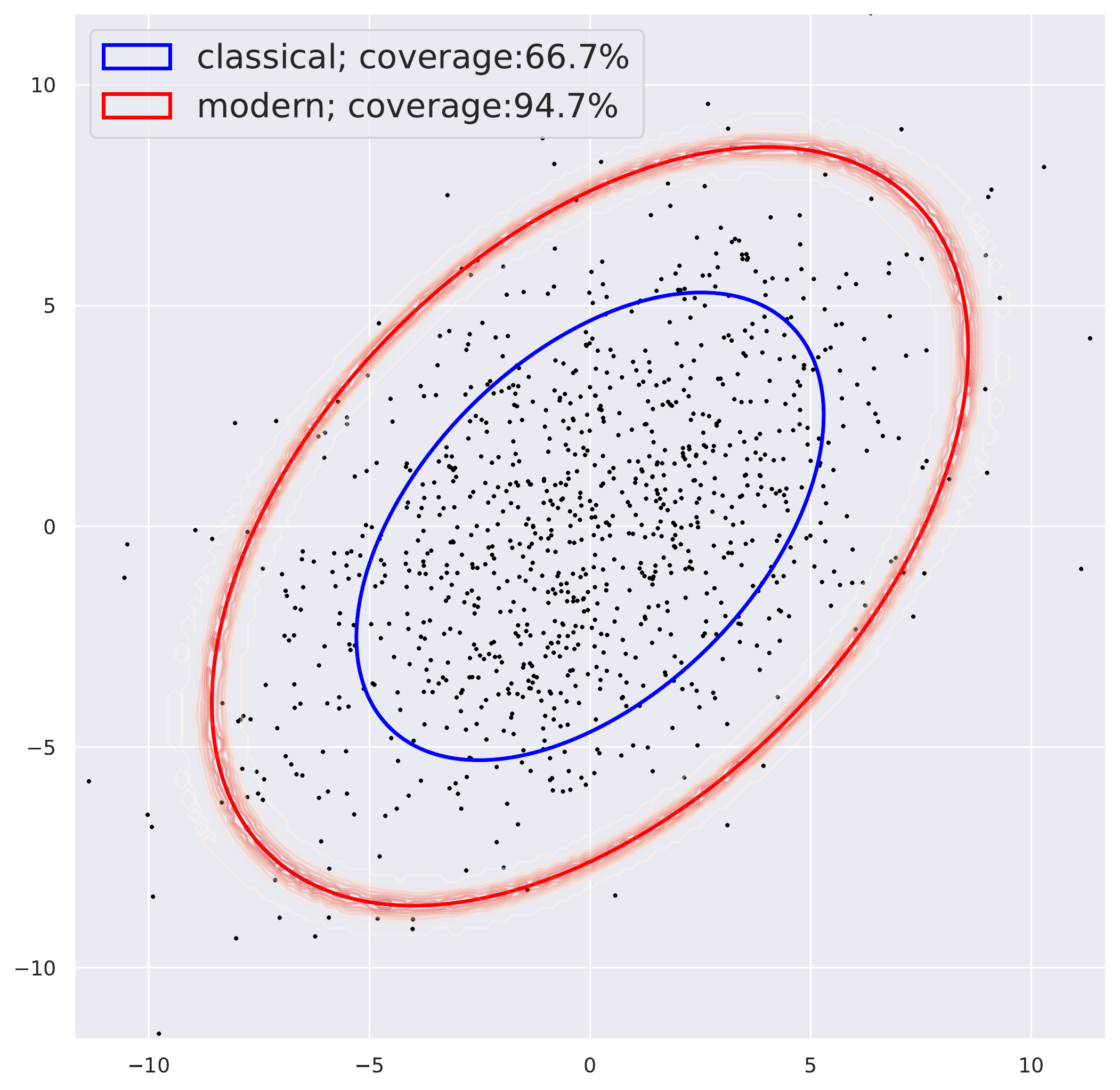}
        \caption{\footnotesize $q=2$}
        \end{subfigure}
    \hfill
    \begin{subfigure}[b]{0.32\textwidth}
        \centering
        \includegraphics[width=\textwidth]
        {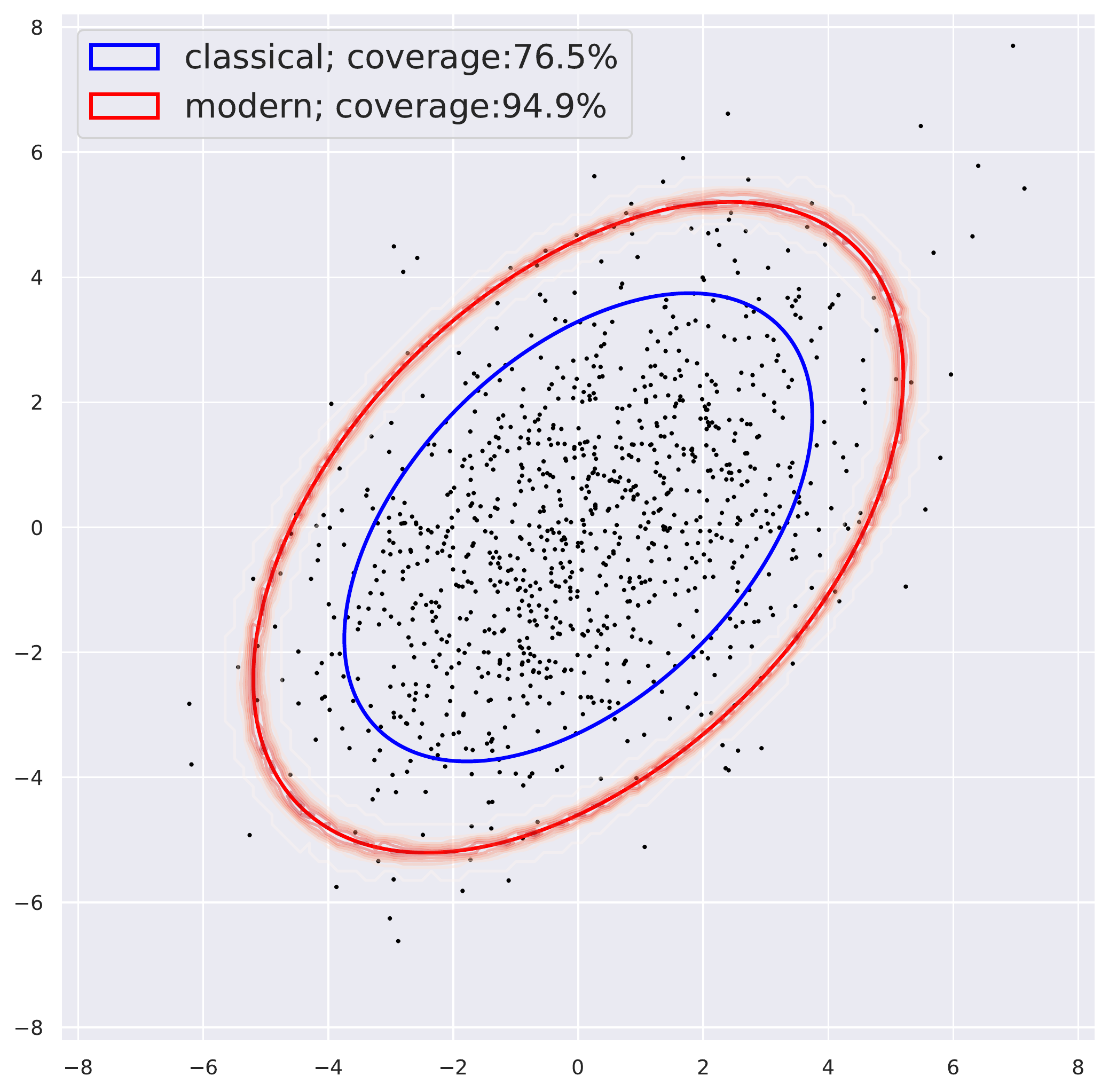}
        \caption{\footnotesize $q=4$}
    \end{subfigure}
    \caption{Scatter plot of pairs $(\sqrt{n}\hat A_{j1}, \sqrt{n}\hat A_{j2})$ with the same data generating process as in \Cref{fig:scatter-plot} (a) except using different $q$. 
    }
    \label{fig:scatter_q}
\end{figure}


\section{Proof of \Cref{thm:normal-K+1}}\label{proof:K+1}
Before proving \Cref{thm:normal-K+1}, we present another parametrization of the multinomial logistic regression model. 
The asymptotic theory of MLE for this new parametrized multinomial logistic model will be used to prove \Cref{thm:normal-K+1}. 
\subsection{Another parametrization of multinomial logistic regression}\label{sec:model-K}
Recall the symbol ``$\in$" is used in \eqref{barB} to emphasize that the minimizer $\hat\B$ in \eqref{barB} is not unique: if $\hat\B$ is a minimizer of \eqref{barB}
then $\hat\B - b\bm1_{K+1}^T$ is also a minimizer of \eqref{barB}, 
for any $b\in \R^p$ and the all-ones vector $\bm 1_{K+1}$ in $\R^{K+1}$.

Besides the log-odds model \eqref{eq:log-odds}, here we consider another identifiable parametrization of multinomial logistic regression, whose unknown parameter, denoted by $B^*$, is in $\R^{p\times K}$. 
\paragraph{Orthogonal complement.}
To obtain an identifiable multinomial logistic regression model from \eqref{model:over-specified}, 
we consider the symmetric constraint 
$\B^* \bm{1} = \sum_{k=1}^{K+1} \B^* e_k = \bm{0}$ as in \citep{zhu2004classification}, thus
$\B^* = \B^* (I_{K+1} - \frac{\bm 1\bm 1^T}{K+1})$, where $\bm 1$ is the all-ones vector in $\R^{K+1}$. 
Let $Q\in \R^{(K+1)\times K}$ be any matrix such that
\begin{equation}
    \label{Q}
I_{K+1} - \tfrac{1}{K+1} \bm 1 \bm 1^T
= QQ^T,
\qquad Q^TQ = I_K.
\end{equation}
We fix one choice of $Q$ satisfying \eqref{Q} throughout this supplement.
Let $B^* = \B^* Q$, then $\B^* = B^* Q^T$ and
the model \eqref{model:over-specified} can be parameterized using $B^*$ as 
\begin{equation}\label{model:B}
    \P(\y_{ik} = 1 | x_i) 
    = 
    \frac{\exp(x_i^T B^* Q e_k)}{\sum_{k'=1}^{K+1} \exp(x_i^T B^* Q e_{k'})}, \quad k\in\{1,2, \ldots, K+1\} . 
\end{equation}

The multinomial logistic MLE of $B^*$ in \eqref{model:B} is given by 
\begin{equation}\label{eq:mle}
    \textstyle
	\hat B = \argmin_{B\in \R^{p\times K}} \sum_{i=1}^n L_i (B^T x_i),
\end{equation} 
where 
$L_i: \R^K\to \R$ is defined by $L_i(u) = \L_i (Qu) \text{ for all } u\in \R^K.$
By this construction, 
we have $\hat B = \hat\B Q$ for any minimizer $\hat\B$ of \eqref{barB}.
Furthermore, by the chain rule using
the expressions \eqref{eq:g_bar-H_bar},
the gradient and Hessian of $L_i$ evaluated at $\hat B^T x_i$ are
\begin{equation}\label{eq:g,H}
    g_i 
    := \nabla L_i(\hat B^T x_i) 
    = Q^T\g_i, \qquad 
    H_i 
    := \nabla^2 L_i(\hat B^T x_i) 
    = Q^T \H_i Q. 
\end{equation}

Throughout, we use serif upright letters to denote quantities defined
on the unidentifiable parameter space $\R^{p\times(K+1)}$:
$$
\B^*, \hat\B\in\R^{p\times(K+1)},
\quad
\L_i:\R^{K+1}\to\R,
\qquad
\hat\p_i,\y_i,\g_i\in\R^{K+1},
\qquad
\H_i\in\R^{(K+1)\times (K+1)}
$$
and the normal italic font to denote analogous quantities
for the identifiable parameter space $\R^{p\times K}$:
$$B^*, \hat B\in\R^{p\times K},
\qquad
L_i:\R^K\to\R,
\qquad
g_i\in\R^K, 
\qquad
H_i\in\R^{K\times K}.$$

\Cref{thm:normal} provides the asymptotic normality and the chi-square approximation of null MLE coordinates in high-dimensions where $n,p\to\infty$ with the ratio $n/p$ converging to a finite limit. 
\begin{theorem}[Proof is given on page~\pageref{sec:application_multinomial}]
    \label{thm:normal}
    Let \Cref{assu:X,assu:Y,assu:MLE} be fulfilled. {Assume that either \Cref{assu:one_hot} or \Cref{assu:one_hot_q} holds.}
    Then for any $j\in[p]$
    such that $H_0$ in \eqref{H0} holds,
   
    \begin{equation}\label{eq:B-Normal}
        \sqrt{n}\Omega_{jj}^{-1/2} \Bigl(\frac 1n\sum_{i=1}^n g_i g_i^T\Bigr)^{-1/2} \Bigl(\frac1n\sum_{i=1}^n V_i \Bigr) \hat B^T e_j \limd N(0, I_K),
    \end{equation}
    where 
    $V_i = H_i - (H_i \otimes x_i^T) [\sum_{l=1}^n H_l \otimes (x_lx_l^T)]^{-1}
    (H_i  \otimes x_i)$.

    A direct consequence of \eqref{eq:B-Normal} is the $\chi^2$ result, 
    \begin{equation}
        \label{eq:B-chi2}
    \|\sqrt{n}\Omega_{jj}^{-1/2} \bigl(\frac 1n\sum_{i=1}^n g_i g_i^T\bigr)^{-1/2} \Bigl(\frac 1 n \sum_{i=1}^n V_i \Bigr) \hat B^T e_j \|^2 \limd 
    \chi^2_K. 
    \end{equation}
\end{theorem}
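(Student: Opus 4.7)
\textbf{Overall strategy.} The plan is to realize \Cref{thm:normal} as a specialization of the general-loss result \Cref{thm:general_Sigma}, which handles asymptotic normality of MLE coordinates for any sufficiently regular convex loss and arbitrary positive-definite $\Sigma$. \Cref{thm:general_Sigma} itself is obtained from \Cref{thm:general_identity} (the isotropic case $\Sigma=I_p$) by the rotational reparametrization $X\mapsto X\Sigma^{-1/2}$, $B\mapsto \Sigma^{1/2}B$, which absorbs the factor $\Omega_{jj}^{-1/2}$ on the left-hand side of \eqref{eq:B-Normal}. So the task reduces to (i) proving the isotropic version via a Gaussian second-order Stein/Poincar\'e-type argument, and then (ii) checking that the cross-entropy loss with MLE $\hat B$ fits the general framework.

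\textbf{The isotropic argument.} Assume $\Sigma=I_p$, so that under $H_0$ the null column $Xe_j$ is $N(\bm 0_n,I_n)$ and independent of $(X_{-j},Y)$. View $\hat B^Te_j\in\R^K$ as a function of $Xe_j$ with everything else frozen. By implicit differentiation of the first-order KKT condition $\sum_{i=1}^n x_i g_i^T=\bm 0$ (\Cref{lem:dot-g}), one computes the Jacobian of $Xe_j \mapsto \hat B^Te_j$ and of $Xe_j\mapsto g_i$. The key algebraic identity that pops out of the block-matrix inversion is
\[
\frac{1}{n}\sum_{i=1}^n V_i
= \frac{1}{n}\sum_{i=1}^n\Bigl(H_i-(H_i\otimes x_i^T)\bigl[\textstyle\sum_l H_l\otimes x_lx_l^T\bigr]^{-1}(H_i\otimes x_i)\Bigr),
\]
which is precisely the matrix appearing in \eqref{eq:B-Normal}. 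A second-order Gaussian Poincar\'e/Stein identity (\Cref{lem:chi2}) applied to the vector $\tfrac{1}{\sqrt n}(\tfrac1n\sum_i V_i)\hat B^Te_j$ as a function of the standard Gaussian $Xe_j$ then yields a normal limit with covariance $\tfrac1n\sum_i g_ig_i^T$, after checking that the residual trace-type remainder is $o_{\mathbb P}(1)$. Pre-multiplying by $(\tfrac1n\sum_i g_ig_i^T)^{-1/2}$ pivots to $N(\bm 0_K,I_K)$; taking squared norms gives \eqref{eq:B-chi2}.

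\textbf{Verifying the hypotheses for cross-entropy.} To invoke the general result, I need: (a) a uniform pointwise bound $\|g_i\|_\infty,\|H_i\|_{op}\le C$ (the cross-entropy gradient lies in $[-1,1]^{K+1}$ and the softmax Hessian is bounded, giving this deterministically via \Cref{lem:gradient>0,lem:Hessian>0}); (b) a lower bound on the smallest eigenvalue of $\sum_l H_l\otimes x_lx_l^T$ restricted to the range of $Q$, so that the inverse in $V_i$ is well-defined and stable (this is \Cref{lem:HX}, which combines \Cref{assu:Y} — guaranteeing that each class is observed with $\gamma n$ samples so that the fitted probabilities cannot all pile up in one class — with the boundedness $\|X\hat\B(I-\tfrac{\bm 1\bm 1^T}{K+1})\|_F^2\le n\tau$ from \Cref{assu:MLE}, which keeps $\hat p_{ik}$ bounded away from $0$ and $1$ on a positive fraction of indices); (c) Lipschitz stability of the map $(X,\hat B)\mapsto g_i$ on the relevant high-probability event (\Cref{lem:XtoG-Lipschitz}). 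Once (a)--(c) are in place, \Cref{thm:general_identity} applies verbatim, and the claimed convergence follows. The switch between \Cref{assu:one_hot} and \Cref{assu:one_hot_q} plays no role here, since nothing in (a)--(c) nor in the first-order condition $X^T G=0$ requires $\y_{ik}\in\{0,1\}$: boundedness of $\y_{ik}$ and the constraint $\sum_k\y_{ik}=1$ are enough.

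\textbf{Main obstacles.} The hardest step is item (b) above, namely producing a constant $c>0$ (independent of $n,p$) such that the $K$-fold smallest nonzero eigenvalue of $\tfrac1n\sum_l H_l\otimes x_lx_l^T$ is $\ge c$ with probability tending to one. This is where the balanced-class \Cref{assu:Y}, the bounded-MLE \Cref{assu:MLE}, and Gaussianity of $X$ have to be combined: one argues that on an $\Omega(n)$-sized subset of indices the fitted probabilities are bounded away from $0$ and $1$ uniformly, so $H_i\succeq c' Q Q^T/(K+1)$ on this subset, and then a standard lower tail bound for the smallest eigenvalue of a sub-Gaussian Gram matrix gives the required deterministic lower bound on the restricted Hessian. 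A secondary technical point is that \Cref{lem:chi2} needs the Frobenius norm of the Jacobian of $Xe_j\mapsto \hat B^Te_j$ to concentrate around its expectation; this concentration, combined with the degeneracy of the Hessian along $\bm 1_{K+1}$ (handled cleanly by the $Q$-parametrization of \Cref{sec:model-K}), is what makes the identifiable parametrization in \eqref{eq:mle} more convenient than working directly with $\hat\B$.
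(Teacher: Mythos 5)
Your proposal follows the paper's proof essentially verbatim: the paper proves \Cref{thm:normal} by rewriting the left-hand side as $\Omega_{jj}^{-1/2}(G^TG)^{-1/2}V\hat B^Te_j$ and then verifying conditions (1) and (2) of \Cref{thm:general_Sigma} via \Cref{lem:gradient>0,lem:Hessian>0,lem:HX,lem:XtoG-Lipschitz}, exactly as you do, including the observation that the one-hot versus $q$-repeated distinction is immaterial. The only imprecision is in your sketch of the underlying isotropic result (\Cref{thm:general_identity}), where the vanishing remainder comes not from applying the Stein identity to the single column $Xe_j$ but from averaging the Stein bound over the $p-K$ exchangeable null columns of $Z$; that detail, however, belongs to the proof of the cited general theorem rather than to the proof of \Cref{thm:normal} itself.
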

The proof of \Cref{thm:normal} is deferred to \Cref{sec:preliminary} and \Cref{sec:application_multinomial}. 
In the next subsection, 
we prove \Cref{thm:normal-K+1} using \Cref{thm:normal}. 

\subsection{Proof of \Cref{thm:normal-K+1}}
We restate \Cref{thm:normal-K+1} for convenience.
\mytheorem*

The proof of \Cref{thm:normal-K+1} is a consequence of \Cref{thm:normal}. 
To begin with, we state the following useful lemma. 
\begin{lemma}\label{lem:V-Vbar}
    For $\V_i$ and $V_i$ defined in \Cref{thm:normal,thm:normal-K+1}, we have 
     $V_i = Q^T \V_i Q$. 
\end{lemma}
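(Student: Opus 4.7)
\textbf{Proof plan for \Cref{lem:V-Vbar}.}
The plan is to reduce the identity $V_i = Q^T \V_i Q$ to repeated applications of the Kronecker mixed-product property $(A\otimes B)(C\otimes D)=(AC)\otimes(BD)$, combined with the two defining relations $Q^TQ=I_K$ and $QQ^T=I_{K+1}-\tfrac{\bm 1\bm 1^T}{K+1}$ from \eqref{Q}.

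The crucial preliminary observation is that $\H_i \bm 1_{K+1}=\bm 0$ (as $\H_i=\diag(\hat\p_i)-\hat\p_i\hat\p_i^T$ and $\bm 1^T\hat\p_i=1$), so both the row and column spaces of $\H_i$ are contained in the orthogonal complement of $\bm 1_{K+1}$, which equals the column span of $Q$. Hence $\H_i=QQ^T\H_i QQ^T=Q H_i Q^T$ using $H_i=Q^T\H_i Q$ from \eqref{eq:g,H}. The $\H_i$ part of $\V_i$ therefore contributes exactly $Q^T\H_i Q=H_i$, which matches the corresponding term of $V_i$.

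For the second, more involved term, I would first rewrite
\[
\sum_{l=1}^n \H_l\otimes (x_lx_l^T)=\sum_{l=1}^n (QH_lQ^T)\otimes (I_p x_lx_l^T I_p)=(Q\otimes I_p)\Bigl[\sum_{l=1}^n H_l\otimes x_lx_l^T\Bigr](Q^T\otimes I_p),
\]
and note that $Q\otimes I_p$ has orthonormal columns, since $(Q\otimes I_p)^T(Q\otimes I_p)=(Q^TQ)\otimes I_p=I_{Kp}$. Under the working invertibility assumption on $M\defas\sum_{l=1}^n H_l\otimes x_lx_l^T$ (which is the content of \Cref{assu:MLE} together with the full-rank conditions in place of $V_i$), the standard identity $(AMA^T)^\dagger=AM^{-1}A^T$ for $A$ with orthonormal columns and invertible $M$ yields
\[
\Bigl[\sum_{l=1}^n \H_l\otimes x_lx_l^T\Bigr]^\dagger=(Q\otimes I_p)\,M^{-1}\,(Q^T\otimes I_p).
\]
It remains to sandwich this identity between $Q^T(\H_i\otimes x_i^T)$ and $(\H_i\otimes x_i)Q$ and simplify. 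Using $\H_i=QH_iQ^T$ together with mixed product, one obtains $Q^T(\H_i\otimes x_i^T)(Q\otimes I_p)=H_i\otimes x_i^T$ and $(Q^T\otimes I_p)(\H_i\otimes x_i)Q=H_i\otimes x_i$ after repeated collapses $Q^TQ=I_K$. Substituting into the pseudo-inverse expression produces $(H_i\otimes x_i^T)M^{-1}(H_i\otimes x_i)$, which is exactly the subtracted term of $V_i$. Combining with the leading $Q^T\H_iQ=H_i$ term completes the proof.

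The only genuine subtlety is the pseudo-inverse step: one must verify that $M$ is invertible with high probability under the standing assumptions (so that we are not taking $(\cdot)^\dagger$ of a singular matrix in a nontrivial way), or alternatively work on the range of $Q\otimes I_p$ throughout and observe that all the quantities above lie in that subspace because of $\H_i=QH_iQ^T$. Either approach works; the latter avoids any invertibility assumption on $M$ and uses only the identity $(Q\otimes I_p)^T(Q\otimes I_p)=I_{Kp}$ together with range considerations for the Moore--Penrose inverse of $(Q\otimes I_p)M(Q^T\otimes I_p)$.
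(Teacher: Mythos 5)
Your proposal is correct and follows essentially the same route as the paper: both reduce the claim to the Kronecker mixed-product rule together with the single key identity relating $\bigl[\sum_l \H_l\otimes(x_lx_l^T)\bigr]^\dagger$ and $(Q\otimes I_p)\bigl[\sum_l H_l\otimes(x_lx_l^T)\bigr]^{-1}(Q^T\otimes I_p)$, which the paper states as $A[A^T\D A]^{-1}A^T=\D^\dagger$ with $A=Q\otimes I_p$ and proves via the eigendecomposition of $\D$ (whose kernel is spanned by the $\bm 1\otimes e_j$), exactly the orthonormal-columns pseudo-inverse fact you invoke. Your remark on the invertibility of $M$ is well taken but not an extra hypothesis, since the quantity $V_i$ in \Cref{thm:normal} is only defined when that matrix is invertible.
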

\begin{proof}[Proof of \Cref{lem:V-Vbar}]
    Since $H_i = Q^T \H_i Q$, we have 
    \begin{align*}
        V_i =
        & H_i - (H_i \otimes x_i^T) \Bigl[\sum_{i=1}^nH_i\otimes (x_ix_i^T)\Bigr]^{-1} (H_i\otimes x_i) \\
        = & H_i - [(Q^T \H_i Q) \otimes x_i^T] \Bigl[\sum_{i=1}^n
        (Q^T\H_i Q) \otimes (x_ix_i^T)
        \Bigr]^{-1} [(Q^T \H_i Q)\otimes x_i]\\
        = & H_i - Q^T  (\H_i \otimes x_i^T) (Q \otimes I_p) \Bigl[(Q^T \otimes I_p) [\sum_{i=1}^n\H_i \otimes (x_ix_i^T)] (Q \otimes I_p) \Bigr]^{-1} (Q^T \otimes I_p)  (\H_i \otimes x_i) Q\\
        = & Q^T \H_i Q - Q^T (\H_i \otimes x_i^T) \Bigl[\sum_{i=1}^n(\H_i\otimes x_ix_i^T)\Bigr]^{\dagger} (\H_i\otimes x_i) Q\\
        =& Q^T \V_i Q, 
    \end{align*}
    where the penultimate equality is proved as follows. 

    Let $A = Q \otimes I_p$ and $\D = \sum_{i=1}^n\H_i \otimes (x_ix_i^T)$ 
    only in the remaining of this proof.  
    It remains to prove 
    \begin{equation}\label{eq:inverse-dagger}
    A [A^T \D A]^{-1} A^T
    = \D^{\dagger}. 
    \end{equation}
    Since $\H_i \bm1 = 0$, we have $\D (\bm 1 \otimes I_p) =0$. Since $Q^T \bm{1} = 0$ by definition of $Q$, we have 
    $A^T (\bm 1 \otimes I_p) = 0$. 
    If we write the eigen-decomposition of $\D$ as 
    $\D = \sum_{i=1}^{pK} \lambda_i u_i u_i^T$, then $u_i^T (\bm 1\otimes I_p) =0$.
    Hence, with $v_i = A^T u_i$, 
    \begin{align*}
        A^T \D A
        =~& \sum_{i=1}^{pK} \lambda_i v_i v_i^T.
    \end{align*}
    Since 
    $v_i^T v_{i'} 
    = u_i^T A A^T u_{i'} 
    = u_i^T [(I_{K+1} - \tfrac{\bm{1}\bm{1}^T}{K+1}) \otimes I_p] u_{i'} 
    = u_i^T u_{i'} 
    = I(i=i')$, we have 
    \begin{align*}
        A[A^T \D A]^{-1}A^T
        =A \bigl(\sum_{i=1}^{pK} \lambda_i^{-1} v_i v_i^T\bigr) A^T
        =\sum_{i=1}^{pK} \lambda_i^{-1} u_i u_i^T = \D^{\dagger},
    \end{align*}
    where the second equality uses $A v_i = A A^T u_i = u_i$. 
    The proof of \eqref{eq:inverse-dagger} is complete. 
\end{proof}

Now we are ready to prove that \Cref{thm:normal-K+1} is 
a consequence of \Cref{thm:normal}.
\begin{proof}[Proof of \Cref{thm:normal-K+1}]
    
By definition of $\g_i$ and $\V_i$, we have 
$\bm{1}^T \g_i = 0$ and 
$\bm{1}^T \V_i = \bm{0}^T$.
Thus, we have $QQ^T \g_i = \g_i$ and 
$QQ^T \V_i = \V_i$. 
Therefore, we can rewrite the left-hand side of \eqref{eq:normal_K+1} (without $\sqrt{n}\Omega_{jj}^{-1/2}$) as 
\begin{align*}
    &\Bigl(
        \Bigl(
            \frac1n\sum_{i=1}^n \g_i \g_i ^T
            \Bigr)^{1/2} \Bigr)^\dagger
            \Bigl(\frac1n\sum_{i=1}^n \V_i
            \Bigr) \hat\B^T e_j\\
    =~& \Bigl(
    \Bigl(
    \frac1n\sum_{i=1}^n QQ^T \g_i \g_i ^T QQ^T
    \Bigr)^{1/2} \Bigr)^\dagger
    \Bigl(\frac1n\sum_{i=1}^n QQ^T \V_i QQ^T\Bigr)\hat\B^T e_j\\
    =~& \Bigl(
        \Bigl(
        \frac1n\sum_{i=1}^n Q g_i g_i ^T Q^T
        \Bigr)^{1/2} \Bigr)^\dagger
        \Bigl(\frac1n\sum_{i=1}^n Q V_i Q ^T\Bigr)\hat\B^T e_j\\
    =~& Q\Bigl(
        \Bigl(
        \frac1n\sum_{i=1}^n g_i g_i ^T 
        \Bigr)^{1/2} \Bigr)^\dagger Q^T Q
        \Bigl(\frac1n\sum_{i=1}^n  V_i Q ^T\Bigr)\hat\B^T e_j\\
    =~& Q\Bigl(
        \Bigl(
        \frac1n\sum_{i=1}^n g_i g_i ^T 
        \Bigr)^{1/2} \Bigr)^\dagger 
        \Bigl(\frac1n\sum_{i=1}^n  V_i \Bigr)\hat B^T e_j,
\end{align*}
where the first equality uses $QQ^T \g_i = \g_i$ and $QQ^T \V_i = \V_i$,
the second equality uses 
$g_i = Q^T \g_i$ and $V_i = Q^T \V_i Q$ from \Cref{lem:V-Vbar}, 
the third equality follows from the same argument of \eqref{eq:inverse-dagger},
and the last equality uses $Q^T Q = I_K$ and 
$\hat B = \hat \B Q$. 

Therefore, \Cref{thm:normal} implies that the limiting covariance for the left-hand side of \eqref{eq:normal_K+1}
is $QQ^T = I_K - \tfrac{\bm{1}\bm{1}^T}{K+1}$. 
This completes the proof. 
\end{proof}

\section{Proof of \Cref{thm:normal-A}}\label{proof:A}
We restate \Cref{thm:normal-A} for convenience. 
\mytheoremA*

The proof is a direct consequence of \Cref{thm:normal-K+1}. 
\begin{proof}[Proof of \Cref{thm:normal-A}]
    By definition of $\hat A$ in \eqref{eq:relationship_A_B}, we have 
    $\hat A = \hat\B (I_K, -\bm{1}_K)^T$ and 
    $$
    \hat A ~ (I_K, \bm{0}_K)
    =\hat\B (I_K, -\bm{1}_K)^T (I_K, \bm{0}_K)
    =\hat\B (I_{K+1} - e_{K+1}\bm{1}^T)
    =\hat\B - (\hat\B e_{K+1})\bm{1}^T,
    $$
    which is of the form $\hat \B - b\bm{1}^T$ with $b = \hat\B e_{K+1}$. 
    Therefore, $\hat A ~ (I_K, \bm{0}_K)$ is also a solution of \eqref{barB}. 
    Taking $\hat \B$ 
    in \Cref{thm:normal-K+1} to be 
    $\hat A ~ (I_K, \bm{0}_K) = \hat A R^T$ gives the desired $\chi^2$ result \eqref{eq:chi2-A} and 
    \begin{equation} 
    \sqrt{n}\Omega_{jj}^{-1/2}
    \Bigl(
    \Bigl(
    \frac1n\sum_{i=1}^n \g_i \g_i^T 
    \Bigr)^{1/2} \Bigr)^\dagger
    \Bigl(\frac1n\sum_{i=1}^n \V_i\Bigr)
    R \hat A^T e_j
    \limd  N\Bigl(0, I_{K+1}-\tfrac{\bm{1}\bm{1}^T}{K+1}\Bigr). 
\end{equation}

Multiplying $(R^T (I_{K+1}-\tfrac{\bm{1}\bm{1}^T}{K+1}) R)^{-1/2} R^T$ to 
the left of the above display gives 
the desired normality result \eqref{normal-A} by observing
$(R^T (I_{K+1}-\tfrac{\bm{1}\bm{1}^T}{K+1}) R)^{-1/2} = (I_K + \frac{\bm{1}_K\bm{1}_K^T}{\sqrt{K+1} + 1}). 
$
This completes the proof. 
\end{proof}

\section{Preliminary results for proving \Cref{thm:normal}}
\label{sec:preliminary}
\subsection{Results for general loss functions}\label{sec:general-loss}
In this subsection, we will work under the following assumptions 
with a general convex loss function.
Later in \Cref{sec:application_multinomial}, we will apply 
the general results of this subsection to the multinomial logistic loss
discussed in \Cref{sec:model-K}.

\begin{assumption}
    \label{assum:general_model}
    Suppose we have data $(Y, X)$, where 
    $Y\in\R^{n\times (K+1)}$ with rows $(\y_1,...,\y_n)$, and
    $X\in\R^{n\times p}$ has \iid rows $(x_1,...,x_n)$ with $x_i\sim N(\bm{0},\Sigma)$ and invertible $\Sigma$. 
    The observations 
    $(\y_i, x_i)_{i\in[n]}$
    are \iid and $\y_i$ has the form $\y_i=f(U_i,x_i^T B^*)$ for some deterministic
    function $f$, deterministic $B^* \in\R^{p\times K}$,
    and latent random variable $U_i$ independent of $x_i$.
    Assume $p/n\le \delta^{-1}<1$.
\end{assumption}

\begin{assumption}
    \label{assum:general_loss}
    Given data $(Y, X)$, 
    consider twice continuously differentiable
    and strictly convex loss functions $(L_i)_{i\in[n]}$
    with each $L_i:\R^K\to\R$ depending on $\y_i$ but not
    on $x_i$.
\end{assumption}
    Provided that the following minimization problem admits a solution,
    define
    \begin{align*}
        \hat B(Y,X) = \argmin_{B\in \R^{p\times K}} \sum_{i=1}^n L_i (B^T x_i).
    \end{align*}
    Define for each $i\in[n]$, 
    \begin{align*}
        g_i(Y,X) = \nabla L_i(\hat B(Y,X)^T x_i), \quad
        H_i(Y,X) = \nabla^2 L_i (\hat B(Y,X)^T x_i),
    \end{align*}
    so that $g_i(Y,X)\in\R^K$ and $H_i(Y,X)\in\R^{K\times K}$.
    Define 
    \begin{align*}
        G(Y,X)&=\sum_{i=1}^n e_i g_i(Y,X)^T,\\
    V(Y, X)&= \sum_{i=1}^n \Bigl(
    H_i(Y,X) - 
    (H_i(Y,X) \otimes x_i^T)
    \Bigl[\sum_{l=1}^n H_l(Y,X) \otimes (x_lx_l^T)\Bigr]^{\dagger}
    (H_i(Y,X)\otimes x_i)\Bigr), 
    \end{align*}
    so that $G(Y,X)\in\R^{n\times K}$ and $V(Y,X)\in\R^{K\times K}$.
If the dependence on data $(Y,X)$ is clear from context, we will simply write $\hat B$, $g_i$, $H_i$
$G$, and $V$. 
\begin{theorem}
    \label{thm:general_Sigma}
    Let \Cref{assum:general_loss,assum:general_model} be fulfilled.
    Let $c_*,m_*,m^*,K$ be positive constants independent of $n,p$.
    Let $U^*\subset\R^{p\times (K+1)} \times \R^{n\times p}$ be an open set
    satisfying
    \begin{itemize}
        \item[(1)]
        If $\{(Y,X)\in U^*\}$, then the minimizer $\hat B(Y,X)$ in \Cref{assum:general_loss} exists, 
        $H_i \preceq I_K$ for each $i\in [n]$,
        $
        \frac 1 n \sum_{i=1}^n H_i(Y,X) \otimes (x_i x_i^T) \succeq
        c_* (I_K\otimes \Sigma)$
        and
        $m_* I_K\preceq \frac 1 n G(Y,X)^TG(Y,X) \preceq m^* I_K$. 

        \item[(2)]
            For any $\{(Y,X),(Y,\tilde X)\}\subset U^*$, 
            $\|G(Y,X) - G(Y,\tilde X)\|_F\le L \|(X-\tilde X)\Sigma^{-1/2}\|_F$ holds for some positive constant $L$.
    \end{itemize}
    Then for any $j\in[p]$ such that $e_j^TB^*=\bm{0}^T_K$, there exists
    a random variable $\xi\in\R^K$ such that
    $$\E\bigl[
    I\{(Y,X)\in U^*\}
    \bigl\|
    \tfrac{(G^TG)^{-1/2}V\hat B^Te_j}{\sqrt{\Omega_{jj}}}
    - 
    \xi
    \bigr\|^2
    \bigr]
    \le \tfrac{C}{p-K},
    $$
    and
    $\P(\|\xi\|^2>\chi^2_K(\alpha))\le\alpha$ for all $\alpha\in(0,1)$, C is a positive constant depending on $(c_*, m_*, m^*, K, L)$ only. 
    If additionally $\P((Y,X)\in U^*)\to 1$, then $\xi$ in the previous display satisfies $\xi\limd N(\bm{0},I_K)$ and 
    $$
    \frac{(G^TG)^{-1/2}V \hat B^T e_j}{\sqrt{\Omega_{jj}}} \limd N(\bm{0}, I_K). 
    $$
 \end{theorem}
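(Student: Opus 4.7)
The plan is to obtain \Cref{thm:general_Sigma} from \Cref{thm:general_identity} via a whitening reduction. Set $\tilde X = X \Sigma^{-1/2}$ so that its rows are \iid $N(\bm 0, I_p)$, and $\tilde B^* = \Sigma^{1/2} B^*$. Since $x_i^T B^* = \tilde x_i^T \tilde B^*$, the pair $(Y, \tilde X)$ satisfies \Cref{assum:general_model} with identity covariance, and the conditional law of each $\y_i$ is unchanged. The null hypothesis $e_j^T B^* = \bm 0_K^T$ becomes $w_j^T \tilde B^* = \bm 0_K^T$ in the whitened coordinates, where $w_j := \Sigma^{-1/2} e_j$ satisfies $\|w_j\|^2 = \Omega_{jj}$. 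A further orthogonal rotation of $\R^p$ aligning $w_j/\|w_j\|$ with a standard basis vector then puts us in a position to invoke \Cref{thm:general_identity}; this rotation is legitimate because the isotropic Gaussian design is rotationally invariant and every condition defining $U^*$ transforms covariantly under such rotations.

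Next I would check that all pivotal quantities are preserved under this reduction. Because $\tilde x_i^T B = x_i^T (\Sigma^{-1/2} B)$ for any $B$, the whitened minimizer is $\tilde B = \Sigma^{1/2} \hat B$, whence $\tilde B^T w_j = \hat B^T e_j$, and the evaluations $\tilde g_i = g_i$, $\tilde H_i = H_i$ coincide; in particular $G$ is unchanged. For $V$, the identity
$$ H_l \otimes \tilde x_l \tilde x_l^T = (I_K \otimes \Sigma^{-1/2})\bigl(H_l \otimes x_l x_l^T\bigr)(I_K \otimes \Sigma^{-1/2}) $$
combined with the observation that the kernel of $\sum_l H_l \otimes x_l x_l^T$ has the product form $\mathcal N \otimes \R^p$ (for $\mathcal N$ the common kernel of the $H_l$'s) and is therefore invariant under $I_K \otimes \Sigma^{\pm 1/2}$, yields
$$\Bigl[\textstyle\sum_l H_l \otimes \tilde x_l \tilde x_l^T\Bigr]^\dagger = (I_K \otimes \Sigma^{1/2})\Bigl[\sum_l H_l \otimes x_l x_l^T\Bigr]^\dagger (I_K \otimes \Sigma^{1/2}).$$
A direct substitution then gives $V(Y, \tilde X) = V(Y, X)$, so that the pivot is invariant: $\tfrac{(G^TG)^{-1/2} V \hat B^T e_j}{\sqrt{\Omega_{jj}}} = \tfrac{(G^TG)^{-1/2} V \tilde B^T w_j}{\|w_j\|}$.

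It remains to transport the event $U^*$: the Hessian lower bound becomes $\frac1n \sum_i H_i \otimes \tilde x_i \tilde x_i^T \succeq c_* I_{Kp}$, the spectral bounds on $G^T G/n$ are unchanged, and the Lipschitz assumption reduces to $\|G(Y, \tilde X) - G(Y, \tilde X')\|_F \le L\|\tilde X - \tilde X'\|_F$, which are precisely the hypotheses expected by \Cref{thm:general_identity} in isotropic coordinates. Applying that theorem supplies a random vector $\xi \in \R^K$ with the $L^2$ approximation bound and the tail inequality $\P(\|\xi\|^2 > \chi^2_K(\alpha)) \le \alpha$; both conclusions transfer verbatim to the original pivot through the invariance established above. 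Under the additional assumption $\P(U^*) \to 1$, the limit $\xi \limd N(\bm 0, I_K)$ from \Cref{thm:general_identity} combined with Slutsky's lemma yields the asserted distributional convergence of $\tfrac{(G^TG)^{-1/2} V \hat B^T e_j}{\sqrt{\Omega_{jj}}}$.

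The main bookkeeping obstacle is the invariance $V(Y, \tilde X) = V(Y, X)$, because the Moore--Penrose pseudo-inverse does not commute with arbitrary similarity transformations. The commutation used here depends crucially on the product structure $\mathcal N \otimes \R^p$ of the relevant kernel, which is the sole place where the specific Kronecker form of $\sum_l H_l \otimes x_l x_l^T$ enters. Once that structural lemma is in place, the rest of the argument is a routine translation of the identity-covariance result through an invertible change of coordinates.
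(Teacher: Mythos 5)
Your reduction is the same one the paper uses (whiten, rotate, invoke \Cref{thm:general_identity}), but as written it skips the one step where the null hypothesis actually does any work, and without that step the central hypothesis of \Cref{thm:general_identity} is not verified. That theorem requires $(\y_1,\dots,\y_n)$ to be independent of the first $p-K$ columns of $Z$. Aligning $w_j=\Sigma^{-1/2}e_j$ with $e_1$ does not give you this: you must choose the orthogonal matrix so that it \emph{simultaneously} sends the left singular vectors $u_1,\dots,u_K$ of $\Sigma^{1/2}B^*$ into $\mathrm{span}(e_{p-K+1},\dots,e_p)$, so that the rotated coefficient matrix $A^*=\tilde P\Sigma^{1/2}B^*$ has its first $p-K$ rows equal to zero; only then does $ZA^*=XB^*$ — and hence $Y=f(U,XB^*)$ — depend solely on the last $K$ columns of $Z$, which are independent of the first $p-K$. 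Such a simultaneous alignment of $w_j\mapsto e_1$ and $u_k\mapsto e_{p-K+k}$ exists precisely because $e_j^TB^*=\bm{0}_K^T$ forces $w_j\perp u_k$ for every $k$ with $s_k\neq 0$ (the remaining $u_k$ can be chosen orthogonal to $w_j$). This orthogonality is the only place the null hypothesis enters the reduction; your sketch never states it, and "the conditional law of $\y_i$ is unchanged" is not a substitute, since exchanging a generic column of $Z$ for an independent copy (the mechanism inside \Cref{thm:general_identity}) would change $ZA^*$ if that column carried signal.

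Conversely, the issue you single out as the main obstacle — commuting the Moore--Penrose pseudo-inverse past $I_K\otimes\Sigma^{\pm 1/2}$ — is not actually an obstacle here: condition (1) gives $\frac1n\sum_{l} H_l\otimes(x_lx_l^T)\succeq c_*(I_K\otimes\Sigma)\succ 0$ on $U^*$, so on the only event where the conclusion is asserted the matrix is invertible, the pseudo-inverse is the ordinary inverse, and $V(Y,\tilde X)=V(Y,X)$ follows by direct substitution. (Your claimed kernel structure $\mathcal N\otimes\R^p$ need not hold for a general sum of Kronecker products, but the point is moot.) The remaining bookkeeping — $g_i$, $H_i$, $G$ unchanged, $\tilde B^Tw_j=\hat B^Te_j$ with $\|w_j\|^2=\Omega_{jj}$, and the transport of the spectral and Lipschitz conditions — matches the paper's argument.
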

The proof of \Cref{thm:general_Sigma} is given in next subsection. 
\subsection{Proof of \Cref{thm:general_Sigma}}
In this subsection and next subsection, we will slightly abuse the notations $A^*$ and $\hat A$, which have different definitions than the definitions in the main text.

Let $\Sigma^{1/2} B^* = \sum_{k=1}^K s_k u_k  v_k^T$ be the singular value decomposition of $\Sigma^{1/2} B^*$, where $u_1, ..., u_K$ are the left singular vectors and $v_1,...,v_k$ the right singular vectors.
If $\Sigma^{1/2}B^*$ is of rank strictly less than $K$, we allow some $s_k$ to
be equal to 0 so that $\Sigma^{1/2} B^* = \sum_{k=1}^K s_k u_k  v_k^T$ still holds with orthonormal $(u_1,...,u_K)$ and orthonormal $(v_1,...,v_K)$.
We consider an orthogonal matrix $\tilde P\in \R^{p\times p}$ such that 
\begin{equation}\label{eq:rotation-P}
    \tilde P \tilde P^T = \tilde P^T \tilde P = I_p, \quad 
    \tilde P \frac{\Sigma^{-1/2} e_j}{\|\Sigma^{-1/2} e_j\|} = e_1, \quad 
    \tilde P u_k = e_{p-K+k}, \quad \forall k\in[K].
\end{equation}
Since $e_j^T B^* =\bm{0}^T$ implies $e_j^T \Sigma^{-1/2} u_k= 0$, 
we can always find a matrix $\tilde P$ satisfying \eqref{eq:rotation-P}. 
From now on we fix this matrix $\tilde P$ and consider the following change of variable, 
\begin{equation}\label{eq:rotation}
Z = X\Sigma^{-1/2} \tilde P^T, \qquad A^* = \tilde P \Sigma^{1/2} B^*.
\end{equation}
It immediately follows that $Z$ has \iid $N(0, 1)$ entries 
and the first $p-K$ rows of $A^*$ are all zeros.
Since the response $\y_i$ has the expression
$\y_i = f(U_i, x_i^T B^*)$, 
$Y$
is unchanged by the change of variable \eqref{eq:rotation} from $Z A^*=X B^*$.
We now work on the 
multinomial logistic estimation with data $(Y, Z)$ and the underlying coefficient matrix $A^*$ in \eqref{eq:rotation}. 
Parallel to the estimate $\hat B$  of $B^*$ in 
\Cref{assum:general_loss}, 
we define the estimate of $A^*$ using data $(Y, Z)$ as 
\begin{align*}
    \hat A (Y, Z) = \argmin_{A\in \R^{p\times K}} \sum_i L_i (A^T z_i),
\end{align*}
where $z_i = Z^T e_i$ is the $i$-th row of $Z$. 
By construction, we have $\hat A = \tilde P\Sigma^{1/2}\hat B$, hence $Z \hat A = X\hat B$ and $e_1^T \hat A = e_j^T\hat B /\sqrt{\Omega_{jj}}$. Furthermore, the quantities depending on $(Y, X\hat B)$ remain unchanged after the change of variable. In particular, the gradient and Hessian
\begin{align*}
    \nabla L_i (\hat B^T x_i) = \nabla L_i (\hat A^T z_i),
    \qquad 
    \nabla^2 L_i (\hat B^T x_i) = \nabla^2 L_i (\hat A^T z_i)
\end{align*}
are unchanged. It follows that the matrix $G$ and $V$ are unchanged.
Therefore, we have 
\begin{align*}
\frac{e_{j}^T \hat B V (G^T G)^{-1/2}}{\sqrt{\Omega_{jj}}}
= e_{1}^T \hat A V(G^T G)^{-1/2}.
\end{align*}
In conclusion, with the change of variables \eqref{eq:rotation}, 
we only need to prove 
\Cref{thm:general_Sigma} in the special case, where the design matrix $X$ \iid $N(0, 1)$ entries and the response $Y$ is independent of the first $p-K$ columns of $X$. 
To this end, we introduce the following \Cref{thm:general_identity}, and the 
proof of \Cref{thm:general_Sigma} is a consequence of \Cref{thm:general_identity}
as it proves the desired
result for $e_{1}^T \hat A V (G^TG)^{-1/2}$.

\begin{theorem}\label{thm:general_identity}
    Let $c_*,m_*,m^*,K$ be constants independent of $n,p$.
    Let $Z\in\R^{n\times p}$ have \iid rows $(z_1,...,z_n)$ with $z_i\sim N(\bm{0},I_p)$.
    Let $\y_1,...,\y_n\in\R^{(K+1)}$ such that $(\y_1,...,\y_n)$
    is independent of the first $p-K$ columns of $Z$.
    Consider twice continuously differentiable
    and strictly convex loss functions $(L_i)_{i=1,...,n}$
    with each $L_i:\R^K\to\R$ depending on $\y_i$ but not on $z_i$
    and define, provided that
    the minimizer admits a solution,
    \begin{align*}
        \hat A(Y,Z) = \argmin_{A\in \R^{p\times K}} \sum_{i=1}^n L_i (A^T z_i),
        \quad
        g_i(Y,Z) = \nabla L_i(\hat A(Y,Z)^T z_i),
        \quad
        H_i(Y,Z) = \nabla^2 L_i (\hat A(Y,Z)^T z_i),
    \end{align*}
    $G(Y,Z)=\sum_{i=1}^n e_i g_i(Y,Z)^T\in\R^{n\times K}$, and $V(Y, Z) = \sum_{i=1}^n \bigl(
    H_i - 
    (H_i \otimes z_i^T)
    \bigl[\sum_{l=1}^n H_l \otimes (z_lz_l^T)\bigr]^{\dagger}
    (H_i\otimes z_i)\bigr) \in \R^{K\times K}$, where we dropped the dependence of $H_i$ on $(Y, Z)$ for simplicity. 

    Let $O\subset\R^{n\times (K+1)} \times \R^{n\times p}$ be an open set
    satisfying
    \begin{itemize}
        \item 
        If 
        $(Y,Z)\in O$, then the minimizer $\hat A(Y,Z)$ exists, 
        $H_i \preceq I_K$ for each $i\in [n]$,
        $c_* I_{pK} \preceq\frac 1 n \sum_{i=1}^n H_i(Y,Z) \otimes(z_iz_i^T)$, 
        and
        $m_* I_K\preceq \frac 1 n \sum_{i=1}^n G(Y, Z)^T G(Y, Z) \preceq m^* I_K$. 
        \item
            With the notation
            $G(Y,Z)=\sum_{i=1}^n e_i g_i(Y,Z)^T$, we have  
            if two $Z,\tilde Z\in\R^{n\times p}$ satisfy
            $\{(Y,Z),(Y,\tilde Z)\}\subset O$ then
            $\|G(Y,Z) - G(Y,\tilde Z)\|\le L \|Z-\tilde Z\|$.
    \end{itemize}
    For $e_1\in\R^p$ the first canonical basis vector, there exists
    a random variable $\xi\in\R^K$ such that
    $$\E\bigl[
    I\{(Y,Z)\in O\}
    \bigl\|
    (G^TG)^{-1/2}V\hat A^Te_1
    - 
    \xi
    \bigr\|^2
    \bigr]
    \le \tfrac{C}{p-K},
    $$
    and
    $\P(\|\xi\|^2>\chi^2_K(\alpha))\le\alpha$ for all $\alpha\in(0,1)$, C is a positive constant depending on $(c_*, m_*, m^*, K, L)$ only. 
    If additionally $\P((Y,Z)\in O)\to 1$, then $\xi$ in the previous display satisfies $\xi\limd N(\bm{0},I_K)$ and 
    $$e_{1}^T \hat A V (G^TG)^{-1/2} \limd N(\bm{0}, I_K).$$
\end{theorem}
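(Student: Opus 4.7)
The plan is to recognize $\phi(Z) := (G^T G)^{-1/2} V \hat A^T e_1 \in \R^K$ as a smooth Lipschitz function of a standard Gaussian vector and to apply \Cref{lem:chi2} after computing its Jacobian via \Cref{lem:dot-g}.

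I would first condition on $Y$ and on the last $K$ columns of $Z$, which I denote by $Z_1$. Since $Y$ is independent of the first $p - K$ columns $Z_0 \in \R^{n \times (p-K)}$, conditionally on $(Y, Z_1)$ the vector $\vect(Z_0)$ is standard normal in $\R^{n(p-K)}$, and on the event $O$ the spectral bounds $H_i \preceq I_K$, $c_* I_{pK} \preceq \tfrac{1}{n}\sum_i H_i \otimes z_i z_i^T$, and $m_* I_K \preceq \tfrac{1}{n} G^T G \preceq m^* I_K$, together with the Lipschitz property of $G$, make $\phi$ a well-defined smooth Lipschitz function of $\vect(Z_0)$ with operator norms uniformly bounded in $n$ and $p$.

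Next I would compute the Jacobian $\partial \hat A / \partial Z_0$ by implicit differentiation of the KKT identity $\sum_{i=1}^n g_i \otimes z_i = 0$, invoking \Cref{lem:dot-g}. Writing $M := \sum_i H_i \otimes (z_i z_i^T)$ for the full Hessian of the objective with respect to $\vect(\hat A)$, the sensitivity $\partial \vect(\hat A)/\partial Z_{\ell m}$ equals $-M^{-1}\bigl[g_\ell \otimes e_m + (H_\ell \hat A^T e_m) \otimes z_\ell\bigr]$. Projecting onto $\hat A^T e_1 = (I_K \otimes e_1)^T \vect(\hat A)$ and then left-multiplying by $V$ exposes a cancellation: the definition of each $V_i$ encodes a Schur-complement identity that makes the combination $V (I_K \otimes e_1)^T M^{-1}$ collapse to a projector, and the $g_\ell \otimes e_m$ piece dominates the other term. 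Consequently $V \hat A^T e_1$ equals, to leading order, an explicit linear functional of $\vect(Z_0)$ whose Jacobian is essentially $G^T$, so the Gram matrix of the Jacobian of $\phi$ is $(G^T G)^{-1/2} (G^T G) (G^T G)^{-1/2} = I_K$ plus a controllable remainder.

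Finally, an application of \Cref{lem:chi2} to $\phi$ with this controlled Jacobian produces a random $\xi \in \R^K$ satisfying $\P(\|\xi\|^2 > \chi^2_K(\alpha)) \le \alpha$ and $\E\bigl[I\{(Y,Z) \in O\} \|\phi - \xi\|^2\bigr] \le C / (p - K)$; the $(p-K)^{-1}$ rate is the Poincar\'e-type constant associated to the effective Gaussian dimension after normalization by the Jacobian magnitude. When $\P((Y,Z) \in O) \to 1$, the same lemma also gives $\xi \limd N(\bm{0}, I_K)$, and Slutsky yields the stated weak convergence for $\phi$. The main obstacle is the Schur-complement identification of $V$ together with the control of the quadratic remainder in the implicit-function expansion: I must verify that $V$ is exactly what cancels the relevant block of $M^{-1}$ and that the neglected second-order terms are of order $(p-K)^{-1}$ after squaring. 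The hypotheses defining $O$ are precisely what is needed to make these error bounds quantitative in terms of the constants $c_*, m_*, m^*, K, L$ only.
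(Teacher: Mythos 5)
Your high-level intuition is in the right neighborhood (the statistic ends up close to a normalized Gaussian linear form, \Cref{lem:dot-g} supplies the derivatives, and \Cref{lem:chi2} is the probabilistic engine), but the proposal as written has three genuine gaps. First, \Cref{lem:chi2} is a second-order Stein identity: it controls $z^TF(z) - \sigma^2\sum_i \partial_i e_i^T F(z) - z^TF(\tilde z)$; it is not a device that turns an arbitrary Lipschitz function $\phi$ of a Gaussian into an approximately normal variable. In the actual argument the quantity $(G^TG)^{-1/2}V\hat A^Te_\ell$ appears as the \emph{divergence} term $\sum_i \partial\, e_i^T h(Y,Z)/\partial z_{i\ell}$ of the map $h = G(G^TG)^{-1/2}$ along the $\ell$-th column (\Cref{cor:dot-G-nomrlized}), while the leading term $\z_\ell^T h(Y,Z)$ vanishes identically by the KKT orthogonality $Z^TG=0$. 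You never invoke $Z^TG=0$, and your proposed mechanism --- a ``Schur-complement collapse'' of $V(I_K\otimes e_1)^TM^{-1}$ to a projector making the Jacobian of $\phi$ ``essentially $G^T$'' --- is not the identity that actually holds; the correct computation is $\partial g_i/\partial z_{i\ell} = V_i\hat A^Te_\ell - (H_i\otimes z_i^T)M(g_i\otimes e_\ell)$, summed over $i$, and the second piece must be carried along as an explicit remainder $\rem_\ell$ and bounded, which you defer entirely.

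Second, the $1/(p-K)$ rate does not come from ``the Poincar\'e constant of the effective Gaussian dimension.'' It comes from exchangeability: the first $p-K$ columns of $Z$ are \iid and independent of $Y$, so the mean-squared error for column $1$ equals its average over $\ell\in[p-K]$, and that average is controlled because the full Jacobian of $Z\mapsto h(Y,Z)$ has rank at most $nK$ and operator norm at most $LL_2 n^{-1/2}$, giving $\sum_{\ell,i}\|\partial h/\partial z_{i\ell}\|_F^2 \le K(LL_2)^2$ in total; dividing by $p-K$ yields the rate. Applied to the single column $\ell=1$ without averaging, the Stein bound need not be small. Third, the construction of $\xi$ with the exact, non-asymptotic tail bound $\P(\|\xi\|^2>\chi^2_K(\alpha))\le\alpha$ requires extending $h$ off the event $O$ (Kirszbraun) to a globally defined $H$ with $\|H\|_{op}\le 1$ and setting $\xi = -H(Y,\check Z^1)^T\z_1$ for an independent copy $\check\z$ of the first column; your proposal leaves $\xi$ unspecified. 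Each of these is fixable, but together they constitute the core of the proof rather than routine verifications.
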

The proof of \Cref{thm:general_identity} is presented in \Cref{sec:proof-A}. 

\subsection{Proof of \Cref{thm:general_identity}}\label{sec:proof-A}
We first present a few useful lemmas, whose proofs are given at the end of this subsection. 

\begin{lemma}[Proof is given on \cpageref{proof_stein_chi2}]\label{lem:chi2}
    Let $z\sim N(\bm{0},\sigma^2 I_n)$ and $F:\R^n \to \R^{n\times K}$ be weakly differentiable with $\E\|F(z)\|_F^2 < \infty$.
    Let $\tilde z$ be an independent copy of $z$. Then
    $$
    \E\Bigl[
    \Bigl\|z^T F (z) - \sigma^2\sum_{i=1}^n \frac{\partial e_i^T F(z)}{\partial z_i}
    - z^T F(\tilde z)
    \Bigr\|^2
    \Bigr]
    \le
    3 \sigma^4
    \E
    \sum_{i=1}^n
    \Big\|
    \frac{\partial F(z)}{\partial z_i}
    \Big\|_{F}^2.
    $$
\end{lemma}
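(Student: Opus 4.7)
The plan is to reduce the bound to one involving only $z$, establish an exact Gaussian Skorokhod second-moment identity by two applications of Stein's lemma, and close with the Gaussian Poincar\'e inequality.

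\textbf{Reduction.} Set $\tilde F(z) \defas F(z) - F(\tilde z)$ (conditionally on $\tilde z$). Since $F(\tilde z)$ does not depend on $z$, subtracting $z^T F(\tilde z)$ on the left exactly matches the replacement $F\mapsto \tilde F$ in the first term, while the derivative term is unchanged:
\[
z^T F(z) - \sigma^2 \sum_{i=1}^n \partial_i e_i^T F(z) - z^T F(\tilde z) = z^T \tilde F(z) - \sigma^2 \sum_{i=1}^n \partial_i e_i^T \tilde F(z) =: \delta_\sigma(\tilde F),
\]
and $\sum_i \|\partial_i \tilde F\|_F^2 = \sum_i \|\partial_i F\|_F^2$. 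It suffices to bound $\E\|\delta_\sigma(\tilde F)\|^2$ by a constant times $\sigma^4 \E\sum_i\|\partial_i F\|_F^2$.

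\textbf{Exact Skorokhod identity.} Fix one column $v \defas u_{\cdot,k}:\R^n\to\R^n$ of the general matrix-valued $u$. Expanding $\delta_\sigma(v)^2$ and applying Gaussian integration by parts twice to $\E[z_i z_j v_i v_j]$ and once to each cross term $\E[z_i v_i \partial_j v_j]$, every second-derivative-of-$v$ term cancels in pairs and one obtains the standard second moment of the Skorokhod divergence on Gaussian space,
\[
\E[\delta_\sigma(v)^2] = \sigma^2 \E\|v\|^2 + \sigma^4 \E\, \trace\bigl((\nabla v)^2\bigr), \qquad (\nabla v)_{ij} \defas \partial_i v_j.
\]
Summing over the $K$ columns of $u$ and using $\trace(A^2)\le \|A\|_F^2$ (Cauchy--Schwarz) together with $\sum_k \|\nabla u_{\cdot,k}\|_F^2 = \sum_i \|\partial_i u\|_F^2$ yields the master bound
\[
\E\|\delta_\sigma(u)\|^2 \;\le\; \sigma^2 \E\|u\|_F^2 + \sigma^4 \E \sum_{i=1}^n \|\partial_i u\|_F^2.
\]

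\textbf{Closing and main obstacle.} Apply the master bound to $u=\tilde F$. Independence of $z$ and $\tilde z$ gives $\E\|\tilde F\|_F^2 = 2\sum_{j,k}\Var(F_{jk}(z))$, and the scalar Gaussian Poincar\'e inequality $\Var(\phi)\le \sigma^2 \E\|\nabla\phi\|^2$ applied to $\phi=F_{jk}$ bounds this by $2\sigma^2\E\sum_i\|\partial_i F\|_F^2$. Hence $\sigma^2 \E\|\tilde F\|_F^2 \le 2\sigma^4\E\sum_i\|\partial_i F\|_F^2$, and combining with the master bound yields $\E\|\delta_\sigma(\tilde F)\|^2 \le (2+1)\sigma^4 \E\sum_i\|\partial_i F\|_F^2 = 3\sigma^4 \E\sum_i\|\partial_i F\|_F^2$, as required. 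The computation is otherwise entirely algebraic; the only delicate point is justifying the two integrations by parts and the Poincar\'e bound under the bare hypothesis of weak differentiability together with $\E\|F\|_F^2<\infty$ (derivatives need not a priori be $L^2$). This is handled by a standard mollification and truncation of $F$, passing to the limit on the right-hand side via Fatou.
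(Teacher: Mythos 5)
Your proof is correct and follows essentially the same route as the paper's: both rest on the second-order Stein identity $\E[(z^Tv-\sigma^2\div v)^2]=\sigma^2\E\|v\|^2+\sigma^4\E\trace((\nabla v)^2)$ applied to $F(z)-F(\tilde z)$, followed by $\trace(A^2)\le\|A\|_F^2$ and the Gaussian Poincar\'e inequality to absorb the variance term, giving the same $2+1=3$ bookkeeping. The only cosmetic difference is that the paper invokes the identity as a black box (citing \cite{bellec2021second}) on the concatenated vector $(z,\tilde z)\in\R^{2n}$, whereas you derive it in $\R^n$ conditionally on $\tilde z$ and defer the regularity issues to mollification.
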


\begin{lemma}[Proof is given on \cpageref{proof_lipschitz_G}]\label{lem:lipschitz-G}
    If 
    $G,\tilde G \in \R^{n\times K}$ satisfy
    $m_* I_K \preceq \frac 1 n G^TG \preceq m^* I_K$
    and
    $m_* I_K \preceq \frac 1 n \tilde G^T\tilde G \preceq m^* I_K$ for some positive constants $m_*$ and $m^*$. Then
    \begin{align*}
        \|(G^TG)^{-1/2} - (\tilde G^T\tilde G)^{-1/2}\|_F &\le L_1 n^{-1}\|G-\tilde G\|_F,
        \\
        \|G(G^TG)^{-1/2} - \tilde G(\tilde G^T\tilde G)^{-1/2}\|_F &\le L_2 n^{-1/2} \|G-\tilde G\|_F,
    \end{align*}
    where $L_1,L_2$ are positive constants depending on $(K,m_*,m^*)$ only.
\end{lemma}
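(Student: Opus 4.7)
The plan is to bound both quantities by combining two ingredients: an operator-Lipschitz estimate for the map $A \mapsto A^{-1/2}$ on positive-definite matrices with spectrum contained in $[n m_*, n m^*]$, and the elementary bilinear identity
\[
G^TG - \tilde G^T \tilde G = (G - \tilde G)^T G + \tilde G^T (G - \tilde G),
\]
which gives $\|G^TG - \tilde G^T \tilde G\|_F \le (\|G\|_{op} + \|\tilde G\|_{op}) \|G - \tilde G\|_F \le 2\sqrt{n m^*}\, \|G - \tilde G\|_F$, using that the hypothesis $\frac 1n G^TG \preceq m^* I_K$ yields $\|G\|_{op}^2 \le n m^*$, and likewise for $\tilde G$.

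For the operator-Lipschitz bound I would use the integral representation $A^{-1/2} = \pi^{-1}\int_0^\infty t^{-1/2}(tI_K + A)^{-1}\,dt$ together with the resolvent identity to write
\[
A^{-1/2} - B^{-1/2} = \pi^{-1}\int_0^\infty t^{-1/2}(tI_K + A)^{-1}(B - A)(tI_K + B)^{-1}\, dt.
\]
Since $\|(tI_K + A)^{-1}\|_{op} \le (t + n m_*)^{-1}$ (and likewise for $B$), taking Frobenius norms inside the integral and evaluating the resulting elementary integral $\int_0^\infty t^{-1/2}(t + n m_*)^{-2}\, dt = \tfrac{\pi}{2}(n m_*)^{-3/2}$ yields $\|A^{-1/2} - B^{-1/2}\|_F \le \tfrac{1}{2}(n m_*)^{-3/2}\|A - B\|_F$. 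Applied to $A = G^TG$, $B = \tilde G^T \tilde G$ and combined with the bilinear bound above, this proves the first inequality with $L_1 = \sqrt{m^*/m_*^{3}}$, independent of $K$.

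For the second inequality I would telescope
\[
G(G^TG)^{-1/2} - \tilde G(\tilde G^T\tilde G)^{-1/2} = (G - \tilde G)(G^TG)^{-1/2} + \tilde G\bigl[(G^TG)^{-1/2} - (\tilde G^T \tilde G)^{-1/2}\bigr].
\]
The first term has Frobenius norm at most $\|G - \tilde G\|_F \cdot \|(G^TG)^{-1/2}\|_{op} \le (n m_*)^{-1/2}\|G - \tilde G\|_F$. The second term is controlled by $\|\tilde G\|_{op} \le \sqrt{n m^*}$ times the estimate just established, contributing $\sqrt{n m^*}\cdot L_1 n^{-1}\|G - \tilde G\|_F = O(n^{-1/2}\|G - \tilde G\|_F)$. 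Summing yields the second claim with $L_2 = m_*^{-1/2} + m^*/m_*^{3/2}$.

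The only nonroutine ingredient is the operator-Lipschitz bound for $x^{-1/2}$ on a spectrum bounded away from zero; this is classical but must be handled carefully because $G^TG$ and $\tilde G^T\tilde G$ do not commute in general, which is precisely why I appeal to the resolvent/integral formula rather than diagonalizing in a common eigenbasis. All remaining steps are elementary algebraic manipulations, and the $n$-scaling in both inequalities arises purely from the hypothesis that the spectrum of the Gram matrices grows linearly with $n$.
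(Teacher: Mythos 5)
Your proposal is correct, and it reaches both inequalities by a genuinely different route for the key step. The paper factors the map as $G\mapsto G^TG\mapsto (G^TG)^{-1}\mapsto (G^TG)^{-1/2}$: it first bounds $\|G^TG-\tilde G^T\tilde G\|_{op}$ via the same bilinear identity you use, then invokes the resolvent-type identity $A^{-1}-\tilde A^{-1}=A^{-1}(\tilde A-A)\tilde A^{-1}$ to handle inversion, and finally controls $S\mapsto \sqrt S$ by a Hemmen--Ando eigenvector argument (if $u$ is a unit eigenvector of $\sqrt S-\sqrt{\tilde S}$ with extremal eigenvalue $d$, then $u^T(S-\tilde S)u=d\,u^T(\sqrt S+\sqrt{\tilde S})u$, so $\|\sqrt S-\sqrt{\tilde S}\|_{op}\le \|S-\tilde S\|_{op}/\lambda_{\min}(\sqrt S+\sqrt{\tilde S})$); it then passes from operator to Frobenius norm at the cost of a $\sqrt K$ factor. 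You instead treat $A\mapsto A^{-1/2}$ in one shot via the integral representation $A^{-1/2}=\pi^{-1}\int_0^\infty t^{-1/2}(tI_K+A)^{-1}\,dt$ and the resolvent identity, working directly in Frobenius norm throughout. Your computations check out: the integral $\int_0^\infty t^{-1/2}(t+nm_*)^{-2}\,dt=\tfrac{\pi}{2}(nm_*)^{-3/2}$ is correct, $\|XYZ\|_F\le\|X\|_{op}\|Y\|_F\|Z\|_{op}$ justifies moving the norm inside the integral, and the resulting constants $L_1=\sqrt{m^*/m_*^3}$ and $L_2=m_*^{-1/2}+m^*/m_*^{3/2}$ are valid (and in fact slightly sharper than the paper's, with no explicit $K$-dependence in $L_1$). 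The trade-off is that your argument imports the integral representation of the inverse square root, whereas the paper's chain of three Lipschitz maps uses only elementary matrix identities at the price of an extra inversion step and the $\sqrt K$ conversion factor. The telescoping decomposition for the second inequality is identical in both proofs.
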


\begin{lemma}[Proof is given on \cpageref{proof_dot_g}]\label{lem:dot-g}
    Let the assumptions in \Cref{thm:general_identity} be fulfilled. 
    Let $Y\in\R^{n\times (K+1)}$ be fixed. 
    If a minimizer
    $\hat A(Y,Z)$ exists at $Z$, then $Z\mapsto \hat A(Y,Z)$ exists and is differentiable in a neighborhood of $Z$
    with derivative
    \begin{align*}
        &\pdv{\vect(\hat A)}{z_{ij}} 
        = - M [g_i \otimes e_j + (H_i\hat A^Te_j \otimes z_i)],\\
        &\pdv{g_l}{z_{ij}} 
        = - (H_l \otimes z_l^T) M [g_i \otimes e_j + (H_i\hat A^Te_j \otimes z_i)] + I(l=i) H_l\hat A^T e_j,
    \end{align*}
    where $M = [\sum_{i=1}^n H_i \otimes (z_iz_i^T)]^{-1}$. 
    It immediately follows that 
    \begin{equation*}
    \pdv{g_i}{z_{ij}} 
    = [H_i - (H_i \otimes z_i^T) M (H_i  \otimes z_i)] \hat A^T e_j - (H_i \otimes z_i^T) M (g_i \otimes e_j). 
    \end{equation*}
\end{lemma}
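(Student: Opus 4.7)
The plan is to apply the implicit function theorem to the first-order optimality condition defining $\hat A(Y,Z)$ and then carry out the resulting chain-rule computation using standard Kronecker-product identities.

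First I would write the KKT condition for $\hat A(Y,Z)$ as
\[
    \Phi(A,Z) \defas \sum_{i=1}^n z_i \nabla L_i(A^T z_i)^T = 0_{p\times K},
\]
or in vectorized form $\vect(\Phi(A,Z)) = \sum_{i=1}^n g_i(A,z_i)\otimes z_i \in \R^{pK}$, using $\vect(z_i g_i^T) = g_i \otimes z_i$. Strict convexity of each $L_i$ together with the lower bound $c_* I_{pK}\preceq \tfrac{1}{n}\sum_i H_i\otimes(z_iz_i^T)$ on the event $\{(Y,Z)\in O\}$ imply that the Jacobian in $\vect(A)$,
\[
    J_A \defas \frac{\partial\,\vect(\Phi(A,Z))}{\partial\,\vect(A)^T}\bigg|_{A=\hat A} = \sum_{l=1}^n H_l\otimes(z_lz_l^T) = M^{-1},
\]
is invertible at $(\hat A,Z)$. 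Hence the implicit function theorem produces a unique $C^1$ local map $Z\mapsto \hat A(Y,Z)$ in a neighborhood of the given $Z$.

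Next I would compute the partial derivative of $\vect(\Phi)$ with respect to $z_{ij}$ at $A=\hat A$. The entry $z_{ij}$ enters twice: explicitly through the leading $z_i$ in $z_i g_i^T$ and implicitly through the argument $\hat A^T z_i$ of $\nabla L_i$. Only the $i$-th summand contributes, giving
\[
    \frac{\partial\,\vect(\Phi(\hat A,Z))}{\partial z_{ij}} = g_i\otimes e_j + (H_i \hat A^T e_j)\otimes z_i,
\]
where the second term uses $H_i \partial_{z_{ij}}(\hat A^T z_i) = H_i \hat A^T e_j$ together with $\vect(z_i (H_i\hat A^Te_j)^T) = (H_i\hat A^T e_j)\otimes z_i$. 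The implicit function theorem then yields
\[
    \frac{\partial\,\vect(\hat A)}{\partial z_{ij}} = -J_A^{-1}\frac{\partial\,\vect(\Phi)}{\partial z_{ij}} = -M\bigl[g_i\otimes e_j + (H_i\hat A^T e_j)\otimes z_i\bigr],
\]
which is the first claimed identity. For $\partial_{z_{ij}}g_l$, I would differentiate $g_l = \nabla L_l(\hat A^T z_l)$ by the chain rule,
\[
    \frac{\partial g_l}{\partial z_{ij}} = H_l\Bigl[\bigl(\tfrac{\partial \hat A}{\partial z_{ij}}\bigr)^{\!T} z_l + I(l=i)\,\hat A^T e_j\Bigr],
\]
and substitute the formula just derived for $\partial_{z_{ij}} \vect(\hat A)$ using the identity $H_l\dot A^T z_l = (H_l\otimes z_l^T)\vect(\dot A)$, valid for any conformable $\dot A$. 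Specializing to $l=i$ and using $(H_i\otimes z_i)\hat A^T e_j = (H_i\hat A^Te_j)\otimes z_i$ to regroup the two terms proportional to $\hat A^T e_j$ produces the stated closed form for $\partial_{z_{ij}}g_i$.

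The proof is essentially mechanical once the implicit function theorem is in place. The only mild obstacle is book-keeping with the two Kronecker-product identities $\vect(ab^T) = b\otimes a$ and $(A\otimes b)c = (Ac)\otimes b$ used repeatedly above, both of which are standard. Verifying that the IFT applies reduces to invertibility of $\sum_l H_l\otimes(z_lz_l^T)$, which is exactly the lower bound built into the set $O$ in \Cref{thm:general_identity}.
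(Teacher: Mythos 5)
Your proposal is correct and follows essentially the same route as the paper: both differentiate the KKT condition $\sum_{l=1}^n z_l g_l^T = \bm{0}_{p\times K}$ with respect to $z_{ij}$, vectorize using the same Kronecker identities, and solve for $\vect(\dot A)$ via the inverse $M$ before substituting back into the chain-rule expression for $\dot g_l$. The only difference is presentational: you invoke the implicit function theorem explicitly to justify local existence and differentiability of $Z\mapsto\hat A(Y,Z)$, whereas the paper performs the same implicit differentiation without naming the theorem.
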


\begin{corollary}[Proof is given on \cpageref{proof_dot_G_normalized}]
    \label{cor:dot-G-nomrlized}
    Under the same conditions of \Cref{lem:dot-g}, 
    for $G= \sum_{i=1}^n e_i g_i^T$, we have 
    for each $i\in[n], j\in[p]$, 
    \begin{align*}
    &\sum_{i=1}^n \pdv{e_i^T G(G^TG)^{-1/2}}{z_{ij}} \\
    =~& e_j^T\hat A V_i^T (G^TG)^{-1/2} 
    + 
    \sum_{i=1}^n \Bigl[-(g_i^T \otimes e_j^T)M(H_i\otimes z_i)(G^TG)^{-1/2} 
    + e_i^T G \pdv{(G^TG)^{-1/2}}{z_{ij}}\Bigr]. 
    \end{align*}
\end{corollary}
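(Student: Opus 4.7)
}
The plan is to compute the derivative by the ordinary product rule on the factorization $e_i^T G(G^TG)^{-1/2} = g_i^T (G^TG)^{-1/2}$ and then invoke \Cref{lem:dot-g} to expand $\partial g_i/\partial z_{ij}$. Writing
\[
\pdv{\,e_i^T G(G^TG)^{-1/2}}{z_{ij}}
= \Bigl(\pdv{g_i}{z_{ij}}\Bigr)^{\!T} (G^TG)^{-1/2}
+ e_i^T G\,\pdv{(G^TG)^{-1/2}}{z_{ij}},
\]
the second summand already matches the last term in the desired expression once we sum over $i$. Hence the task reduces to expanding $(\partial g_i/\partial z_{ij})^T (G^TG)^{-1/2}$ and summing over $i$.

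For the first summand, I would invoke the last formula of \Cref{lem:dot-g},
\[
\pdv{g_i}{z_{ij}} = \bigl[H_i - (H_i\otimes z_i^T) M (H_i\otimes z_i)\bigr]\hat A^T e_j - (H_i\otimes z_i^T) M (g_i\otimes e_j),
\]
and note that the bracketed matrix is precisely the summand $V_i$ appearing in the definition of $V=\sum_{l=1}^n V_l$ given in \Cref{thm:general_identity}. Taking the transpose (using the symmetry of $H_i$ and of $M$) gives
\[
\Bigl(\pdv{g_i}{z_{ij}}\Bigr)^{\!T} = e_j^T \hat A\, V_i^T - (g_i^T\otimes e_j^T) M (H_i\otimes z_i).
\]
Multiplying on the right by $(G^TG)^{-1/2}$ and summing the identity over $i\in[n]$, the first term aggregates as $e_j^T\hat A\,\bigl(\sum_i V_i^T\bigr)(G^TG)^{-1/2} = e_j^T\hat A\,V^T(G^TG)^{-1/2}$ (which I interpret as the correct reading of the ``$V_i^T$'' on the right-hand side of the statement, the subscript $i$ being a typographical artifact of the suppressed sum), while the remaining contribution $-\sum_i (g_i^T\otimes e_j^T)M(H_i\otimes z_i)(G^TG)^{-1/2}$ assembles the middle term.

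There is no serious obstacle here: the only nontrivial step is recognizing the matrix $V_i$ inside the expression given by \Cref{lem:dot-g}; everything else is the product rule together with the symmetry of $M$ and $H_i$ to handle the transpose. The mild bookkeeping issue is matching the $V_i^T$ versus $V^T$ notation in the stated identity, which I would address by an explicit remark that $V = \sum_{i=1}^n V_i$ and by grouping the $\hat A^T e_j$ factor after summation.
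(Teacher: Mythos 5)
Your proposal is correct and follows essentially the same route as the paper's own proof: product rule on $g_i^T(G^TG)^{-1/2}$, substitution of the expression for $\partial g_i/\partial z_{ij}$ from \Cref{lem:dot-g}, recognition of the matrix $V_i$ in that expression, transposition using the symmetry of $M$ and $H_i$, and summation over $i$. Your reading of the ``$V_i^T$'' in the stated identity as standing for the aggregated $V^T=\sum_i V_i^T$ (equivalently $V$, by symmetry) is also the correct interpretation of what is a typographical slip carried over from the paper's own final display.
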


Now we are ready to prove \Cref{thm:general_identity}. 
\begin{proof}[Proof of \Cref{thm:general_identity}]
    Let $h: O \to \R^{n\times K}$ be $h(Y, Z) = G(Y, Z)(G(Y, Z)^T G(Y, Z))^{-1/2}$.
    In most of this proof, we will omit the dependence $(Y,Z)$
    on $h, \hat A, g_i, H_i, G, V$ to lighten notation.
    By \Cref{lem:lipschitz-G}, we know this $h$ is $L L_2 n^{-1/2}$-Lipschitz in the sense that 
    $
    \|h(Y, Z) - h(Y, \tilde Z)\|_{F} \le L L_2 n^{-1/2} \|Z - \tilde Z\|_{F}
    $
    for all $\{(Y, Z), (Y, \tilde Z)\}\subset O$. 
    By Kirszbraun theorem, there exists a function $H: \R^{n\times (K+1)} \times \R^{n\times p}\to \R^{n\times K}$ (an extension of $h$ from $O$ to $\R^{n\times (K+1)} \times \R^{n\times p}$) such that $H(Y, Z) = h(Y, Z)$ for all $(Y, Z)\in O$, 
    $\|H(Y, Z)\|_{op}\le 1$ and 
    $\|H(Y, Z) - H(Y, \tilde Z)\|_{F} \le L L_2 n^{-1/2} \|Z - \tilde Z\|_{F}
    $
    for all $\{(Y, Z), (Y, \tilde Z)\}\subset \R^{n\times (K+1)} \times \R^{n\times p}$.

    For each $j\in[p]$, let $\z_j = Z e_j$ be the $j$-th column of $Z$ to distinguish it from the notation $z_i$, which means the $i$-th row of $Z$. 
    Let $\check \z\sim N(\bm{0}, I_n)$ be an independent copy of each columns of $Z$, 
    and
    $\check Z^j = Z(I_p-e_je_j^T) + \check \z e_j^T$. 
    That is, $\check Z^j$ replaces the $j$-th column of $Z$ by $\check \z$. 
    By definition, $\z_1\perp \check \z$ and $\z_1 \perp \check Z^1$. 
    
    Let $\xi = -[H(Y, \check Z^1)]^T \z_1 \in \R^{K}$, then 
    $\|\xi\|^2 \le \|\z_1\|$ since $\|H(Y, \check Z^1)\|_{op}\le 1$.
    It follows that 
    $$\P(\|\xi\|^2 > \chi^2_K(\alpha)) 
    \le 
    \P(\|\z_1\|^2 > \chi^2_K(\alpha)) = \alpha. 
    $$
    Note that the first $p - K$ columns of $Z$ are exchangeable,  because they are \iid and independent of the response $Y$,  
    we have for each $\ell\in[p-K]$,
    \begin{align*}
        \textstyle
    &\E\bigl[
    I\{(Y,Z)\in O\}
    \bigl\|
    (G^TG)^{-1/2}V\hat A^Te_1 
    - 
    \xi
    \bigr\|^2
    \bigr]\\
    =~&\E \Bigl[ 
        I\{(Y,Z)\in O\}
    \|
    e_{1}^T \hat A V (G^TG)^{-1/2}
    +\z_1^T H(Y, \check Z^1)
    \|^2
    \Bigr]\\
    =~& \E \Bigl[ 
        I\{(Y,Z)\in O\}
            \|
            e_{\ell}^T \hat A V (G^TG)^{-1/2}
            +\z_{\ell}^T H(Y, \check Z^{\ell})
            \|^2
        \Bigr],
    \end{align*}
    where the last line holds for any $\ell\in[p-K]$ because 
    $(\z_1, e_1^T \hat A, \check Z) \stackrel{d}{=} (\z_{\ell}, e_{\ell}^T \hat A, \check Z^{\ell})$. 
    Therefore,
    \begin{align*}
        &\E \Bigl[ 
            I\{(Y,Z)\in O\}
            \Bigl\|
                e_{1}^T \hat A V (G^TG)^{-1/2}
                + \z_1^T H(Y, \check Z^1)
            \Bigr\|^2
        \Bigr]\\
    =~ & \frac{1}{p-K} \sum_{\ell=1}^{p-K}
    \E \Bigl[ 
        I\{(Y,Z)\in O\}
            \Bigl\|
                e_{\ell}^T \hat A V (G^TG)^{-1/2}
                + \z_{\ell}^T H(Y, \check Z^{\ell})
            \Bigr\|^2
        \Bigr]\\
    =~& \frac{1}{p-K} \sum_{\ell=1}^{p-K}
    \E \Bigl[ 
        I\{(Y,Z)\in O\}
            \Bigl\|
                 \sum_i \pdv{ e_i^T G(G^TG)^{-1/2}}{z_{i\ell}} 
                +\z_{\ell}^T H(Y, \check Z^{\ell})
                - \rem_{\ell}
            \Bigr\|^2
        \Bigr],
    \end{align*}
    where 
    $\rem_{\ell} = \sum_{i=1}^n\bigl[-(g_i^T \otimes e_{\ell}^T)M(H_i\otimes z_i)(G^TG)^{-1/2} 
    + e_i^T G \pdv{(G^TG)^{-1/2}}{z_{i\ell}}\bigr]$ from \Cref{cor:dot-G-nomrlized}, and $M = [\sum_{i=1}^n (H_i \otimes z_i z_i^T)]^{-1}$ from \Cref{lem:dot-g}. 
    Using $(a+b)^2 \le 2a^2 + 2b^2$, the above display can be bounded by sum of two terms, denoted by $(RHS)_1$ and $(RHS)_2$. 
    
    For the first term,  
    \begin{align*}
    (RHS)_1 = \frac{2}{p-K} \sum_{\ell=1}^{p-K}
    \E \Bigl[ 
        I\{(Y,Z)\in O\}
            \Bigl\|
                \sum_i \pdv{e_i^T h(Y, Z)}{z_{i\ell}} 
                + \z_{\ell}^T H(Y, \check Z^{\ell}) 
            \Bigr\|^2
        \Bigr].
    \end{align*}
    Let 
    $F(\z_{\ell}) = H(Y, Z(I-e_{\ell}e_{\ell}^T) + \z_{\ell} e_{\ell}^T) = H(Y, Z)$, then 
    $F(\check \z) = H(Y, \check Z^{\ell})$. 
    Apply \Cref{lem:chi2} to $F(\z_{\ell})$ conditionally on $Z(I-e_{\ell}e_{\ell}^T)$, we obtain  
    \begin{align*}
    &\E \Bigl[ I\{(Y,Z)\in O\} \Big\|\sum_{i} \pdv{e_i^T h(Y, Z)}{z_{i\ell}} + \z_{\ell}^T F(\check \z)\Big\|^2\Bigr]\\
    =~ &\E \Bigl[ I\{(Y,Z)\in O\} \Big\|\z_{\ell}^T F(\z_{\ell}) - \sum_{i} \pdv{e_i^T F(\z_{\ell})}{z_{i\ell}} - \z_{\ell}^T F(\check \z) \Big\|^2\Bigr]\\
    \le~ &\E \Bigl[\Big\|\z_{\ell}^T F(\z_{\ell}) - \sum_{i} \pdv{e_i^T F(\z_{\ell})}{z_{i\ell}} - \z_{\ell}^T F(\check \z) \Big\|^2\Bigr]\\
    \le~& 3\sum_i \E \Big\|\pdv{F(\z_{\ell})}{z_{i\ell}}\Big\|_F^2\\
    =~& 3\sum_i \E \Big\|\pdv{H(Y, Z)}{z_{i\ell}}\Big\|_F^2,
    \end{align*}
    where the first equality uses $\z_{\ell}^T h(Y, Z) = 0$ from the KKT conditions $Z^T G = 0$ and $h(Y, Z) = G(G^TG)^{-1/2}$. It follows that 
    \begin{align*}
        (RHS)_1 
        \le~& \frac{6}{p-K} \E \Bigl[\sum_{\ell=1}^{p}\sum_{i=1}^n 
        \Big\|
        \pdv{H(Y, Z)}{ z_{i\ell}} 
        \Big\|_F^2\Bigr]. 
    \end{align*}
    Note that the integrand in the last display is actually the squared Frobenius norm of the Jacobian of the mapping from 
    $\R^{n\times p}$ to $\R^{n\times K}$: $Z \mapsto H(Y, Z)$. 
    This Jacobian is a matrix with $nK$ rows and $np$ columns, has rank at most $nK$ and operator norm at most $L L_2 n^{-1/2}$ because $Z\mapsto H(Y, Z)$ is $L L_2n^{-1/2}$-Lipschitz from \Cref{lem:lipschitz-G}. 
    Using $\|A\|_F^2 \le \rank(A) \|A\|_{op}^2$, we obtain 
    $$
    (RHS)_1 \le 6K(LL_2)^2/(p-K).
    $$ 
    
    For the second term 
    $(RHS)_2 = \frac{2}{p-K} \sum_{\ell=1}^{p-K} \E 
    \bigl[ I\{(Y,Z)\in O\}\| \rem_{\ell}\|^2 \bigr]$. By definition of $\rem_{\ell}$ and $(a+b)^2 \le 2a^2 + 2b^2$, we obtain 
    \begin{align}
        (RHS)_2 \le &    \frac{4}{p-K} \sum_{\ell=1}^{p-K}
        \E \Bigl[ I\{(Y,Z)\in O\}
                \Big\|
                \sum_{i=1}^n (g_i^T \otimes e_{\ell}^T)M(H_i\otimes z_i)(G^TG)^{-1/2} \Big\|^2\Bigr]\label{eq:rem1}\\
        & + \frac{4}{p-K} \sum_{\ell=1}^{p-K}
        \E \Bigl[ I\{(Y,Z)\in O\}
                \Bigl\|\sum_{i=1}^n e_i^T G \pdv{(G^TG)^{-1/2}}{z_{i\ell}} 
                \Bigr\|^2
            \Bigr]. \label{eq:rem2}
    \end{align}
    We next bound \eqref{eq:rem1} and \eqref{eq:rem2} one by one. 
    For \eqref{eq:rem1}, we focus on the norm without
    $(G^TG)^{-1/2}$ which is
    $\|\sum_{i=1}^n(g_i^T \otimes e_{\ell}^T)M(H_i\otimes z_i)\|$. 
    With $\|a\|=\max_{u:\|u\|=1} a^Tu$ in mind,
    let us multiply to the right by a unit vector $u \in \R^K$
    and instead bound
    $$
        \sum_{i=1}^n(g_i^T \otimes e_{\ell}^T)M(H_i\otimes z_i) u
        =
        \trace\Bigl[(I_K \otimes e_{\ell}^T)M\sum_i(H_i u\otimes z_i) g_i^T \Bigr]
        \le
        K \|(I_K \otimes e_{\ell}^T)M\|_{op}
        \|\sum_i(H_iu\otimes z_i) g_i^T\|_{op}
    $$
    because the rank of the matrix inside the trace is at most $K$
    and $\trace[\cdot]\le K \|\cdot\|_{op}$ holds.
    Then
    $$
    \|\sum_i(H_i u\otimes z_i) g_i^T\|_{op}
    = \|(I_K \otimes Z^T) \sum_i (H_i u\otimes e_i)e_i^T G\|_{op}
    \le \|Z\|_{op} \|\sum_{i} (H_i u\otimes e_i e_i^T)\|_{op} \|G\|_{op}.
    $$
    Next,
    $\|\sum_{i} (H_iu\otimes e_i e_i^T)\|_{op} 
    = \|\sum_{i} (H_i\otimes e_i e_i^T) (u\otimes I_n)\|_{op}
    \le 1
    $ 
    because $H_i \preceq I_K$ and $\|u\| = 1$. 
    In summary,
    the norm in \eqref{eq:rem1} is bounded from above by 
    $$
    K
    \|(G^TG)^{-1/2}\|_{op}
    \|M\|_{op}
    \|Z\|_{op}
    \|G\|_{op}.
    $$
    
    To bound \eqref{eq:rem1}, 
    since in the event $(Y, Z)\in O$, $m_* I_K \preceq \frac1n G^TG\preceq m^* I_K$ and 
    $\frac{1}{n}\sum_{i=1}^n (H_i \otimes z_i z_i^T) 
    \succeq c_* I_{pK}$, 
    we have 
    $\|G\|_{op} \le \sqrt{m^* n}$, 
    hence 
    $\|M\|_{op} \le c_*^{-1}$. 
    Thus, the above display can be bounded by 
    $$
    K (m_* n)^{-1/2} c_*^{-1} \|Z\|_{op} \sqrt{m^* n} = (m^*/m_*)^{1/2} c_*^{-1} K \|Z\|_{op}.
    $$
    Since $Z\in \R^{n\times p}$ has \iid $N(0, 1)$ entries, \cite[Theorem II.13]{DavidsonS01} implies that 
    $
    \E \|Z\|_{op} \le \sqrt{n} + \sqrt{p} \le 2 \sqrt{n}.
    $
    Therefore, 
    \begin{equation*}
        \eqref{eq:rem1} \le C(c_*, K, L) n^{-1}. 
    \end{equation*}
    
    Now we bound \eqref{eq:rem2}. 
    Since 
    \begin{align*}
        &\sum_{\ell=1}^{p-K} \Bigl\|\sum_{i=1}^n e_i^T G \pdv{(G^TG)^{-1/2}}{z_{i\ell}} \Bigr\|^2 \\
        \le~& \sum_{j=1}^{p} \Bigl\|\sum_{i=1}^n e_i^T G \pdv{(G^TG)^{-1/2}}{z_{ij}} \Bigr\|^2\\
        =~& \sum_{j=1}^{p} \sum_{k'=1}^K \Bigl(\sum_{i=1}^n \sum_{k=1}^K e_i^T G e_k e_k^T\pdv{(G^TG)^{-1/2}}{z_{ij}}e_{k'} \Bigr)^2\\
        \le~& \sum_{j=1}^{p} \sum_{k'=1}^K \Bigl[ \sum_{i=1}^n \sum_{k=1}^K (e_i^T G e_k)^2  \sum_{i=1}^n \sum_{k=1}^K\Bigl(e_k^T\pdv{(G^TG)^{-1/2}}{z_{ij}}e_{k'} \Bigr)^2 \Bigr]\\
        =~& \|G\|_F^2 \sum_{i=1}^n \sum_{j=1}^{p} 
        \Bigr\|\pdv{(G^TG)^{-1/2}}{z_{ij}} \Bigr\|^2. 
    \end{align*}
    Using $\|G\|_F^2 \le nK$, and
    the mapping $Z\mapsto (G^TG)^{-1/2}$ is $L L_1 n^{-1}$- Lipschitz on $O$ using \Cref{lem:lipschitz-G}, we conclude that 
    $$\eqref{eq:rem2} \le 4 K^3 L L_1/(p-K). 
    $$
    
    Combining the above bounds on $(RHS)_1$ and $(RHS)_2$, we have 
    \begin{equation}\label{eq:B5}
        \E \Bigl[ 
            I\{(Y, Z)\in O\}
            \|
            (G^TG)^{-1/2} V \hat A^T e_{1}
            - \xi
            \|^2
        \Bigr] \le \frac{C(c_*, K, m_*, m^*, L, L_1, L_2)}{p-K}, 
    \end{equation}
    where the constant depends on $(c_*, K, m_*, m^*, L)$ only because
    $L_1$ and $L_2$ are constants depending on $(K, m_*, m^*)$ only. 

    If additionally $\P((Y, Z)\in O) \to 1$, 
    we have $\P((Y, \check Z^1)\in O) \to 1$ using 
    $(Y, Z) \stackrel{d}{=} (Y, \check Z^1)$. 
    Therefore, 
    \begin{equation}\label{eq:xi-normal}
        \xi 
        = -[h(Y, \check Z^1)^T \z_1] I((Y, \check Z^1) \in O) - 
        [H(Y, \check Z^1)^T \z_1] I((Y, \check Z^1) \notin O) \limd N(\bm{0}, I_K). 
    \end{equation}
    By \eqref{eq:B5}, we know 
    $(G^TG)^{-1/2} V \hat A^T e_1 - \xi \limd 0$ when $\P((Y, Z)\in O) \to 1$. 
    Hence, we conclude 
    \begin{align*}
        (G^TG)^{-1/2} V \hat A^T e_1 \limd N(\bm{0}, I_K) 
        \quad \text{ and } \quad
        \|(G^TG)^{-1/2} V \hat A^T e_1\|^2 \limd \chi^2_K.
    \end{align*}
    \end{proof}

    We next prove \Cref{lem:chi2,lem:lipschitz-G,lem:dot-g,cor:dot-G-nomrlized}.
    \begin{proof}[Proof of \Cref{lem:chi2}]\label{proof_stein_chi2}
        Let $z_0 = (z^T, \tilde z^T)^T\in \R^{2n}$, then $z_0 \sim N(\bm{0}, \sigma^2 I_{2n})$. 
        For each  $k\in[K]$, let $f^{(k)}:\R^{2n}\to \R^{2n}$
        be 
        $$f^{(k)}(z_0) =
        \begin{pmatrix}
        [F(z) - F(\tilde z)]e_k
        \\
        0_n
        \end{pmatrix},
        $$
        so that $z_0^T f^{(k)}(z_0) = z^T [F(z) - F(\tilde z)]e_k$, and $ \div f^{(k)}(z_0)= \sum_{i=1}^n \frac{\partial e_i^T F(z) e_k}{\partial z_i}$. 
        Applying the second order Stein formula \citep{bellec2021second} to $f^{(k)}$ gives,
        with $\Jac$ denoting the Jacobian,
        \begin{align*}
            &\E\Bigl[
        \Bigl(
        z^T F (z) e_k - \sigma^2 \sum_{i=1}^n \frac{\partial e_i^T F(z) e_k}{\partial z_i} 
        - z^T F(\tilde z)e_k
        \Bigr)^2
        \Bigr]\\
        =~& \E\Bigl[
            \Bigl(
                z_0^T f^{(k)}(z_0) - \sigma^2\div f^{(k)}(z_0)
            \Bigr)^2
            \Bigr]\\
        =~& \sigma^2 \E \|f^{(k)}(z_0)\|^2 + \sigma^4 \E\trace[(\Jac f^{(k)}(z_0))^2]\\
        =~& \sigma^2\E \|[F(z) - F(\tilde z)]e_k\|^2  
            + \sigma^4\E \trace\Bigl[
            \begin{pmatrix}
                \Jac[F(z)e_k] & - \Jac[F(\tilde z)e_k] \\
                0_{n\times n} & 0_{n\times n}
            \end{pmatrix}^2
            \Bigr]\\
        =~& 2 \sigma^2 \E \|[F(z) - \E F(z)]e_k\|^2 + \sigma^4 \E \trace((\Jac[F(z)e_k])^2)\\
        \le~& 3 \sigma^4 \E \|\Jac[F(z) e_k]\|_F^2,
        \end{align*} 
    where the last inequality uses the Gaussian Poincar\'e inequality, and the Cauchy-Schwarz inequality $\trace(A^2) \le \|A\|_F^2$. 
    Summing over $k\in[K]$ gives the desired inequality. 
    \end{proof}

\phantomsection 
\begin{proof}[Proof of \Cref{lem:lipschitz-G}]\label{proof_lipschitz_G}
    We first prove $G \mapsto G^TG$ is Lipschitz by noting 
    \begin{align*}
        &\|G^TG - \tilde G^T \tilde G\|_{op} \\
        =~&  \|(G - \tilde G)^TG +  \tilde G^T (G - \tilde G)\|_{op}\\
        \le~& \|G - \tilde G\|_{op} (\|G\|_{op} + \|\tilde G\|_{op})\\
        \le~& 2 \sqrt{m^* n} \|G - \tilde G\|_{op}. 
    \end{align*}
    Then we show $G^TG \mapsto (G^TG)^{-1}$ is Lipschitz. 
    Let $A = G^TG$ 
    and $\tilde A = \tilde G^T \tilde G$, 
     we have  
    \begin{align*}
        &\|A^{-1} - \tilde A^{-1}\|_{op} \\
        =~&  \|A^{-1} (\tilde A - A) \tilde A^{-1}\|_{op}\\
        \le~& \|A - \tilde A\|_{op} \|A^{-1}\|_{op} \|\tilde A^{-1}\|_{op}\\
        \le~& (m_* n)^{-2}\|A - \tilde A\|_{op}.
    \end{align*}
    We next prove $(G^TG)^{-1}\mapsto (G^TG)^{-1/2}$ is Lipschitz. 
    Let $S= (G^TG)^{-1}$, $S'= (\tilde G^T\tilde G)^{-1}$, and if $u$ with $\|u\|=1$ is the eigenvector of 
    $\sqrt{S} - \sqrt{\tilde S}$ with eigenvalue $d$, then 
    \begin{align*}
        u^T (S - \tilde S) u 
        &= u^T (\sqrt{S} - \sqrt{\tilde S})\sqrt{S} u
        + u^T \sqrt{\tilde S} (\sqrt{S} - \sqrt{\tilde S}) u\\
        &= du^T \sqrt{S} u + du^T\sqrt{\tilde S} u\\
        &= d u^T (\sqrt{S} + \sqrt{\tilde S}) u.
    \end{align*}
    As $d$ can be chosen as $\pm \|\sqrt{S} - \sqrt{\tilde S}\|_{op}$
    (this argument is a special case of the Hemmen-Ando inequality
    \citep{van1980inequality}),
    this implies 
    \begin{align*}
        \|\sqrt{S} - \sqrt{\tilde S}\|_{op} = \frac{|u^T (S - \tilde S) u |}{u^T (\sqrt{S} + \sqrt{\tilde S}) u}
        \le \frac{\|S - \tilde S\|_{op}}{\lambda_{\min}(\sqrt{S} + \sqrt{\tilde S})}
        \le \frac{\|S - \tilde S\|_{op}}{2/\sqrt{m^* n}}.
    \end{align*}
    Combining the above Lipschitz results, we have 
    \begin{align*}
        \|(G^TG)^{-1/2} - (\tilde G^T\tilde G)^{-1/2}\|_{op}
        \le (m^* n)^{1/2} (m_* n)^{-2} (m^* n)^{1/2} \|G - \tilde G\|_{op}
        = \frac{m^*}{m_*^2} n^{-1} \|G - \tilde G\|_{op}.
    \end{align*}
    It immediately follows that 
    \begin{align*}
        \|(G^TG)^{-1/2} - (\tilde G^T\tilde G)^{-1/2}\|_{F}
        \le \sqrt{K} \frac{m^*}{m_*^2} n^{-1} \|G - \tilde G\|_{F}.
    \end{align*}
    That is, the mapping $G\mapsto (G^TG)^{-1/2}$ is 
    $L_1n^{-1}$-Lipschitz, where $L_1 = \sqrt{K} m_*^{-2}m^*$. 

    For the second statement, the result follows by 
    \begin{align*}
        &\|G(G^TG)^{-1/2} - \tilde G(\tilde G^T\tilde G)^{-1/2}\|_{op}\\
        \le~& \|G - \tilde G\|_{op} \|(G^TG)^{-1/2}\|_{op} +
         \|\tilde G\|_{op} \|(G^TG)^{-1/2} - (\tilde G^T\tilde G)^{-1/2}\|_{op}\\
         \le~& \|G - \tilde G\|_{op} (m_* n)^{-1/2} + (m^* n)^{1/2} L_1 n^{-1} \|G - \tilde G\|_{op}\\
    \end{align*}
    Hence, 
    \begin{align*}
        &\|G(G^TG)^{-1/2} - \tilde G(\tilde G^T\tilde G)^{-1/2}\|_{F}\\
         \le~&  \sqrt{K} \bigl(m_* ^{-1/2} + (m^*)^{1/2} L_1\bigr) n^{-1/2}\|G - \tilde G\|_{F}, 
    \end{align*}
    where $L_2 = \sqrt{K} \bigl(m_* ^{-1/2} + (m^*)^{1/2} L_1\bigr)$. 
\end{proof}

\phantomsection 
\begin{proof}[Proof of \Cref{lem:dot-g}]\label{proof_dot_g}
    Recall the KKT conditions $\sum_{l=1}^n z_l g_l^T = \bm{0}_{p\times K}$. 
    We look for the derivative with respect to $z_{ij}$. 
    Denoting derivatives with a dot, we find by the chain rule and product rule 
    \begin{align*}
        \dot z_l 
        &= \frac{\partial z_l}{\partial z_{ij}} = I(l=i) e_j,\\
        \dot g_l 
        &= \frac{\partial g_l}{\partial z_{ij}} 
        = \frac{\partial g_l}{\partial \hat A^\top z_l} \frac{\partial \hat A^\top z_l}{\partial z_{ij}} 
        = H_l [\dot A^\top z_l + I(l=i)\hat A^\top e_j].
    \end{align*}
    Thus, differentiating the KKT conditions w.r.t. $z_{ij}$ by the product rule gives
    $$
    \sum_{l=1}^n 
    \bigl[ I(l=i) e_j g_l^T + x_l 
    \bigl(\dot A^\top z_l + I(l=i)\hat A^\top e_j\bigr)^T 
    H_l \bigr] 
    = 0.
    $$
    That is, 
    $$
    e_j g_i^T + \sum_{l=1}^n z_l z_l^T \dot A H_l + z_i e_j^T \hat A H_i = 0.
    $$
    We then move the term involving $\dot A$ to one side, and vectorize both sides, 
    $$
    g_i \otimes e_j + (H_i\hat A^Te_j \otimes z_i) = 
    - \sum_{l=1}^n (H_l \otimes z_lz_l^T)\vect(\dot A). 
    $$
    With $M = [\sum_{l=1}^n (H_l \otimes z_lz_l^T)]^{-1}$, we obtain 
    $$
    \vect(\dot A) = - M [g_i \otimes e_j + (H_i\hat A^Te_j \otimes z_i)]. 
    $$
    Hence, using $\vect(H_l \dot A^\top z_l) = \vect(z_l^T \dot A H_l) = (H_l \otimes z_l^T)\vect(\dot A)$ gives 
    \begin{align*}
        \dot g_l 
        &= (H_l \otimes z_l^T)\vect(\dot A) + I(l=i) H_l\hat A^T e_j\\
        &= - (H_l \otimes z_l^T) M [g_i \otimes e_j + (H_i\hat A^Te_j \otimes z_i)] + I(l=i) H_l\hat A^T e_j. 
    \end{align*}
    Thus, 
    \begin{align*}
        \dot g_i 
        &= - (H_i \otimes z_i^T) M [g_i \otimes e_j + (H_i\hat A^Te_j \otimes z_i)] + H_i\hat A^T e_j\\
        &= - (H_i \otimes z_i^T) M (H_i\hat A^Te_j \otimes z_i) + H_i\hat A^T e_j - (H_i \otimes z_i^T) M (g_i \otimes e_j)\\
        &= [H_i - (H_i \otimes z_i^T) M (H_i  \otimes z_i)]  \hat A^T e_j - (H_i \otimes z_i^T) M (g_i \otimes e_j)\\
        &= V_i \hat A^T e_j - (H_i \otimes z_i^T) M (g_i \otimes e_j), 
    \end{align*}
    where $V_i = [H_i - (H_i \otimes z_i^T) M (H_i  \otimes z_i)]$. 
    \end{proof}

\phantomsection 
\begin{proof}[Proof of \Cref{cor:dot-G-nomrlized}]
\label{proof_dot_G_normalized}
For each $i\in[n],j\in[p]$, we have by the product rule
\begin{align*}
    &\pdv{e_i^T G(G^TG)^{-1/2}}{z_{ij}} \\
    =~& \pdv{g_i^T}{z_{ij}} (G^TG)^{-1/2} + e_i^T G \pdv{(G^TG)^{-1/2}}{z_{ij}}\\
    =~& [V_i \hat A^T e_j - (H_i \otimes z_i^T) M (g_i \otimes e_j)]^T (G^TG)^{-1/2} + e_i^T G \pdv{(G^TG)^{-1/2}}{z_{ij}}\\
    =~& e_j^T\hat A V_i (G^TG)^{-1/2} 
    + 
    \Bigl[-(g_i^T \otimes e_j^T)M(H_i\otimes z_i)(G^TG)^{-1/2} 
    + e_i^T G \pdv{(G^TG)^{-1/2}}{z_{ij}}\Bigr].
\end{align*}
With $V = \sum_{i=1}^n V_i$, we further have 
\begin{align*}
    &\sum_{i=1}^n \pdv{e_i^T G(G^TG)^{-1/2}}{z_{ij}} \\
    =~& e_j^T\hat A V_i^T (G^TG)^{-1/2} 
    + 
    \sum_{i=1}^n \Bigl[-(g_i^T \otimes e_j^T)M(H_i\otimes z_i)(G^TG)^{-1/2} 
    + e_i^T G \pdv{(G^TG)^{-1/2}}{z_{ij}}\Bigr].
\end{align*}
\end{proof}

\section{Proof of \Cref{thm:normal}}
\label{sec:application_multinomial}
Recall that \Cref{thm:general_Sigma} holds for general loss function $L_i: \R^{K} \to \R$ provided that conditions (1) and (2) in \Cref{thm:general_Sigma} hold.
In this section, we consider the multinomial logistic loss function $L_i$ defined in \Cref{sec:model-K}. To be specific, 
\begin{equation}\label{eq:L_i}
    L_i(u) 
    = - \sum_{k=1}^{K+1} \y_{ik} e_k^T Qu + \log \sum_{k'=1}^{K+1} \exp(e_{k'}^T Qu), \qquad \forall u\in \R^K. 
\end{equation}
In order to apply \Cref{thm:general_Sigma}, we need to verify that, when $L_i$ in \eqref{eq:L_i} is used, the two conditions (1) and (2) in \Cref{thm:general_Sigma} hold. 
To this end, we present a few lemmas in the following two subsections, which will be useful for asserting the conditions (1) and (2) when we apply \Cref{thm:general_Sigma} to prove \Cref{thm:normal}. 

\subsection{Control of the singular values of the gradients and Hessians} 
Before stating the lemmas that assert the conditions in \Cref{thm:general_Sigma}, define
\begin{align*}
U &= 
    \bigl\{
        (Y, X) \in\R^{n\times (K+1)} \times \R^{n\times p}: \hat\B \text{ exists}, 
        \|X \hat \B (I_{K+1} - \tfrac{\bm{1}\bm{1}^T}{K+1})\|_F^2 < n \tau
    \bigr\}, \\
U_y &= \Bigl\{
        Y\in \R^{n\times (K+1)}:
            \sum_{i=1}^n I(\y_{ik}=1) \ge \gamma n \text{ for all } k\in[K+1] 
    \Bigr\}. 
\end{align*}

\begin{lemma}[deterministic result on gradient]
\label{lem:gradient>0}
Let $L_i$ be defined as in \eqref{eq:L_i}. 
{Assume that either \Cref{assu:one_hot} or \Cref{assu:one_hot_q} holds.}
If $Y\in U_y$, for any $M\in \R^{n\times K}$ such that $\|MQ^T\|_{F}^2\le n\tau$, 
    we have
    \begin{equation*} 
        m_* I_K \preceq 
        n^{-1} \sum_{i=1}^n  
        \nabla L_i (M^T e_i)
        \nabla L_i (M^T e_i)^T
        \preceq K I_K,
    \end{equation*}
        where $m_*$ is a positive constant depending on $(K,\gamma, \tau)$ only. 
\end{lemma}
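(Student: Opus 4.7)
The plan is to write $\nabla L_i(M^Te_i) = -Q^T r_i$ where $r_i = \y_i - p_i \in\R^{K+1}$, $p_i = \mathrm{softmax}(\eta_i)$, and $\eta_i = QM^T e_i$. Note that $r_i^T\bm 1 = 0$ (since both $\y_i$ and $p_i$ lie in the simplex under either \Cref{assu:one_hot} or \Cref{assu:one_hot_q}) and that $\sum_i \|\eta_i\|^2 = \|MQ^T\|_F^2 \le n\tau$ by the isometry $Q^TQ=I_K$. The claim is equivalent to sandwiching $\tfrac1n\sum_i r_i r_i^T$ between $m_*(I_{K+1}-\tfrac{\bm 1\bm 1^T}{K+1})$ and $K(I_{K+1}-\tfrac{\bm 1\bm 1^T}{K+1})$ as operators on the subspace $\{w:w^T\bm 1=0\}$.

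For the upper bound, since $\y_i$ and $p_i$ are probability vectors, $\|r_i\|_1\le 2$ and $\|r_i\|_\infty\le 1$, so $\|r_i\|^2\le \|r_i\|_1\|r_i\|_\infty\le 2$. Combining with $r_i r_i^T \preceq \|r_i\|^2 I_{K+1}$ and $Q^TQ = I_K$ yields $\tfrac1n\sum_i Q^T r_i r_i^T Q \preceq 2 I_K \preceq K I_K$ (recall $K\ge 2$).

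For the lower bound, define the threshold $T = \sqrt{2\tau/\gamma}$ and the ``bounded predictor'' set $I^* = \{i:\|\eta_i\|_\infty \le T\}$. Markov's inequality gives $|I^{*c}| \le n\tau/T^2 = n\gamma/2$. For $i\in I^*$, the softmax formula yields $p_{ik}\ge e^{-2T}/(K+1)=:\epsilon$ for every class $k\in[K+1]$. Now fix any unit $w\in\R^{K+1}$ with $w^T\bm 1=0$ and let $k^* = \arg\max_k w_k$; a short optimization argument over the set $\{w:\|w\|=1,\,w^T\bm 1=0\}$ shows $w_{k^*}\ge 1/\sqrt{K(K+1)}$ (minimized by the configuration with $K$ equal positive coordinates and one negative). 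For $i\in I_{k^*}\cap I^*$, where $I_{k^*} := \{i:\y_i=e_{k^*}\}$ has $|I_{k^*}|\ge \gamma n$ by $Y\in U_y$, one computes
\begin{equation*}
w^T r_i \;=\; w_{k^*}-w^T p_i \;=\; \sum_{k'\ne k^*}(w_{k^*}-w_{k'})\,p_{ik'} \;\ge\; \epsilon\sum_{k'\ne k^*}(w_{k^*}-w_{k'}) \;=\; \epsilon(K+1)\,w_{k^*},
\end{equation*}
using $\sum_{k'} w_{k'}=0$ in the last equality, and crucially the sign $w_{k^*}-w_{k'}\ge 0$ (from the $\arg\max$) together with $p_{ik'}\ge \epsilon$. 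Squaring and summing over $i\in I_{k^*}\cap I^*$ (of which there are at least $\gamma n/2$) yields $\sum_i (w^Tr_i)^2 \ge (\gamma n/2)\cdot\epsilon^2(K+1)/K$, so we may take $m_* = \gamma\,\epsilon^2(K+1)/(2K)$, which depends on $(K,\gamma,\tau)$ only.

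The main obstacle is the lower bound rather than the upper: two things must line up simultaneously, namely (i) enough of the observations $i$ have $\eta_i$ bounded so that $p_{ik}$ is bounded away from $0$ uniformly in $k$, and (ii) the chosen class $k^*$ is the \emph{argmax} of the $w_k$'s so that the telescoping identity $w^Tr_i = \sum_{k'\ne k^*}(w_{k^*}-w_{k'})p_{ik'}$ yields a sum of nonnegative terms. The balance condition $|I_{k^*}|\ge \gamma n$ (and hence $|I_{k^*}\cap I^*|\ge \gamma n/2$) is what allows the Markov-based truncation at $T=\sqrt{2\tau/\gamma}$ to leave enough observations of the right class intact.
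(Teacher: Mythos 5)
Your proof is correct, but the lower bound — the substantive half — is argued by a genuinely different route than the paper's. The paper first extracts $\gamma n$ disjoint transversal index sets $S_1,\dots,S_{\gamma n}$, each of size $K+1$ containing exactly one observation per class, discards (via the same Markov truncation you use) the tuples whose predictors are unbounded, writes the residual matrix of each surviving tuple as $I_{K+1}-\sfP_l$ up to a column permutation for a stochastic matrix $\sfP_l$ with entries in $[c,1-c]$, shows $(I_{K+1}-\sfP_l)Q$ has full rank $K$, and then invokes compactness of the set of such stochastic matrices to obtain a uniform but non-explicit constant $m_*$. You instead work one direction $w\perp\bm 1$ at a time, select the single class $k^*=\arg\max_k w_k$ (using the sharp bound $w_{k^*}\ge 1/\sqrt{K(K+1)}$, which is exactly the extremal configuration you describe), and lower-bound $(w^Tr_i)^2$ directly for the $\ge \gamma n/2$ observations of class $k^*$ with bounded predictors via the telescoping identity $w^Tr_i=\sum_{k'\ne k^*}(w_{k^*}-w_{k'})p_{ik'}$, whose terms are nonnegative by the choice of $k^*$. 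This buys you an explicit constant $m_*=\gamma\epsilon^2(K+1)/(2K)$, avoids the combinatorial matching into transversal tuples, and replaces the compactness step with a closed-form computation; the paper's tuple construction, in exchange, treats all classes symmetrically at once. Your upper bound ($\|r_i\|^2\le\|r_i\|_1\|r_i\|_\infty\le 2\le K$) matches the paper's in substance; note that both arguments, yours and the paper's, implicitly require $K\ge 2$ for the stated bound $KI_K$, which is consistent with the paper's three-or-more-classes setting, and you are right to flag it.
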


\begin{proof}[Proof of \Cref{lem:gradient>0}]
Without loss of generality, let's assume that $\gamma n$ is an integer. Otherwise, we can replace it with the greatest integer less than or equal to $\gamma n$, denoted as $\lfloor \gamma n \rfloor$.

If $Y \in U_y$, there exists at least $\gamma n$ many disjoint index sets $\{S_1, \ldots, S_{\gamma n}\}$ such that the following hold for each $l\in [\gamma n]$, 
$$
(i)~ S_{l} \subset [n]; 
(ii)~ |S_l| = K+1; 
(iii)~ \sum_{i\in S_l} \y_{ik} = 1, \quad \forall k \in [K+1]. 
$$
Since $S_l$ are disjoint and $\cup_{l=1}^{\gamma n} S_l \subset [n]$, we have
\begin{equation*}
    \sum_{l=1}^{\gamma n} \sum_{i\in S_l} \|Q M^T e_i\|^2 \le 
    \sum_{i=1}^{n} \|Q M^T e_i\|^2 = \|Q M^T\|_F^2 < n\tau. 
\end{equation*}
It follows that at most $\alpha n$ many of $l\in \{1, 2, ..., \gamma n\}$ 
s.t. $\sum_{i\in S_l} \|Q M^T e_i\|^2 > \tau/\alpha$, otherwise the previous display can not hold. 
In other words, there exists a subset $L^* \subset \{1, 2, ..., \gamma n\}$ with $|L^*| \ge (\gamma -\alpha) n$ s.t. $\sum_{i\in S_l} \|Q M^T e_i\|^2 \le \tau/\alpha$ for all $l\in L^*$. 
Define the index set $I = \cup_{l\in L^*} S_l $, then 
$|I|\ge (K+1) n(\gamma-\alpha)$, and 
$\|Q M^T e_i\|_{\infty} \le  \sqrt{\tau/\alpha}$ for all $i \in I$. 
Let us take $\alpha = \gamma/2$, then $|L^*| \ge \frac{\gamma}{2}n$ and $|I| \ge \gamma(K+1)n/2$. 
Recall that $L_i (u) = \L_i (Qu)$, we have 
$\nabla L_i (u) = Q^T \nabla \L_i (Qu)$. Thus, 
\begin{align*}
    \nabla L_i(M^T e_i)
    = Q^T \nabla \L_i (Q M^T e_i) 
    = Q^T (-\y_{i} + \p_i),
\end{align*}
where 
$\p_i\in \R^{K+1}$ and its $k$-th entry satisfying
\begin{equation}\label{eq:softmax}
    \p_{ik} 
    = \frac{\exp( e_k^T Q M^T e_i)}{\sum_{k'=1}^{K+1} \exp(e_{k'}^T Q M^T e_i)} 
\in [c, 1-c],
\end{equation}
for some constant $c\in (0, 1)$ depending on $(\tau, \alpha, K)$ only. 
Therefore, 
\begin{align*}
    &n^{-1} \sum_{i=1}^n  
        \nabla L_i (M^T e_i)
        \nabla L_i (M^T e_i)^T\\
    =~& n^{-1} Q^T \sum_{i=1}^n  (\y_i - \p_i) (\y_i - \p_i)^T
    Q\\
    \succeq~& 
    n^{-1} Q^T \sum_{l=1}^{\gamma n} \sum_{i\in S_l}  (\y_i - \p_i) (\y_i - \p_i)^T
    Q\\
    \succeq~& 
    n^{-1} Q^T \sum_{l\in L^*} \sum_{i\in S_l}  (\y_i - \p_i) (\y_i - \p_i)^T
    Q\\
    :=~& n^{-1} Q^T \sum_{l\in L^*} A_l^T A_l
    Q,
\end{align*}
where $A_l \in\R^{(K+1)\times (K+1)}$ has $K+1$ rows
$\{\y_i - \p_i: i\in S_l\}$. 
We further note that $A_l$ is of the form 
$ (I_{K+1} - \sfP_l)$
up to a rearrangement of the columns,
where $\sfP_l \in \R^{(K+1) \times (K+1)}$ is a stochastic matrix with entries of the form
$$
\frac{\exp( e_k^T QM^T e_i)}{\sum_{k'=1}^{K+1} \exp(e_{k'}^T QM^T e_i)}, \qquad i \in S_l,\quad k\in [K+1]. 
$$
By \eqref{eq:softmax}, for each $l\in L^*$, the stochastic matrix $\sfP_l$ is irreducible and aperiodic and $\ker(I_{K+1} - \sfP_l)$ is the span of the all-ones vector $\bm{1}$.
Therefore, 
$$
(I_{K+1} - \sfP_l) QQ^T 
= (I_{K+1} - \sfP_l) (I_{K+1} - \tfrac{\bm{1}\bm{1}^T}{K+1}) 
= (I_{K+1} - \sfP_l).
$$
It follows that 
$$
K = \rank(I_{K+1} - \sfP_l) 
= \rank((I_{K+1} - \sfP_l) Q Q^T)
\le \rank((I_{K+1} - \sfP_l)Q) \le K.
$$
We conclude that the rank of  
$(I_{K+1} - \sfP_l)Q$ is $K$.

If $\mathcal P$ denotes the set of matrices
$\{
\sfP\in\R^{(K+1) \times (K+1)}: \text{ stochastic with entries in } [c, 1-c] \}$, 
and $S^{K-1} = \{a\in \R^K: \|a\|=1\}$. 
By compactness of $\mathcal P$ and $S^{K-1}$, we obtain  
\begin{align*}
    &\frac 1n \lambda_{\min} (\sum_{i=1}^n\nabla L_i (M^T e_i)
    \nabla L_i (M^T e_i)^T) \\
    ~\ge&  \frac 1n \sum_{l\in L^*} \lambda_{\min}(Q^T A_l^T A_l Q)\\
    ~\ge&  \frac 1n \sum_{l\in L^*} \min_{a\in S^{K-1}} a^T Q^T A_l^T A_l Q a\\
    ~\ge&  \frac 1n |L^*|
    \min_{a\in S^{K-1}, \sfP\in \mathcal P} a^T Q^T (I_{K+1} - \sfP)^T (I_{K+1} - \sfP) Q a\\
    ~\ge & \frac{\gamma}{2} a_*^T Q^T (I_{K+1} - \sfP_*)^T (I_{K+1} - \sfP_*) Q a_*\\
    ~\ge & \frac{\gamma}{2}  a_*^T Q^T (I_{K+1} - \sfP_*)^T (I_{K+1} - \sfP_*) Q a_*\\
    ~:= & m_*,
\end{align*}
where
$a_*\in S^{K-1}$, $\sfP_* \in \mathcal P$, and 
$m_*$ is a positive constant depending on $(K, \gamma, \tau)$ only. 
The first inequality above uses the property $\lambda_{\min}(A + B) \ge \lambda_{\min}(A) + \lambda_{\min}(B)$, where $A$ and $B$ are two positive semi-definite matrices.

In other words, $\frac 1n \sum_{i=1}^n \nabla L_i (M^T e_i)
\nabla L_i (M^T e_i)^T\succeq m_* I_{K}$. 
For the upper bound, since $\|Q\|_{op}\le 1$ by definition of $Q$ and all the entries of $(\y_i - \p_i) (\y_i - \p_i)^T$ are between $-1$ and $1$ {if \Cref{assu:one_hot} or \Cref{assu:one_hot_q} holds}, we have 
$$
\|\sum_{i=1}^n \nabla L_i (M^T e_i)
\nabla L_i (M^T e_i)^T\|_{op} 
= \| Q^T \sum_{i=1}^n  (\y_i - \p_i) (\y_i - \p_i)^T
Q\|_{op} 
\le nK.$$ 
\end{proof}

\begin{lemma}[deterministic result on Hessian]\label{lem:Hessian>0}
Let $L_i$ be defined as in \eqref{eq:L_i}. 
For all $i\in [n]$, we have 
$\nabla^2 L_i(u) \preceq I_K$ for any $u\in\R^{K}$ and 
\begin{equation*}
    \min_{u \in \R^K, \|Qu\|_{\infty}\le r}\nabla^2 L_i (u) 
    \succeq c_* I_K, 
\end{equation*}
where $c_*$ is a positive constant depending on $(K, r)$ only.
\end{lemma}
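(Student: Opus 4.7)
By the chain rule applied to $L_i(u)=\L_i(Qu)$ with $\L_i$ the cross-entropy loss,
$$\nabla^2 L_i(u) = Q^T\bigl(\diag(\p(Qu)) - \p(Qu)\p(Qu)^T\bigr) Q,$$
where $\p(Qu)\in\R^{K+1}$ is the softmax vector whose $k$-th entry is $\exp(e_k^TQu)/\sum_{k'}\exp(e_{k'}^TQu)$. So the problem reduces to sandwiching the covariance-of-categorical matrix $M(p):=\diag(p)-pp^T$ between appropriate multiples of $I_{K+1}$, conjugating by $Q$, and using $Q^TQ=I_K$.

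For the upper bound, note that for any $v\in\R^{K+1}$ we have $v^TM(p)v=\sum_k p_kv_k^2-(\sum_kp_kv_k)^2 \le \sum_kp_kv_k^2\le \|v\|^2$ (since $p_k\in[0,1]$ and $\sum_kp_k=1$). Thus $M(p)\preceq I_{K+1}$, which gives $\nabla^2 L_i(u)\preceq Q^TQ=I_K$ as required.

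For the lower bound, first observe that if $\|Qu\|_\infty\le r$, each softmax entry satisfies
$$
\p_k(Qu)\ge \frac{e^{-r}}{(K+1)e^{r}} = \frac{e^{-2r}}{K+1}=:c>0,
$$
a positive constant depending only on $(K,r)$. Next, use the symmetric identity $v^TM(p)v=\tfrac12\sum_{k,k'}p_kp_{k'}(v_k-v_{k'})^2$. Apply this to $v=Qa$ with arbitrary $a\in\R^K$, and crucially exploit that $Q^T\bm 1_{K+1}=0$ (from \eqref{Q}) so that $\bm 1_{K+1}^T(Qa)=0$ and $\|Qa\|^2=\|a\|^2$. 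Writing $w=Qa$, this gives
$$
a^T\nabla^2L_i(u)a = \tfrac12\sum_{k,k'}p_kp_{k'}(w_k-w_{k'})^2 \ge \tfrac{c^2}{2}\sum_{k,k'}(w_k-w_{k'})^2 = c^2\bigl((K+1)\|w\|^2-(\sum_kw_k)^2\bigr)=c^2(K+1)\|a\|^2,
$$
so we may take $c_*=c^2(K+1)=e^{-4r}/(K+1)$.

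There is no real obstacle here: the only point requiring care is making sure the bound is uniform in $a$ and remains strictly positive after conjugation by $Q$. That is handled precisely by the identity $\sum_kw_k=0$ (via $Q^T\bm 1=0$) combined with $\|Qa\|=\|a\|$, which guarantee the ``centering'' correction $(\sum_kp_kw_k)^2$ cannot consume all the mass of $\sum_kp_kw_k^2$ when one uses the symmetric form.
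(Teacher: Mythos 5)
Your proof is correct, and the lower-bound argument is genuinely different from the paper's. For the upper bound both arguments are essentially the same ($\diag(\p)-\p\p^T\preceq\diag(\p)\preceq I_{K+1}$, then conjugate by $Q$ and use $Q^TQ=I_K$). For the lower bound, the paper proceeds non-quantitatively: it shows $a^T\nabla^2L_i(u)a>0$ strictly for each unit $a$ via the equality case of Cauchy--Schwarz (using $\bm 1^T Qa=0$ and $\p_{ik}\in[c,1-c]$), and then invokes compactness of the set $\{Q^T(\diag(\p)-\p\p^T)Q:\p\in[c,1-c]^{K+1}\}$ and of the unit sphere to upgrade strict positivity to a uniform constant $c_*$ depending only on $(K,r)$ — so $c_*$ is not explicit. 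You instead use the variance identity $v^T(\diag(p)-pp^T)v=\tfrac12\sum_{k,k'}p_kp_{k'}(v_k-v_{k'})^2$ (valid since $\sum_kp_k=1$), bound each softmax entry below by $e^{-2r}/(K+1)$, and exploit $\bm 1^T Qa=0$ together with $\|Qa\|=\|a\|$ to evaluate $\tfrac12\sum_{k,k'}(w_k-w_{k'})^2=(K+1)\|w\|^2-(\sum_kw_k)^2=(K+1)\|a\|^2$ exactly. This yields the explicit constant $c_*=e^{-4r}/(K+1)$, avoids the compactness step entirely, and is arguably preferable since downstream results track how constants depend on $(K,r)$; the paper's route is shorter to state but delivers only an existential $c_*$. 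All steps in your argument check out.
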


\begin{proof}[Proof of \Cref{lem:Hessian>0}] 
Recall that $L_i (u) = \L_i (Qu)$, we have 
\begin{align*}
    \nabla^2 L_i (u) = Q^T \nabla^2 \L_i (Qu) Q, 
\end{align*}
where 
$\nabla^2 \L_i (Qu) 
= \diag(\p_i) - \p_i \p_i^T$
and the $k$-th entry of $\p_i\in\R^{K+1}$ is defined as 
$\p_{ik} = \frac{\exp(e_k^T Q u)}{\sum_{k'=1}^{K+1}\exp(e_{k'}^T Q u)} $ for all $i\in[n], k\in[K+1]$. 
Thus, $\p_{ik} \le 1$ for all $i\in[n], k\in[K+1]$. 
For any vector $a\in S^{K-1}$, we have
\begin{align*}
    a^T \nabla^2 L_i (u)  a
    =&a^T Q^T [\diag(\p_i) - \p_i \p_i^T] Q a \\
    \le & a^T Q^T  \diag(\p_i) Qa \\
    \le & \|Q^T \diag(\p_i) Q\|_{op} \\
    \le & 1,
\end{align*}
where the last inequality uses $\|Q\|_{op} \le 1$ and $\p_{ik}\le 1$ for any $k$. 
Hence, $\nabla^2 L_i (u) \preceq I_K$ for any $i\in [n]$.

Now we prove the lower bound. 
For any $u\in \R^{K}$ such that $\|Qu\|_{\infty}\le r$, we have 
$$\p_{ik}=\frac{\exp(e_k^T Q u)}{\sum_{k'=1}^{K+1}\exp(e_{k'}^T Q u)}\in [c, 1-c]$$ 
for some constant $c$ depending on $(K, r)$ only. 
For any vector $a \in S^{K-1}$, let $\eta = Q a \in \R^{K+1}$, then $\bm{1}^T \eta =0$ and 
    \begin{align*}
        a^T \bigl[\nabla^2 L_i (u)\bigr] a
        =&a^T Q^T [\diag(\p_i) - \p_i \p_i^T] Q a \\
        =& \eta^T [\diag(\p_i) - \p_i \p_i^T] \eta \\
        =& \sum_{k=1}^{K+1} \p_{ik} \eta_k^2 - (\sum_{k=1}^{K+1} \p_{ik} \eta_k)^2\\
        >& \sum_k \p_{ik} \eta_k^2 - \sum_k \p_{ik}\eta_k^2 \sum_k \p_{ik}\\
        =& \sum_k \p_{ik} \eta_k^2 (1 - \sum_k \p_{ik})\\
        =& 0,
    \end{align*}
    where the last equality uses $\sum_{k=1}^{K+1} \p_{ik} =1$, 
    and the inequality follows by 
    $(\sum_k \p_{ik} \eta_k)^2 = (\sum_k \sqrt{\p_{ik}} \sqrt{\p_{ik}} \eta_k)^2 \le \sum_k \p_{ik} \sum_k \p_{ik}\eta_k^2$ using the Cauchy-Schwarz inequality, 
    and here $``="$ holds if and only if $\sqrt{\p_{ik}} \propto \sqrt{\p_{ik}} \eta_k$ for each $k$, 
    which is not true since $\p_{ik} \in [c, 1-c]$ and $\bm{1}^T \eta =0$.

    Let $\mathcal H = \{Q^T (\diag(\p) - \p \p^T) Q:  \p\in [c, 1-c]^{K+1}\}$, then $\mathcal H$ is compact and 
    \begin{align*}
        \min_{u \in \R^K, \|Qu\|_{\infty}\le r} 
        \lambda_{\min} (\nabla^2 L_i (u)) 
        \ge  
        \min_{a\in S^{K-1}, H\in \mathcal H} a^T H a
        = a_*^T H_* a_*>0
    \end{align*}
    for some $a_*\in S^{K-1}$ and $H_*\in \mathcal H$. 
    Therefore, 
    $$
    \min_{u \in \R^K, \|Qu\|_{\infty}\le r} 
   \nabla^2 L_i (u))
    \succeq c_* I_K
    $$
    where $c_*$ is a positive constant depending on $(K, r)$ only. 
\end{proof}

\subsection{Lipschitz conditions}
We first restate the definitions of following sets, 
\begin{align*}
    U &= 
        \bigl\{
            (Y, X) \in\R^{n\times (K+1)} \times \R^{n\times p}: \hat\B \text{ exists}, 
            \|X \hat \B (I_{K+1} - \tfrac{\bm{1}\bm{1}^T}{K+1})\|_F^2 < n \tau
        \bigr\}, \\
    U_y &= \Bigl\{
        Y\in \R^{n\times (K+1)}:
            \sum_{i=1}^n I(\y_{ik}=1) \ge \gamma n \text{ for all } k\in[K+1] 
    \Bigr\}
    \end{align*}

\begin{lemma}\label{lem:XtoG-Lipschitz}
Assume $p/n \le \delta^{-1} < 1-\alpha$ for some $\alpha \in (0, 1)$ and $\delta >1$. 
Let $\mathcal I = \{I\subset [n]: |I| = \lceil n (1 - \alpha)\rceil \}$ and $P_I = \sum_{i\in I} e_i e_i^T$. 
Define 
\begin{align}
    \label{U_x}
    U_x = \bigl\{X\in \R^{n\times p}: 
        \min_{I\in \mathcal I} \lambda_{\min}(\tfrac{\Sigma^{-1/2}X^T P_I X \Sigma^{-1/2}}{{n}}) \ge \phi_*^{2}, 
        \tfrac{\|X \Sigma^{-1/2}\|_{op}}{\sqrt{n}} \le \phi^*
    \bigr\}
\end{align}


for some positive constants $\phi_*, \phi^*$, which depend on $(\delta, \alpha)$ only. 
Let $U^* = \{(Y, X)\in U: Y\in U_y, X\in U_x\}$. 
Then under \Cref{assu:X,assu:Y,assu:MLE},
{and if either \Cref{assu:one_hot} or \Cref{assu:one_hot_q} holds, }
we have
\begin{enumerate}
    \item $\P((Y, X) \in U^*) \to 1$ as $n, p\to \infty$. 
    \item  
    Let $G$ be defined in \Cref{assum:general_loss}. 
    If $\{(Y, X),(Y, \tilde X)\}  \subset U^*$,
    we have 
    $$
    \|G(Y, X) - G(Y, \tilde X)\|_F 
    \le L \|(X - \tilde X)\Sigma^{-1/2}\|_F, 
    $$
    where $L$ is a positive constant depending on $(K, \gamma, \tau,\alpha)$ only. 
\end{enumerate}

\end{lemma}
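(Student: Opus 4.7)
The plan is to prove the two parts in turn. For part (1), we decompose $U^* = \{(Y,X) \in U\} \cap \{Y \in U_y\} \cap \{X \in U_x\}$. The first two sets have probability tending to one by \Cref{assu:MLE} and \Cref{assu:Y} respectively, so the bulk of the work is to show $\P(X \in U_x) \to 1$. The matrix $Z = X\Sigma^{-1/2}$ has \iid $N(\bm{0}, I_p)$ rows and satisfies $\Sigma^{-1/2} X^T P_I X \Sigma^{-1/2} = Z^T P_I Z$ and $\|X\Sigma^{-1/2}\|_{op} = \|Z\|_{op}$. The operator norm bound follows from Davidson-Szarek: $\|Z\|_{op} \le \sqrt n + \sqrt p + t$ with probability at least $1 - 2e^{-t^2/2}$, giving $\phi^* = 1 + 1/\sqrt\delta + o(1)$. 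For the subset-wise lower bound, for fixed $I \in \mathcal I$ the submatrix $P_I Z$ restricted to rows in $I$ is $m \times p$ Gaussian with $m = \lceil n(1-\alpha)\rceil \ge p$, and Davidson-Szarek yields $\sigma_{\min}(P_I Z)^2 \ge n\phi_*^2$ with probability $1 - 2e^{-nc}$ for any $\phi_* < \sqrt{1-\alpha} - 1/\sqrt\delta$. A union bound over $|\mathcal I| \le \binom{n}{\lceil n\alpha \rceil} \le e^{n H(\alpha)}$ succeeds provided $\alpha$ is small enough that $c > H(\alpha)$; we are free to shrink this internal constant $\alpha$ while retaining $\delta^{-1} < 1-\alpha$.

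For part (2), we use the derivative formula from \Cref{lem:dot-g}. The change of variable $Z = X\Sigma^{-1/2}$, $\hat A = \Sigma^{1/2} \hat B$ leaves $g_i = \nabla L_i(\hat B^T x_i) = \nabla L_i(\hat A^T z_i)$ invariant, and \Cref{lem:dot-g} applies verbatim with $z_i = \Sigma^{-1/2} x_i$. On $U^*$ we bound each ingredient of $\partial g_l / \partial z_{ij} = -(H_l \otimes z_l^T) M [g_i \otimes e_j + (H_i \hat A^T e_j \otimes z_i)] + I(l=i) H_l \hat A^T e_j$: $\|H_l\|_{op} \le 1$ by \Cref{lem:Hessian>0}, $\|g_l\| \le \sqrt 2$ since $\y_i$ and $\hat p_i$ are probability vectors, and $\|Z\|_{op} \le \phi^* \sqrt n$ by the definition of $U_x$. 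The crucial bound is on $\|M\|_{op}$ where $M = [\sum_l H_l \otimes z_l z_l^T]^{-1}$. The Frobenius bound in $U$ gives $\|X\hat\B(I_{K+1} - \bm 1 \bm 1^T/(K+1))\|_F^2 < n\tau$, so at most $\alpha n/2$ indices $i$ satisfy $\|Q\hat B^T x_i\|_\infty > \sqrt{2\tau/\alpha}$; on the complementary ``good'' set $I_*$ of size at least $n(1-\alpha/2)$, \Cref{lem:Hessian>0} gives $H_i \succeq c_* I_K$. Hence $\sum_l H_l \otimes z_l z_l^T \succeq c_* I_K \otimes \sum_{i \in I_*} z_i z_i^T \succeq c_* \phi_*^2 n \cdot I_{Kp}$, where the last inequality uses the $U_x$ condition applied to any superset of $I_*$ of size $\lceil n(1-\alpha)\rceil$. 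Substitution yields $\|M\|_{op} \le (c_*\phi_*^2 n)^{-1}$, and assembling these bounds gives a uniform operator-norm bound on the Jacobian $\partial \vect(G)/\partial \vect(Z)$ by a constant $L$ depending only on $(K, \gamma, \tau, \alpha)$.

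To promote this pointwise Jacobian bound to the pairwise Lipschitz inequality stated in the lemma, we argue that the derivative bound extends with uniform constant to a convex open neighborhood of $U^*$: the set $U_x$ is already convex in $X$ (intersection of an operator-norm ball with level sets of concave functions $X \mapsto \lambda_{\min}(\Sigma^{-1/2}X^TP_IX\Sigma^{-1/2})$), and on a convex enlargement where the aggregated Hessian $\sum_l H_l \otimes z_l z_l^T$ remains uniformly positive-definite (guaranteed by the same good-set/bad-set split together with continuity of $\hat B$ in $X$ via the implicit function theorem), the derivative bound continues to hold. The fundamental theorem of calculus along the straight-line segment from $X$ to $\tilde X$, pulled back through $Z = X\Sigma^{-1/2}$, then delivers the stated Lipschitz inequality. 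The main technical obstacle will be precisely this extension step, together with the calibration of $\alpha$ in part (1): both require matching concentration or continuity rates to the geometry of the relevant sets, but all other estimates follow mechanically from the deterministic inequalities in \Cref{lem:Hessian>0,lem:gradient>0} and standard Gaussian concentration.
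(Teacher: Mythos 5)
Your part (2) takes a genuinely different route from the paper, and that is where the real gap lies. The paper never differentiates: from the KKT conditions $X^TG=\tilde X^T\tilde G=0$ it derives the two-point identity
$\langle X\hat B-\tilde X\tilde B,\,G-\tilde G\rangle=-\langle(X-\tilde X)(\hat B-\tilde B),G\rangle+\langle(X-\tilde X)\hat B,\,G-\tilde G\rangle$,
lower-bounds the left-hand side both by $\|G-\tilde G\|_F^2$ (monotonicity of $\nabla L_i$, which has $1$-Lipschitz gradient by \Cref{lem:Hessian>0}) and by $c_*\|P_I(X\hat B-\tilde X\tilde B)\|_F^2$ (local strong convexity on the Markov ``good set''), and then solves for $\|G-\tilde G\|_F$. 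This works for an arbitrary pair of points of $U^*$ and needs no path between them. Your plan---bound the Jacobian $\partial G/\partial Z$ via \Cref{lem:dot-g} and integrate along the segment from $X$ to $\tilde X$---requires the derivative bound (indeed, the very existence and differentiability of $\hat B$, the invertibility of $\sum_l H_l\otimes z_lz_l^T$, and the bound $\|X\hat \B(I_{K+1}-\bm 1\bm 1^T/(K+1))\|_F^2<n\tau$) to hold along the entire segment, and the justification you offer does not hold up. The map $X\mapsto\lambda_{\min}(\Sigma^{-1/2}X^TP_IX\Sigma^{-1/2})$ is a concave function composed with a quadratic map, and its superlevel sets are not convex (already for $n=p=1$ the set $\{x: x^2\ge c\}$ is disconnected), so $U_x$ is not convex; the set $U$ has no convexity either. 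Without a path staying in a region where all these quantities are controlled, the fundamental-theorem-of-calculus step is unjustified, so the pairwise Lipschitz bound does not follow from the pointwise Jacobian bound.

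Part (1) also has a quantitative gap. The naive union bound over the $\binom{n}{\lceil n\alpha\rceil}\approx e^{nH(\alpha)}$ subsets beats the single-subset Davidson--Szarek failure probability only when $\sqrt{1-\alpha}-\sqrt{\delta^{-1}}>\sqrt{2H(\alpha)}$, which is not implied by the hypothesis $\delta^{-1}<1-\alpha$; it fails for instance when $\delta$ is close to $1$ with the choice $\alpha=1-\delta^{-1/2}$ actually used downstream in the proof of \Cref{thm:normal}. You cannot simply ``shrink the internal $\alpha$'': the same $\alpha$ fixes the subset size in $\mathcal I$ and the Markov threshold $2\tau/\alpha$ in part (2), and the lemma must be proved for the $\alpha$ given. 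The paper sidesteps this by invoking Lemma 7.7 of \cite{bellec2022observable}, which establishes the uniform restricted lower eigenvalue over all subsets of size $\lceil n(1-\alpha)\rceil$ for the full range $p/n\le\delta^{-1}<1-\alpha$; some such dedicated argument (not a crude union bound) is needed here.
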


\begin{proof}[Proof of \Cref{lem:XtoG-Lipschitz}]
We first prove statement (i). 
Under \Cref{assu:X}, \cite[Lemma 7.7]{bellec2022observable} implies 
$$
\P\bigl(\min_{I\in \calI}  
\lambda_{\min}(\tfrac{\Sigma^{-1/2}X^T P_I X \Sigma^{-1/2}}{{n}})
\ge \phi_*^{2}\bigr) 
\to 1
$$
for some positive constant $\phi_*$ depending on $(\delta, \alpha)$ only. 
Furthermore, \cite[Theorem II.13]{DavidsonS01} implies 
$$\P(\tfrac{\|X \Sigma^{-1/2}\|_{op}}{\sqrt{n}} \le \phi^*) \to 1$$
for some positive constant $\phi^*$ depending on $\delta$ only. 
Therefore, 
$\P(X\in U_x) \to 1$. 
Under \Cref{assu:Y}, we have 
$\P(Y\in U_y)\to 1$. 
Under \Cref{assu:MLE}, we have 
$\P((Y, X) \in U) \to 1$. 
In conclusion, under \Cref{assu:X,assu:Y,assu:MLE}, we have 
$\P((Y, X) \in U^*) \to 1$ as $n, p\to \infty$. 
    
Now we prove the statement (ii). 
For a fixed $Y$, let $(Y, X), (Y, \tilde X) \in U^*$,  
$\hat B, \tilde B$ be their corresponding minimizers of \eqref{eq:mle}, 
and $G, \tilde G$ be their corresponding gradient matrices. 
We first provide some useful results derived from the KKT conditions. 
From the KKT conditions $X^T G = \tilde X^T \tilde G = 0$, we have
\begin{align*}
    \langle X\hat B - \tilde X \tilde B, G - \tilde G \rangle 
    = & \langle \hat B - \tilde B, \tilde X^T \tilde G - X^T G \rangle
    +
    \langle X\hat B - \tilde X \tilde B, G - \tilde G \rangle  \\
    = & - \langle (X - \tilde X)(\hat B - \tilde B), G\rangle + \langle (X - \tilde X) \hat B, G - \tilde G\rangle. 
\end{align*}
Since $\|\nabla^2 L_i(u)\|_{op} \le 1$ for any $u\in\R^{K}$ from \Cref{lem:Hessian>0}, $\nabla L_i(\cdot)$ is 1-Lipschitz. Thus,  
\begin{align*}
    \langle X\hat B - \tilde X \tilde B, G - \tilde G\rangle
    =& \sum_{i=1}^n \langle \hat B^T x_i - \tilde B^T \tilde x_i, \nabla L_i(\hat B^T x_i) - \nabla L_i(\tilde B^T \tilde x_i)\rangle \\
    \ge& \sum_{i=1}^n \langle \nabla L_i(\hat B^T x_i) - \nabla L_i(\tilde B^T \tilde x_i), \nabla L_i(\hat B^T x_i) - \nabla L_i(\tilde B^T \tilde x_i)\rangle\\
    =& \|G - \tilde G\|_F^2. 
\end{align*}
If $(Y, X), (Y, \tilde X) \in U$, we have 
$\|X\hat B Q^T \|_F^2 + \|\tilde X\tilde B Q^T \|_F^2\le 2n \tau$. 
That is, 
$$
\sum_{i=1}^n 
\bigl(
    \|Q \hat B^T x_i\|^2 + \|Q \tilde B^T \tilde x_i\|^2
\bigr)
\le 2n\tau. 
$$
Define the index set 
$$
I = \{i\in [n]: \|Q \hat B^T x_i\|^2 + \|Q \tilde B^T \tilde x_i\|^2 \le \tfrac{2\tau}{\alpha}\},
$$
then we have $|I| \ge (1-\alpha)n $ by Markov's inequality. 
Thus, for all $i\in I$, we have 
$\|Q \hat B^T x_i\|_{\infty} \vee \|Q \tilde B^T \tilde x_i\|_{\infty} \le \sqrt{\tfrac{2\tau}{\alpha}}$. 

Applying \Cref{lem:Hessian>0} with $r = \sqrt{\tfrac{2\tau}{\alpha}}$ gives 
$$
\min_{\|Q u\|_{\infty} \le \sqrt{\tfrac{2\tau}{\alpha}}} 
\nabla^2 L_i(u) \succeq c_* I_K, 
$$
where $c_*$ is a constant depending on $(K, \tau, \alpha)$. 
Therefore, 
\begin{align*} 
    c_*  \|P_I (X\hat B - \tilde X \tilde B)\|_F^2 =
     & c_* \sum_{i\in I} \|\hat B^T x_i - \tilde B^T \tilde x_i\|^2 \\
    \le & \sum_{i\in I}
        \langle \hat B^T x_i - \tilde B^T \tilde x_i , 
        \nabla L_i(\hat B^T x_i) - \nabla L_i(\tilde B^T \tilde x_i) 
        \rangle \\
    \le & \sum_{i=1}^n
    \langle \hat B^T x_i - \tilde B^T \tilde x_i , 
    \nabla L_i(\hat B^T x_i) - \nabla L_i(\tilde B^T \tilde x_i) 
    \rangle \\
    = & \langle X\hat B - \tilde X \tilde B, G - \tilde G \rangle \\
    = & - \langle (X - \tilde X)(\hat B - \tilde B), G\rangle + \langle (X - \tilde X) \hat B, G - \tilde G\rangle. 
\end{align*}
We next bound the first line from below by 
expanding the squares,
\begin{align*}
    \|P_I (X\hat B - \tilde X \tilde B)\|_F^2 
    &= \|P_I \tilde X (\hat B - \tilde B) + P_I (X - \tilde X) \hat B\|_F^2 \\
    &\ge \|P_I \tilde X (\hat B - \tilde B)\|_F^2 +
    2 \langle P_I \tilde X (\hat B - \tilde B), P_I (X - \tilde X) \hat B \rangle  \\
    &\ge n \phi_*^2 \|\Sigma^{1/2} (\hat B - \tilde B)\|_F^2 + 2 \langle \tilde X (\hat B - \tilde B), P_I (X - \tilde X) \hat B \rangle,
\end{align*}
where in the last inequality we use the constant $\phi^*$ in \eqref{U_x}.
Therefore, we obtain 
\begin{align*}
    &c_*\phi_*^2 n \|\Sigma^{1/2} (\hat B - \tilde B)\|_F^2\\
    \le & - \langle (X - \tilde X)(\hat B - \tilde B), G\rangle + \langle (X - \tilde X) \hat B, G - \tilde G\rangle
    - 2 c_*\langle \tilde X (\hat B - \tilde B), P_I (X - \tilde X) \hat B \rangle. 
\end{align*}
Together with the inequality that 
$\|G - \tilde G\|_F^2 
\le 
\langle X\hat B - \tilde X \tilde B, G - \tilde G \rangle, 
$
we obtain 
\begin{align*}
    &c_* \phi_*^2 n \|\Sigma^{1/2} (\hat B - \tilde B)\|_F^2 + \|G - \tilde G\|_F^2 \\
    \le & - 2\langle (X - \tilde X)(\hat B - \tilde B), G\rangle + 2 \langle (X - \tilde X) \hat B, G - \tilde G\rangle
    - 2 c_*\langle \tilde X (\hat B - \tilde B), P_I (X - \tilde X) \hat B \rangle \\
    \le & (4 + 2c_*\phi^* ) \|(X - \tilde X)\Sigma^{-1/2}\|_{op}
    \bigl(\|\Sigma^{1/2}(\hat B - \tilde B)\|_F \vee \frac{\|G - \tilde G\|_F}{\sqrt{n}}\bigr) \bigl(\|\Sigma^{1/2} \hat B\|_F \vee \frac{\|G\|_{op}}{\sqrt{n}}\bigr) \sqrt{n},
\end{align*}
where we bound $\langle \tilde X (\hat B - \tilde B), 
P_I (X - \tilde X) \hat B \rangle$ by definition of $\phi^* $, 
\begin{align*}
    &   \langle \tilde X (\hat B - \tilde B), 
        P_I (X - \tilde X) \hat B \rangle \\
    =&  \langle \Sigma^{1/2} (\hat B - \tilde B), 
        \Sigma^{-1/2}\tilde X^T P_I (X - \tilde X) \hat B \rangle \\
    \le& \|\Sigma^{1/2} (\hat B - \tilde B)\|_F 
        \|P_I \tilde X \Sigma^{-1/2}\|_{op}
        \|(X - \tilde X) \Sigma^{-1/2}\|_{op} \|\Sigma^{1/2}\hat B\|_F\\
    \le& \sqrt{n} \phi^*  \|\Sigma^{1/2} (\hat B - \tilde B)\|_F 
    \|(X - \tilde X) \Sigma^{-1/2}\|_{op} \|\Sigma^{1/2}\hat B\|_F. 
\end{align*}
Now we derive a bound of the form $\|\Sigma^{1/2}\hat B\|_F  \lesssim \|G\|_F/\sqrt{n}$. 
To this end, since 
$\phi_*\|\Sigma^{1/2}\hat B\|_F 
\le \|P_I X\hat B\|_F/\sqrt{n} 
\le \|X \hat B\|_F/\sqrt{n} 
= \|X\hat B Q^T\|_F/\sqrt{n} 
\le \sqrt{\tau}$. 

Applying \Cref{lem:gradient>0} to $M = X\hat B$, we have $\frac{1}{n} \sum_{i=1}^n g_i g_i^T \succeq m_* I_K$. 
Therefore, 
\begin{align*}
    \frac 1n\|G\|_F^2 
    = \frac 1n\sum_{i=1}^n \|g_i\|^2
    = \frac 1n \sum_{i=1}^n \trace(g_i g_i^T)
    \ge K m_*. 
\end{align*}
This implies that 
\begin{align*}
    \phi_*^2 \|\Sigma^{1/2} \hat B\|_F^2 \le \tau \le \frac{\tau}{K m_*(I)} \|G\|_F^2/n. 
\end{align*}
In conclusion, if $\{(Y,X), (Y,\tilde X)\} \subset U^*$ then 
\begin{align*}
    \sqrt{n}\|\Sigma^{1/2} (\hat B - \tilde B)\|_F + \|G - \tilde G\|_F \le Cn^{-1/2} \|(X - \tilde X) \Sigma^{-1/2}\|_{op} \|G\|_{F}
    \le C K \|(X - \tilde X) \Sigma^{-1/2}\|_{op}, 
\end{align*}
where $C$ is a constant depending on $(K, \gamma, \tau, \alpha)$ only. 
Note that $\|G\|_F \le \sqrt{nK}$ since all entries of $G$ are in $[-1,1]$. 
\end{proof}

\begin{lemma}\label{lem:HX}
    If $p/n\le \delta^{-1} < (1-\alpha)$ for some $\alpha \in (0, 1)$ and $\delta >1$. 
    If $(Y, X)\in U$ and $X\in U_x$, where $U_x$ is defined in \Cref{lem:XtoG-Lipschitz}, 
    we have 
    $$\frac 1n \sum_{i=1}^n H_i \otimes (x_i x_i^T) \succeq c_1 
    (I_K \otimes \Sigma), $$
    where $c_1$ is a positive constant depending on 
    $(K,\tau, \alpha, \phi_*)$ only. 
    \end{lemma}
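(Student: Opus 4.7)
The plan is to split the sum $\sum_{i=1}^n H_i\otimes(x_ix_i^T)$ by restricting to a suitably chosen ``good'' subset of indices $I\subset[n]$ of size at least $\lceil n(1-\alpha)\rceil$ on which the Hessians $H_i=Q^T\H_i Q$ admit a uniform lower bound $H_i\succeq c_* I_K$. The two events $(Y,X)\in U$ and $X\in U_x$ will each contribute one ingredient: $U$ provides the bounded energy $\|X\hat B Q^T\|_F^2\le n\tau$ that identifies the good indices, while $U_x$ provides the lower spectral bound on $n^{-1}X^T P_I X$ needed to conclude.

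Construction of the good set: since $(Y,X)\in U$ gives $\sum_{i=1}^n\|Q\hat B^T x_i\|^2=\|X\hat B Q^T\|_F^2\le n\tau$, an elementary counting (Markov) argument shows that the set
\[
I=\{i\in[n]:\|Q\hat B^T x_i\|^2\le \tau/\alpha'\}
\]
satisfies $|I|\ge(1-\alpha')n$ for any $\alpha'\in(0,1)$. Choosing $\alpha'$ a shade smaller than $\alpha$ (and enlarging $n$ if needed by a constant factor absorbed into the final constants) guarantees $|I|\ge \lceil n(1-\alpha)\rceil$, so that $I\in\mathcal I$. For $i\in I$ we have $\|Q\hat B^T x_i\|_\infty\le\|Q\hat B^T x_i\|\le r$ with $r=\sqrt{\tau/\alpha'}$, and \Cref{lem:Hessian>0} applied with this $r$ yields $H_i=\nabla^2 L_i(\hat B^T x_i)\succeq c_* I_K$ for some positive constant $c_*$ depending only on $(K,\tau,\alpha)$.

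Combining the two bounds: dropping the (PSD) terms with $i\notin I$ and using the monotonicity of the Kronecker product with respect to the PSD order ($A\succeq B\succeq 0$ and $C\succeq 0$ imply $A\otimes C\succeq B\otimes C$), we obtain
\[
\frac1n\sum_{i=1}^n H_i\otimes (x_ix_i^T)
\succeq \frac1n\sum_{i\in I} H_i\otimes (x_ix_i^T)
\succeq \frac{c_*}{n}\sum_{i\in I} I_K\otimes (x_ix_i^T)
=c_*\, I_K\otimes\Bigl(\tfrac{X^T P_I X}{n}\Bigr),
\]
where $P_I=\sum_{i\in I}e_ie_i^T$. Since $I\in\mathcal I$ and $X\in U_x$, the definition \eqref{U_x} gives $n^{-1}X^T P_I X\succeq\phi_*^2\Sigma$, and once more using Kronecker monotonicity we conclude
\[
\frac1n\sum_{i=1}^n H_i\otimes(x_ix_i^T)\succeq c_*\phi_*^2\,(I_K\otimes\Sigma),
\]
which is the desired inequality with $c_1=c_*\phi_*^2$, a constant depending only on $(K,\tau,\alpha,\phi_*)$. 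The only mildly delicate step is matching the cardinality bound from Markov to the exact threshold $\lceil n(1-\alpha)\rceil$ used in $\mathcal I$, which is handled by the small slack $\alpha'<\alpha$ above; everything else is a direct application of \Cref{lem:Hessian>0} and the definition of $U_x$.
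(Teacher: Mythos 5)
Your proposal is correct and follows essentially the same route as the paper's proof: Markov's inequality on $\|X\hat B Q^T\|_F^2\le n\tau$ to extract a good index set $I$ with $|I|\ge(1-\alpha)n$, \Cref{lem:Hessian>0} to get $H_i\succeq c_* I_K$ on $I$, and then Kronecker monotonicity together with the spectral lower bound in the definition of $U_x$. Your handling of the cardinality matching (and the correct exponent $\phi_*^2$) is if anything slightly more careful than the paper's.
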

    \begin{proof}[Proof of \Cref{lem:HX}]
    If $(Y, X) \in U$, 
    we have $\|X\hat B Q^T\|_F^2 \le n\tau$. 
    Define the index set 
    $$
    I = \{i\in [n]: \|Q \hat B^T x_i\|\le \tfrac{\tau}{\alpha}\},
    $$
    then we have $|I| \ge (1-\alpha)n$ by Markov's inequality. 
    Therefore, for any $i\in I$, 
    $\|Q \hat B^T x_i\|_{\infty} 
    \le \tfrac{\tau}{\alpha}$.
    Applying \Cref{lem:Hessian>0} with $u= \hat B^T x_i$ and $r=\tfrac{\tau}{\alpha}$, we have for any $i\in I$, 
    $
    H_i = \nabla^2 L_i(\hat B^T x_i) 
    \succeq c_* I_K
    $
    for some positive constant $c_*$ depending on $(K, \tau, \alpha)$ only. 
    Therefore, if $(Y, X)\in U$ and $X\in U_x$, we have 
    \begin{align*}
        \frac 1n \sum_{i=1}^n H_i \otimes (x_i x_i^T) 
        \succeq~ & \frac 1n\sum_{i\in I} H_i \otimes (x_i x_i^T)\\
        \succeq~ & c_* \frac 1n\sum_{i\in I} I_K \otimes (x_i x_i^T)\\
        =~ & c_* (I_K\otimes \tfrac{X^T P_I X}{n})\\
        \succeq~ & 
        c_* \phi_* (I_K\otimes \Sigma)\\
        =~& c_1 (I_K\otimes \Sigma),
    \end{align*}
    where $P_I = \sum_{i\in I} e_i e_i^T$ and 
    $c_1$ is a positive constant depending on 
    $(K,\tau, \alpha, \phi_*)$ only. 
    \end{proof}

\subsection{Proof of \Cref{thm:normal}}
The proof of \Cref{thm:normal} is a direct consequence of 
\Cref{thm:general_Sigma} by noting 
$$ \sqrt{n}\Omega_{jj}^{-1/2} \Bigl(\frac 1n\sum_{i=1}^n g_i g_i^T\Bigr)^{-1/2} \Bigl(\frac1n\sum_{i=1}^n V_i\Bigr) \hat B^T e_j 
= \Omega_{jj}^{-1/2} (G^TG)^{-1/2}V\hat B^Te_j,$$
which is a consequence of the identities 
$G=\sum_{i=1}^n e_i g_i^T$ and $V = \sum_{i=1}^n V_i$. 

It thus remains to verify the conditions (1) and (2) in \Cref{thm:general_Sigma} from the assumptions in \Cref{thm:normal}. 

Applying \Cref{lem:XtoG-Lipschitz} with $\alpha$ chosen as $1- \delta^{-1/2}$, we have for $\{(Y, X), (Y, \tilde X)\} \subset U^*$, 
$$
\|G(Y, X) - G(Y, \tilde X)\|_F 
\le L \|(X - \tilde X)\Sigma^{-1/2}\|_F, 
$$
where $L$ is a positive constant depending on $(K, \gamma, \tau,\delta)$ only. 

Apply \Cref{lem:HX} with the same $\alpha = 1- \delta^{-1/2}$, we have for $(Y, X) \in U^*$,
$$\frac 1n \sum_{i=1}^n H_i \otimes (x_i x_i^T) \succeq c_* 
(I_K \otimes \Sigma), $$
where $c_*$ is a positive constant depending on $(K, \tau, \delta)$ only. 

Applying \Cref{lem:gradient>0} with $M = X\hat B$, we have for $(Y, X) \in U^*$, 
\begin{equation*} 
    m_* I_K \preceq 
    n^{-1} \sum_{i=1}^n  
    \nabla L_i (M^T e_i)
    \nabla L_i (M^T e_i)^T
    \preceq K I_K,
\end{equation*}
    where $m_*$ is a positive constant depending on $(K,\gamma, \tau)$ only. 
Therefore, the conditions (1) and (2) in \Cref{thm:general_Sigma} hold when the multinomial logistic loss is used. 
This completes the proof of \Cref{thm:normal}. 

\section{Other proof}
\subsection{Proof of \Cref{eq:classical-A} (Classical asymptotic theory with fixed $p$)}
Here we provide a derivation of the  asymptotic distribution of MLE under classical setting, where $p$ is fixed and $n$ tends to infinity. 

We first calculate the Fisher information matrix of the multinomial logistic log-odds model \eqref{eq:log-odds} with covariate $x\sim N(\bm{0}, \Sigma)$ and response $\y\in \{0, 1\}^{K+1}$ one-hot encoded satisfying $\sum_{k=1}^{K+1} \y_k = 1$. 
Note that the model \eqref{eq:log-odds} can be rewritten as 
\begin{align*}
    \P(\y_k=1|x) &= \frac{\exp(x^TA^*e_k)}{1 + \sum_{k'=1}^{K} \exp(x^TA^*e_{k'})}, \quad \forall k\in\{1, ..., K\}\\
    \P(\y_{K+1}=1|x) &= \frac{1}{1 + \sum_{k'=1}^{K} \exp(x^TA^*e_{k'})}. 
\end{align*}
The likelihood function of a parameter $A\in \R^{p\times K}$ is 
\begin{align*}
    L(A) 
    &= \prod_{k=1}^K \Bigl[\frac{\exp(x^TAe_k)}{1 + \sum_{k'=1}^{K} \exp(x^TAe_{k'})} \Bigr]^{\y_k} 
    \Bigl[\frac{1}{1 + \sum_{k'=1}^{K} \exp(x^TAe_{k'})}\Bigr]^{\y_{K+1}} \\
    &= \prod_{k=1}^K [\exp(x^TAe_{k})]^{\y_k} \frac{1}{1 + \sum_{k'=1}^{K} \exp(x^TAe_{k'} )},
\end{align*}
where we used 
$\sum_{k=1}^{K+1}\y_k = 1$. 
Thus, the log-likelihood function is 
\begin{align*}
    \ell(A) 
    &= \sum_{k=1}^K \y_k x^TAe_{k} - \log\bigl[1 + \sum_{k'=1}^{K} \exp(x^TAe_{k'})\bigr]. 
\end{align*}
It is more convenient to calculate the Fisher information matrix on the vector space $\R^{pK}$ instead of the matrix space $\R^{p\times K}$. 
To this end, let $\theta = \vect(A^T)$, then 
$x^T A e_k = e_k^T A^T x = (x^T \otimes e_k^T)\vect(A^T) = (x^T \otimes e_k^T)\theta$, and the log-likelihood function parameterized by $\theta$ is 
$$\ell(\theta) = \sum_{k=1}^K y_k (x^T \otimes e_k^T) \theta - \log[1 + \sum_{k'=1}^{K} \exp((x^T \otimes e_{k'}^T) \theta)]. $$
By multivariate calculus, we obtain the Fisher information matrix evaluated at $\theta^* = \vect(A^*{}^T)$, 
\begin{align*}
    \calI(\theta^*) 
    &= -\E \Bigl[\frac{\partial }{\partial \theta} \frac{\partial \ell(\theta)}{\partial \theta^T}\Bigr]\Big|_{\theta = \theta^*} \\
    &= \E [(xx^T) \otimes (\diag(\pi^*) - \pi^* \pi^*{}^T)],
\end{align*}
where $\pi^*\in \R^{K}$ with $k$-th entry
$\pi_k^* = \frac{\exp(x^TA^*e_k)}{1 + \sum_{k'=1}^{K} \exp(x^TA^*e_{k'})}$ for each $k\in[K]$. 

From classical maximum likelihood theory, for instance \cite[Chapter 5]{van1998asymptotic}, we have 
\begin{align*}
    \sqrt{n} (\hat \theta - \theta^*) \limd N(\bm {0}, \calI_{\theta^*}^{-1}),
\end{align*}
where $\hat \theta = \vect(\hat A^T)$ and $\hat A$ is the MLE of $A^*$. 
Furthermore, if the $j$-th covariate is independent of the response, we know $e_j^T A^*= \bm{0}^T$, then 
\begin{align*}
    \sqrt{n}\hat A^Te_j 
    = \sqrt{n} (\hat A^Te_j - A^*{}^Te_j)
    = \sqrt{n} (e_j^T \otimes I_K) (\hat\theta - \theta^*)
    \limd N(\bm{0}, S_j),
\end{align*}
where 
\begin{align*}
S_j 
&= (e_j^T \otimes I_K) \calI_{\theta^*}^{-1} (e_j\otimes I_K) \\
&= e_j^T \cov(x)^{-1} e_j [\E (\diag(\pi^*) - \pi^* \pi^*{}^T)]^{-1}
\end{align*}
holds by the independence between the $j$-th covariate and the response under $H_0$ in \eqref{H0}. 
This completes the proof of \Cref{eq:classical-A}. 

\end{document}